\title{Trace maps for Mackey algebras.}
\author{Baptiste Rognerud}
\begin{document}
\maketitle
\theoremstyle{plain}
\newtheorem{theo}{Theorem}[section]
\newtheorem{theox}{Theorem}[section]
\renewcommand{\thetheox}{\Alph{theox}}
\newtheorem{prop}[theo]{Proposition}
\newtheorem{lemme}[theo]{Lemma}
\newtheorem{coro}[theo]{Corollary}
\newtheorem{question}[theo]{Question}
\newtheorem{notation}[theo]{Notations}
\theoremstyle{definition}
\newtheorem{de}[theo]{Definition}
\newtheorem{dex}[theox]{Definition}
\theoremstyle{remark}
\newtheorem{ex}[theo]{Example}
\newtheorem{re}[theo]{Remark}
\newtheorem{res}[theo]{Remarks}
\renewcommand{\labelitemi}{$\bullet$}
\newcommand{\comu}{co\mu}
\newcommand{\mo}{\ensuremath{\hbox{mod}}}
\newcommand{\Mo}{\ensuremath{\hbox{Mod}}}
\newcommand{\decale}[1]{\raisebox{-2ex}{$#1$}}
\newcommand{\decaleb}[2]{\raisebox{#1}{$#2$}}
\begin{abstract}Let $G$ be a finite group and $R$ be a commutative ring. The Mackey algebra $\mu_{R}(G)$ shares a lot of properties with the group algebra $RG$ however, there are some differences. For example, the group algebra is a symmetric algebra and this is not always the case for the Mackey algebra. In this paper we present a systematic approach to the question of the symmetry of the Mackey algebra, by producing symmetric associative bilinear forms for the Mackey algebra. 
\newline Using the fact that the category of Mackey functors is a closed symmetric monoidal category, we prove that the Mackey algebra $\mu_{R}(G)$ is a symmetric algebra if and only if the family of Burnside algebras $(RB(H))_{H\leqslant G}$ is a family of  symmetric algebras with a compatibility condition.
\newline As a corollary, we recover the well known fact that over a field of characteristic zero, the Mackey algebra is always symmetric. Over the ring of integers the Mackey algebra of $G$ is symmetric if and only if the order of $G$ is square free. Finally, if $(K,\mathcal{O},k)$ is a $p$-module system for $G$, we show that the Mackey algebras $\mu_{\mathcal{O}}(G)$ and $\mu_{k}(G)$ are symmetric if and only if the Sylow $p$-subgroups of $G$ are of order $1$ or $p$. 
\end{abstract}
\par\noindent
{\it{\footnotesize   Key words: Finite group. Mackey functor. Symmetric Algebra. Symmetric monoidal category. \newline Burnside Ring.}}
\par\noindent
{\it {\footnotesize A.M.S. subject classification: 19A22, 20C05, 18D10,16W99.}}
\section{Trace maps for Mackey algebras.}
\subsection{Introduction.}
Let $R$ be a unital commutative ring and $G$ be a finite group.
The notion of Mackey functor was introduced by Green in $1971$. For him a Mackey functor is an axiomatisation of the comportment of the representations of a finite group. There are now several possible definitions of Mackey functors, in this paper we use the point of view of Dress who defined the Mackey functors as particular bivariant functors and we use the Mackey algebra introduced by Th\'evenaz and Webb. In \cite{tw} they proved that a Mackey functor is nothing but a module over the so-called Mackey algebra. Numerous properties of this algebra are known: this algebra shares a lot of properties with the group algebra. For example, the Mackey algebra is a free $R$-module, and its $R$-rank doesn't depend on the ring $R$. If we work with a $p$-modular system which is ``large enough'', there is a decomposition theory, in particular the Cartan matrix of this algebra is symmetric. However there are some differences, over a field of characteristic $p>0$, where $p\mid |G|$, the determinant of the Cartan matrix is not a power of the prime number $p$ in general, and as shown in $\cite{tw}$ the Mackey algebra is seldom a self-injective algebra. One may wonder about a stronger property for the Mackey algebra: when is the Mackey algebra a symmetric algebra? The answer to this question depends on the ring $R$.  
\newline When $R$ is a field of characteristic $0$ or coprime to $|G|$, the Mackey algebra is semi-simple (see \cite{tw_simple}), so it is clearly a symmetric algebra. Over a field of characteristic $p>0$ which is ``\emph{large enough}", where $p\mid |G|$, then Jacques Th\'evenaz and Peter Webb proved that the so called $p$-local Mackey algebra (see \cite{bouc_resolution}) is self-injective if and only if the Sylow $p$-subgroups of $G$ are of order $p$. However, in the same article, they proved that the $p$-local Mackey algebra is a product of matrix algebras and Brauer tree algebras. Since a Brauer tree algebra is derived equivalent to a symmetric Nakayama algebra, then by \cite{rickard_derived} or, for a more general result \cite{zimmermann_tilted_orders}, all Brauer tree algebras are symmetric algebras. So the $p$-local Mackey algebra over a field of characteristic $p$ is symmetric if and only if the Sylow $p$-subgroups are of order $1$ or $p$. Now the Mackey algebra of the group $G$ is Morita equivalent to a direct product of $p$-local Mackey algebras for some sub-quotients of the group $G$ (Theorem 10.1 \cite{tw}), so if $p^2 \nmid |G|$, the Mackey algebra of $G$ is symmetric. However, if $(K,\mathcal{O},k)$ is a $p$-modular system for the group $G$, it is not so clear that the previous argument can be use for the valuation ring $\mathcal{O}$. In particular the Mackey algebras over the valuation rings are rather complicate objects (see Section $6.3$ of \cite{these}).
\newline An $R$-algebra is a symmetric algebra if it is a projective $R$-module and if there exist a non degenerate symmetric, associative bilinear form on this algebra. One may think that the previous argument for the symmetry of the Mackey algebra is somewhat elaborate for something as elementary as the existence of a bilinear form on this algebra. However, for the Mackey algebra it is not obvious to specify such a bilinear form even in the semi-simple case.
\newline In this paper we propose a systematic approach to this question: by using the so-called Burnside Trace, introduce by Serge Bouc (\cite{bouc_burnside_dim}), we reduce the question of the existence of such bilinear a form on the Mackey algebra to the question of the existence of a family of symmetric, associative, non degenerate bilinear forms on Burnside algebras with an extra property. Here we denote by $RB(H)$ the usual Burnside algebra of the group $H$. 
\begin{dex} Let $G$ be a finite group and $R$ be a commutative ring. Let  $\phi=(\phi_{H})_{H\leqslant G}$ be a family of linear maps such that $\phi_{H}$ is a linear form on $RB(H)$. Let $b_{\phi_{H}}$ be the bilinear form on $RB(H)$ defined by $b_{\phi_{H}}(X,Y):= \phi_{H}(XY)$ for $X,Y\in RB(H)$. 
\begin{enumerate}
\item The family $\phi$ is stable under induction if for every $H$ subgroup of $G$ and finite $H$-set $X$ we have $\phi_{G}(Ind_{H}^{G}(X)) = \phi_{H}(X)$.
\item The family $\big(RB(H)\big)_{H\leqslant G}$ is a stable by induction family of symmetric algebras if there exist a stable by induction family of linear forms $\phi=(\phi)_{H\leqslant G}$ such that the bilinear form $b_{\phi_{H}}$ on $RB(H)$ is non-degenerate for all $H\leqslant G$.
\end{enumerate}
\end{dex}
The main result of the paper is the following theorem:
\begin{theox}
Let $G$ be a finite group and $R$ be a commutative ring. Then the Mackey algebra $\mu_{R}(G)$ is a symmetric algebra if and only if $\big(RB(H)\big)_{H\leqslant G}$ is a stable by induction family of symmetric algebras.
\end{theox}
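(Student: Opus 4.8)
The plan is to set up a dictionary between $R$-linear maps $\lambda\colon\mu_{R}(G)\to R$ with $\lambda(xy)=\lambda(yx)$ and stable by induction families $\phi=(\phi_{H})_{H\leqslant G}$, using the idempotent block decomposition of $\mu_{R}(G)$ and the Burnside trace of Bouc, and then to track non-degeneracy through this dictionary. Throughout, one only has to produce the bilinear forms, since $\mu_{R}(G)$ and each $RB(H)$ are free $R$-modules; and a form of the shape $b_{\lambda}(x,y)=\lambda(xy)$ is automatically associative and is symmetric exactly when $\lambda$ vanishes on commutators. Write $1=\sum_{H\leqslant G}\mathit{id}_{H}$ for the decomposition of the unit of $\mu_{R}(G)$ into its standard orthogonal idempotents, so that $\mathit{id}_{H}\,\mu_{R}(G)\,\mathit{id}_{K}=\mathrm{Hom}_{\Omega_{R}(G)}(G/K,G/H)$ in the Burnside (span) category $\Omega_{R}(G)$, hence $\mathit{id}_{H}\,\mu_{R}(G)\,\mathit{id}_{H}=\mathrm{End}_{\Omega_{R}(G)}(G/H)$ and $\mathit{id}_{G}\,\mu_{R}(G)\,\mathit{id}_{H}=\mathrm{Hom}_{\Omega_{R}(G)}(G/H,G/G)$. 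A first, purely formal, observation is that any $\lambda$ with $\lambda(xy)=\lambda(yx)$ vanishes on $\mathit{id}_{H}\,\mu_{R}(G)\,\mathit{id}_{K}$ when $H\neq K$ (evaluate $\lambda$ on $x=\mathit{id}_{H}\,x\,\mathit{id}_{K}$ and slide an idempotent across), so $\lambda=\sum_{H}\lambda_{H}$ with $\lambda_{H}$ a trace on $\mathrm{End}_{\Omega_{R}(G)}(G/H)$, subject to the compatibility $\lambda_{H}(ab)=\lambda_{K}(ba)$ for $a\in\mathit{id}_{H}\,\mu_{R}(G)\,\mathit{id}_{K}$ and $b\in\mathit{id}_{K}\,\mu_{R}(G)\,\mathit{id}_{H}$. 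The device linearising this is Bouc's Burnside trace \cite{bouc_burnside_dim}: the $R$-linear map $\mathrm{br}_{H}\colon\mathrm{End}_{\Omega_{R}(G)}(G/H)\to RB(H)$ obtained by pulling a span $G/H\leftarrow W\to G/H$ back along the diagonal of $G/H\times G/H$, equivalently the projection onto the $g=1$ summand in $\mathrm{End}_{\Omega_{R}(G)}(G/H)\cong\bigoplus_{g\in H\backslash G/H}RB(H\cap{}^{g}H)$. It is split by the structural ring homomorphism $s_{H}\colon RB(H)\to\mathrm{End}_{\Omega_{R}(G)}(G/H)$ given by applying $Ind_{H}^{G}$ to endomorphisms of the point of $\Omega_{R}(H)$, and the categorical trace $\mathrm{tr}_{G}\colon\mathrm{End}_{\Omega_{R}(G)}(G/H)\to\mathrm{End}_{\Omega_{R}(G)}(G/G)=RB(G)$ of the closed symmetric monoidal category of Mackey functors factors as $\mathrm{tr}_{G}=Ind_{H}^{G}\circ\mathrm{br}_{H}$ and is cyclic.

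For the implication ``$\Leftarrow$'' I would start from a stable by induction family $\phi$ with every $b_{\phi_{H}}$ non-degenerate, and set $\lambda:=\sum_{H}\phi_{H}\circ\mathrm{br}_{H}$, extended by zero on the off-diagonal blocks. Symmetry of $b_{\lambda}$ is then immediate: for $a\in\mathit{id}_{H}\,\mu_{R}(G)\,\mathit{id}_{K}$ and $b\in\mathit{id}_{K}\,\mu_{R}(G)\,\mathit{id}_{H}$, stability by induction followed by cyclicity of $\mathrm{tr}_{G}$ gives $\phi_{H}(\mathrm{br}_{H}(ab))=\phi_{G}(\mathrm{tr}_{G}(ab))=\phi_{G}(\mathrm{tr}_{G}(ba))=\phi_{K}(\mathrm{br}_{K}(ba))$, and the cases with mismatched idempotents are vacuous because the products vanish. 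For non-degeneracy one must show the induced map $\mu_{R}(G)\to\mu_{R}(G)^{*}$ is an isomorphism; it respects the block decomposition and restricts, for each pair $(H,K)$, to a pairing between $\mathit{id}_{H}\,\mu_{R}(G)\,\mathit{id}_{K}$ and $\mathit{id}_{K}\,\mu_{R}(G)\,\mathit{id}_{H}$. Using the double coset decompositions $\mathit{id}_{H}\,\mu_{R}(G)\,\mathit{id}_{K}\cong\bigoplus_{g}RB(H\cap{}^{g}K)$ and, after conjugating, $\mathit{id}_{K}\,\mu_{R}(G)\,\mathit{id}_{H}\cong\bigoplus_{g}RB(H\cap{}^{g}K)$, I would compute this pairing from the Mackey product formula together with the definition of $\mathrm{br}_{H}$, and show it is block-triangular along the double cosets with the $g$-th diagonal block non-degenerate whenever $b_{\phi_{H\cap{}^{g}K}}$ is; non-degeneracy of all the $b_{\phi_{L}}$ ($L\leqslant G$) then forces $b_{\lambda}$ to be non-degenerate, so $\mu_{R}(G)$ is symmetric.

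For the implication ``$\Rightarrow$'', let $\lambda=\sum_{H}\lambda_{H}$ be a symmetrizing form on $\mu_{R}(G)$. I would set $\phi_{G}:=\lambda_{G}$, viewed as a linear form on $\mathrm{End}_{\Omega_{R}(G)}(G/G)=RB(G)$, and $\phi_{H}:=\phi_{G}\circ Ind_{H}^{G}$; stability by induction then holds by transitivity of induction, essentially by construction. The non-degeneracy of $b_{\phi_{H}}$ is read off from the block of $b_{\lambda}$ pairing $\mathit{id}_{G}\,\mu_{R}(G)\,\mathit{id}_{H}$ with $\mathit{id}_{H}\,\mu_{R}(G)\,\mathit{id}_{G}$: identifying $\mathit{id}_{G}\,\mu_{R}(G)\,\mathit{id}_{H}=\mathrm{Hom}_{\Omega_{R}(G)}(G/H,G/G)\cong RB(H)$ and $\mathit{id}_{H}\,\mu_{R}(G)\,\mathit{id}_{G}\cong RB(H)$ through the slice equivalence $\mathsf{Set}^{G}/(G/H)\simeq\mathsf{Set}^{H}$, one checks with the Mackey formula for the corresponding composite of spans that this block \emph{is} $b_{\phi_{H}}$ (up to transposing the two arguments, which is harmless as $RB(H)$ is commutative). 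Since $b_{\lambda}$ is non-degenerate, each of its blocks is a perfect pairing, so $b_{\phi_{H}}$ is non-degenerate for every $H\leqslant G$, i.e. $\big(RB(H)\big)_{H\leqslant G}$ is a stable by induction family of symmetric algebras.

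The step I expect to be the main obstacle is the non-degeneracy bookkeeping: carrying out the Mackey product computations that show how $b_{\lambda}$, in both constructions, decomposes along the double coset gradings of the morphism spaces of $\Omega_{R}(G)$ into pieces governed by the Burnside ring forms $b_{\phi_{L}}$ — in the ``$\Leftarrow$'' direction the block-triangularity and the identification of the diagonal blocks is the delicate part. A second point to pin down carefully, and the place where the closed symmetric monoidal structure is genuinely used, is that Bouc's Burnside trace is the categorical trace of the category of Mackey functors, that it is cyclic, and that it satisfies $\mathrm{tr}_{G}=Ind_{H}^{G}\circ\mathrm{br}_{H}$; this is precisely what turns the ``trace valued in $R$'' problem for $\mu_{R}(G)$ into the analogous problem for the family of Burnside algebras together with the induction compatibility.
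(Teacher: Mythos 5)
Your proposal is correct and follows essentially the same route as the paper: your form $\lambda=\sum_{H}\phi_{H}\circ\mathrm{br}_{H}$ coincides, via stability by induction, with the paper's $\phi_{G}\circ Btr$, and the double-coset bookkeeping you flag as the main obstacle is exactly Lemmas \ref{bl}--\ref{red2} (where the non-zero blocks are matched by the permutation $HxL\mapsto Hx^{-1}L$ rather than arranged triangularly, and each is identified with the matrix of $b_{\phi_{L\cap H^{x}}}$). For the converse the paper packages the same information differently, extracting the family from the self-duality $RB\cong \mathrm{Hom}_{R}(RB,R)$ of the projective Burnside functor together with Frobenius reciprocity, which is precisely the perfect pairing you read off the $(t^{G}_{G},t^{H}_{H})$-blocks of the symmetrizing form.
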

As corollary, we produce various symmetric associative bilinear form on the Mackey algebra which generalize the usual bilinear form for the group algebra. Using these forms we give direct and elementary proof for the symmetry of the Mackey algebras in the following cases: 
\begin{itemize}
\item Over the ring of the integers $\mathbb{Z}$, the Mackey algebra of a finite group $G$ is symmetric if and only if the order of $G$ is square-free.
\item Over a field $k$ of characteristic $0$, the Mackey algebra of $G$ is symmetric. 
\item Over a field $k$ of characteristic $p>0$, the Mackey algebra of $G$ is symmetric if and only if $p^2\nmid |G|$.
\item Let $p$ be a prime number such that $p\mid |G|$. Let $R$ be a ring in which all prime divisors of $|G|$, except $p$, are invertible. Then the Mackey algebra $\mu_{R}(G)$ is symmetric if and only if $p^2 \nmid |G|$. In particular, if $(K,\mathcal{O},k)$ is a $p$-modular system for $G$, then the Mackey algebras $\mu_{k}(G)$ and $\mu_{\mathcal{O}}(G)$ are symmetric if and only of $p^2 \nmid |G|$.
\end{itemize}
\begin{notation}
We use the following notations:
\begin{itemize}
\item Let $G$ be a finite group. Then $[s(G)]$ denotes a set of representatives of the conjugacy classes of subgroups of $G$.
\item Let $X$ be a finite $G$-set. We still denote by $X$ the isomorphism class of $X$ in the Burnside ring $B(G)$. 
\item All the $G$-sets are supposed to be finite.
\item Let $p$ be a prime number. Then $O^{p}(G)$ is the smallest normal subgroup of $G$ such that $G/O^{p}(G)$ is a $p$-group. A finite group $G$ is $p$-perfect if $O^{p}(G)=G$. 
\item Let $H$ and $K$ be two subgroups of $G$. We use the notation $H=_{G} K$ if $H$ and $K$ are conjugate in $G$.
\end{itemize}
\end{notation}
\subsection{Symmetric algebras}
Let $R$ be a commutative ring with unit. 
\begin{de}[Definition $2.3$ \cite{broue_higman}]
Let $A$ be an $R$-algebra. Then $A$ is a symmetric algebra if:
\begin{enumerate}
\item $A$ is a finitely generated projective $R$-module.
\item There exist a non degenerate, associative, symmetric bilinear form $b$ on $A$. That is a bilinear form $b$ such that:
\begin{itemize}
\item for $x$, $y$, $z\in A$ we have $b(xy,z)=b(x,yz)$.
\item For $x$ and $y$ in $A$, we have $b(x,y)=b(y,x)$.
\item The map from $A$ to $Hom_{R}(A,R)$ defined by $x\mapsto b(x,-)$ is an isomorphism of $R$-modules. 
\end{itemize}
\end{enumerate}
\end{de}
\begin{re}
Let $A$ be an $R$-algebra which is a finitely generated projective $R$-module. Then $A$ is a symmetric algebra if and only if $A$ is isomorphic to $Hom_{R}(A,R)$ as $A$-$A$-bimodule.
\end{re}
We have the following elementary result:
\begin{lemme}
Let $A$ be an $R$-algebra which is free of finite rank over $R$. Let $b$ be a bilinear form on $A$. Let $e:=(e_{1},\cdots,e_{n})$ be an $R$-basis of $A$.  Then $b$ is non-degenerate if and only if the matrix of $b$ in the basis $e$ is invertible. 
\end{lemme}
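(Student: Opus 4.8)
The plan is to identify the ``structural map'' $\beta\colon A \to Hom_{R}(A,R)$, $x\mapsto b(x,-)$, whose bijectivity is exactly the non-degeneracy condition of the preceding definition, with a matrix multiplication, and then to invoke the elementary fact that a square matrix over a commutative ring represents an isomorphism of finite free modules precisely when it is invertible. Note that the symmetry of $b$ plays no role here, only bilinearity and finite freeness.

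First I would record that, since $A$ is free of rank $n$ over $R$ with basis $e=(e_{1},\dots,e_{n})$, the dual module $Hom_{R}(A,R)$ is again free of rank $n$, a basis being the dual basis $e^{*}=(e_{1}^{*},\dots,e_{n}^{*})$ determined by $e_{i}^{*}(e_{j})=\delta_{ij}$. Next I would compute the matrix of the $R$-linear map $\beta$ with respect to $e$ on the source and $e^{*}$ on the target: expanding the functional $b(e_{i},-)$ in the dual basis gives $b(e_{i},-)=\sum_{j=1}^{n} b(e_{i},e_{j})\,e_{j}^{*}$, so the $(j,i)$-entry of this matrix is $b(e_{i},e_{j})$; that is, the matrix of $\beta$ is the transpose $M^{\mathsf{t}}$ of the matrix $M=\big(b(e_{i},e_{j})\big)_{i,j}$ of $b$ in the basis $e$.

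It then remains to note that an $R$-linear map between free $R$-modules of the same finite rank is an isomorphism if and only if the matrix representing it in any pair of bases is invertible over $R$: a choice of bases turns $R$-linear maps $R^{n}\to R^{n}$ into $n\times n$ matrices and composition into matrix multiplication, so a two-sided inverse of $\beta$ corresponds to a two-sided inverse of $M^{\mathsf{t}}$, and conversely. Since a square matrix over the commutative ring $R$ is invertible if and only if its transpose is (indeed $(M^{\mathsf{t}})^{-1}=(M^{-1})^{\mathsf{t}}$), we conclude that $\beta$ is an isomorphism, i.e. $b$ is non-degenerate, exactly when $M$ is invertible. There is no genuine obstacle in the argument; the only point deserving a word of care is that over a general commutative ring ``invertible'' must be read as ``invertible as a matrix over $R$'' (equivalently, of unit determinant), and that the correspondence between isomorphisms of finite free modules and invertible matrices is valid over any commutative ring.
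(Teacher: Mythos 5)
Your argument is correct and is exactly the standard one: the paper gives no proof of its own here, simply citing Lemma 2.2 of Milnor--Husem\"oller, and that reference's proof is the same identification of the adjoint map $x\mapsto b(x,-)$ with (the transpose of) the Gram matrix in the dual basis, followed by the fact that over a commutative ring a map of finite free modules of equal rank is an isomorphism precisely when its matrix is invertible (equivalently, has unit determinant). Nothing is missing.
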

\begin{proof}
Lemma $2.2$ \cite{symmetric_bilinear_forms}.
\end{proof}
\subsection{Mackey functors.}
There are several possible definitions for the notion of Mackey functor for $G$ over $R$. In this paper we use two of them.  The first definition is due to Dress in \cite{dress}. 
\begin{de}\label{Dress}
A bivariant functor $M~=~(M^{*},M_{*})$ from $G$-$\rm{set}$ to $R$-$\rm{Mod}$ is a pair of functors from $G$-$\rm{set}\to$ $R$-$\rm{Mod}$ such that $M^{*}$ is a contravariant functor, and $M_{*}$ is a covariant functor. If $X$ is a $G$-set, then the image by the covariant and by the contravariant part coincide. We denote by $M(X)$ this image. A Mackey functor for $G$ over $R$ is a bivariant functor from $G$-set to $R$-Mod such that: 
\begin{itemize}
\item Let $X$ and $Y$ be two finite $G$-sets, $i_{X}$ and $i_{Y}$ the canonical injection of  $X$ (resp. $Y$) in $X\sqcup Y$, then $(M^{*}(i_X),M^{*}(i_Y))$ and $(M_{*}(i_X),M_{*}(i_Y))$ are inverse isomorphisms. 
\begin{equation*}
M(X)\oplus M(Y)\cong M(X\sqcup Y). 
\end{equation*}
\item If
\begin{equation*}
\xymatrix{
X\ar[r]^{a}\ar[d]^{b}& Y\ar[d]^{c} \\
Z\ar[r]^{d} & T 
}
\end{equation*}
is a pullback diagram of $G$-sets, then the diagram 
\begin{equation*}
\xymatrix{
M(X)\ar[d]_{M_{*}(b)} & M(Y)\ar[l]_{M^{*}(a)}\ar[d]^{M_{*}(c)}\\
M(Z) & M(T)\ar[l]_{M^{*}(d)}
}
\end{equation*}
is commutative. 
\end{itemize}
\end{de}
A morphism between two Mackey functors is a natural transformation of bivariant functors. Let us denote by $Mack_{R}(G)$ the category of Mackey functors for $G$ over $R$. Let us first recall an important example of Mackey functor: 
\begin{ex}\cite{bouc_green}\label{burnside}
If $X$ is a finite $G$-set, then the category of $G$-sets over $X$ is the category with objects $(Y,\phi)$ where $Y$ is a finite $G$-set and $\phi$ is a morphism from $Y$ to $X$. A morphism $f$ from $(Y,\phi)$ to $(Z,\psi)$ is a morphism of $G$-sets $f:Y\to Z$ such that $\psi\circ f=\phi$. 
\newline\indent The Burnside functor at $X$ is the Grothendieck group  of the category of $G$-sets over $X$, for relations given by disjoint union. This is a Mackey functor for $G$ over $R$ by extending scalars from $\mathbb{Z}$ to $R$. We denote by $RB$ the functor after scalar extension. 
\newline If $X$ is a $G$-set, the Burnside module $RB(X^2)$ has an $R$-algebra structure. The product of (the isomorphism classes of) $(X\overset{\alpha}{\leftarrow} Y \overset{\beta}{\rightarrow} X)$ and $(X\overset{\gamma}{\leftarrow} Z \overset{\delta}{\rightarrow} X)$ is given by (the isomorphism class of ) the pullback along $\beta$ and $\gamma$. 
\begin{equation*}
\xymatrix{
& & P\ar@{..>}[dr]\ar@{..>}[dl] & & \\
& Y\ar[dl]_{\alpha}\ar[dr]^{\beta} & & Z\ar[dl]_{\gamma}\ar[dr]^{\delta} & \\
X & & X & &X
}
\end{equation*}
The identity of this $R$-algebra is (the isomorphism class of )$\xymatrix{&X\ar@{=}[rd]\ar@{=}[dl]&\\X& &X }$
\end{ex}
\begin{re}
The usual Burnside algebra of a finite group $H$, previously denoted by $RB(H)$ is isomorphic to the Burnside functor evaluated at the $H$-set $H/H$. In the Mackey functors' langage the first notation correspond to Green's notation and the second one correspond to Dress' notation. In the rest of the paper the notation $RB(H)$ will always be used for the usual Burnside algebra of the group $H$. If we want to speak about the Burnside functor evaluated at the $H$-set $H/1$, we will write $RB(H/1)$. 
\end{re}
Another definition of Mackey functors was given by Th\'evenaz and Webb in \cite{tw}. 
\begin{de}
The Mackey  algebra $\mu_{R}(G)$ for $G$ over $R$ is the unital associative algebra with generators  $t_{H}^{K}$, $r^{K}_{H}$ and $c_{g,H}$ for $H\leqslant K\leqslant G$ and $g\in G$, with the following relations:
\begin{itemize}
\item $\sum_{H\leqslant G}t^{H}_{H}=1_{\mu_{R}(G)}$. 
\item $t^{H}_{H}=r^{H}_{H}=c_{h,H}$ for $H\leqslant G$ and $h\in H$. 
\item $t^{L}_{K}t_{H}^{K}=t^{L}_{H}$, $r^{K}_{H}r^{L}_{K}=r^{L}_{H}$ for $H\subseteq K\subseteq L$. 
\item $c_{g',{^{g}H}}c_{g,H}=c_{g'g,H}$, for $H\leqslant G$ and $g,g'\in G$. 
\item $t^{{^{g}K}}_{{^{g}H}}c_{g,H}=c_{g,K}t^{K}_{H}$ and $r^{{^{g}K}}_{{^{g}H}}c_{g,K}=c_{g,H}r^{K}_{H}$, $H\leqslant K$, $g\in G$. 
\item $r^{H}_{L}t^{H}_{K}=\sum_{h\in [L\backslash H / K]} t^{L}_{L\cap {^{h} K}} c_{h, L^{h} \cap H} r^{K}_{L^{h}\cap H}$ for $L\leqslant H \geqslant K$. 
\item All the other products of generators are zero. 
\end{itemize}
\end{de}
\begin{prop}\label{basis}
The Mackey algebra is a free $R$-module, of finite rank independent of $R$. The set of elements $t^{H}_{K}xr^{L}_{K^{x}}$, where $H$ and $L$ are subgroups of $G$, where $x\in [H\backslash G/L]$, and $K$ is a subgroup of $H{\cap~{\ ^{x}L}}$ up to $(H\cap {\ ^{x}L})$-conjugacy, is an $R$-basis of $\mu_{R}(G)$.  
\end{prop}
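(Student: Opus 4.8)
The plan is to establish first that the displayed elements span $\mu_R(G)$ over an arbitrary commutative ring $R$, and then that they are linearly independent with cardinality $N$ depending only on $G$; the second point I reduce to the cases $R=\mathbb Z$ and $R=\mathbb Q$.

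\textbf{Spanning.} The algebra $\mu_R(G)$ is $R$-spanned by words in the generators. Using $t^H_H=r^H_H=c_{h,H}$, the merging rule $c_{g',{}^gH}c_{g,H}=c_{g'g,H}$, and the intertwining relations $t^{{}^gK}_{{}^gH}c_{g,H}=c_{g,K}t^K_H$ and $r^{{}^gK}_{{}^gH}c_{g,K}=c_{g,H}r^K_H$, every word can be rewritten as an alternating product of "blocks", each block a transfer, a restriction, or a conjugation, with the conjugations absorbed into neighbouring transfers and restrictions or merged together. Consecutive transfers collapse by $t^L_Kt^K_H=t^L_H$ and consecutive restrictions by $r^K_Hr^L_K=r^L_H$, so the only remaining obstruction to a single block $t^H_Kc_{x,K^x}r^L_{K^x}$ is a subword "restriction then transfer". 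The Mackey relation $r^H_Lt^H_K=\sum_{h\in[L\backslash H/K]}t^L_{L\cap{}^hK}c_{h,L^h\cap H}r^K_{L^h\cap H}$ rewrites such a subword as a sum of single blocks; feeding this into an induction on the number of blocks shows that every word is an $R$-linear combination of single blocks. Finally, choosing $x$ in $[H\backslash G/L]$ and $K$ up to $(H\cap{}^xL)$-conjugacy (again moving between conjugates via the $c$--$t$ and $c$--$r$ relations) yields exactly the displayed list, call it $\mathcal B$, of size $N=N(G)$. Hence $\mu_R(G)$ is $R$-spanned by $\mathcal B$ for every $R$.

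\textbf{Independence and $R$-independence of the rank.} Since the presentation has integer coefficients, base change gives $\mu_R(G)\cong R\otimes_{\mathbb Z}\mu_{\mathbb Z}(G)$ as $R$-algebras, so it suffices to show $\mu_{\mathbb Z}(G)$ is free of rank $N$. By the spanning step there is a surjection $\mathbb Z^{\mathcal B}\twoheadrightarrow\mu_{\mathbb Z}(G)$; its kernel, a submodule of a free $\mathbb Z$-module, is free of rank $N-\dim_{\mathbb Q}\mu_{\mathbb Q}(G)$. So it is enough to prove $\dim_{\mathbb Q}\mu_{\mathbb Q}(G)\geq N$, i.e. that the $N$ elements of $\mathcal B$ are $\mathbb Q$-linearly independent; then the kernel is $0$, $\mu_{\mathbb Z}(G)$ is free of rank $N$, and $\mu_R(G)=R\otimes_{\mathbb Z}\mu_{\mathbb Z}(G)$ is free of rank $N$ for every $R$. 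For the independence over $\mathbb Q$ I would use that $\mu_{\mathbb Q}(G)$ is semisimple with simple modules $S_{H,V}$ parametrised by $H\in[s(G)]$ and $V$ a simple $\mathbb Q[N_G(H)/H]$-module, so $\dim_{\mathbb Q}\mu_{\mathbb Q}(G)=\sum_{H,V}(\dim_{\mathbb Q}S_{H,V})^2$, and then check this equals $N$ via the combinatorial description of the $\dim_{\mathbb Q}S_{H,V}(G/K)$; alternatively one argues directly that distinct elements of $\mathcal B$ act $\mathbb Q$-independently on a suitable family of Mackey functors, after noting that elements of $\mathcal B$ indexed by different pairs $(H,L)$ lie in the distinct summands $t^H_H\,\mu_{\mathbb Q}(G)\,t^L_L$ of the Peirce decomposition.

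\textbf{Main obstacle.} The spanning step is bookkeeping once the Mackey relation is available, and the reduction to $\mathbb Z$ and $\mathbb Q$ is formal. The genuine content is the lower bound $\dim_{\mathbb Q}\mu_{\mathbb Q}(G)\geq N$: ruling out any linear relation among the standard elements requires either the structure theory of the semisimple algebra $\mu_{\mathbb Q}(G)$ together with a nontrivial combinatorial identity, or an explicit construction of enough Mackey functors to separate them. This is precisely the point treated by Th\'evenaz and Webb in \cite{tw}, whose result we quote.
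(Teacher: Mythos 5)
Your proposal is correct in outline, and it is in substance an expansion of the proof the paper actually gives: the paper's ``proof'' of Proposition \ref{basis} is nothing but the citation of Section $3$ of \cite{tw}, and the two steps you describe --- rewriting every word in the generators into the normal form $t^{H}_{K}xr^{L}_{K^{x}}$ via the composition, conjugation and Mackey relations, then proving linear independence --- are exactly the content of that section. Your reduction of independence to $R=\mathbb{Z}$ by base change, and then to the inequality $\dim_{\mathbb{Q}}\mu_{\mathbb{Q}}(G)\geqslant N$, is sound; the one caveat is that your first suggested route, through semisimplicity of $\mu_{\mathbb{Q}}(G)$ and the identity $\sum_{H,V}(\dim S_{H,V})^{2}=N$, risks circularity, since the classification of simple Mackey functors is normally developed after the basis theorem. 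Your second suggestion (letting the elements act on enough Mackey functors) is the right one, and the cleanest implementation is the one the paper itself records as Proposition \ref{prop_b}: the algebra map $\beta:\mu_{\mathbb{Z}}(G)\to B(\Omega_{G}^{2})$ defined on generators sends your spanning set $\mathcal{B}$ bijectively onto the standard basis of the free $\mathbb{Z}$-module $B(\Omega_{G}^{2})$ given by isomorphism classes of transitive $G$-sets over $\Omega_{G}^{2}$ (these are classified, via Goursat, by pairs $(H,L)$, a double coset $HxL$, and a subgroup $K\leqslant H\cap{}^{x}L$ up to conjugacy), which yields independence over $\mathbb{Z}$, hence over any $R$, with no dimension count. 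Since you explicitly quote \cite{tw} for the one non-formal step, your argument is no less complete than the paper's.
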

\begin{proof}
Section $3$ of \cite{tw}. 
\end{proof}
\begin{prop}\label{prop_b}
The Mackey algebra $\mu_{R}(G)$ is isomorphic to $RB(\Omega_{G}^{2})$, where $\Omega_{G}$ is the $G$-set: $\sqcup_{L\leqslant G} G/L$. 
\end{prop}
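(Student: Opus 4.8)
The plan is to construct an explicit homomorphism of $R$-algebras $\Phi\colon \mu_{R}(G)\to RB(\Omega_{G}^{2})$ and to check that it sends the $R$-basis of Proposition~\ref{basis} onto the natural $R$-basis of $RB(\Omega_{G}^{2})$. The first step is to unfold the right-hand side. Since $\Omega_{G}^{2}=\bigsqcup_{H,L\leqslant G}G/H\times G/L$ and the Burnside functor sends disjoint unions to direct sums, $RB(\Omega_{G}^{2})=\bigoplus_{H,L\leqslant G}RB(G/H\times G/L)$; moreover, for the product of Example~\ref{burnside}, the identity span $e_{H}$ of each $G/H$ is an idempotent, one has $\sum_{H\leqslant G}e_{H}=1$, and the corner $e_{H}\,RB(\Omega_{G}^{2})\,e_{L}$ is identified with $RB(G/H\times G/L)$, the pullback of two spans whose two inner legs (the ones along which one pulls back) land in different components of $\Omega_{G}$ being empty. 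This mirrors the relations $\sum_{H}t^{H}_{H}=1$ and $t^{H}_{H}t^{L}_{L}=0$ for $H\neq L$ on the Mackey side. Finally, an $R$-basis of $RB(G/H\times G/L)$ is given by the isomorphism classes of transitive $G$-sets over $G/H\times G/L$: such an object is a $G/P$ together with a $G$-map determined by the image of the base point, which must lie in $(G/H\times G/L)^{P}$, and the standard orbit count identifies these with pairs $\big(x\in[H\backslash G/L]$, $P\leqslant H\cap{}^{x}L$ up to $(H\cap{}^{x}L)$-conjugacy$\big)$ --- exactly the index set of Proposition~\ref{basis}.

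Next I would define $\Phi$ on the generators $t^{K}_{H}$, $r^{H}_{K}$, $c_{g,H}$, sending each to the span of $G$-sets that names the corresponding elementary operation (the projection $G/H\to G/K$ with an identity leg, for $t^{K}_{H}$ and $r^{H}_{K}$, the legs being exchanged between the two; the conjugation isomorphism $G/H\xrightarrow{\sim}G/{}^{g}H$, for $c_{g,H}$), each read as an element of the appropriate summand $RB(G/?\times G/?)$. One then verifies the defining relations of $\mu_{R}(G)$ for these images. The idempotent relations, the transitivity relations $t^{L}_{K}t^{K}_{H}=t^{L}_{H}$ and $r^{K}_{H}r^{L}_{K}=r^{L}_{H}$, the conjugation relations, and the ``all other products are zero'' relations are straightforward computations of composites of spans, using that composing two $G$-maps, viewed as spans with an identity leg, gives the span of their composite. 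The one relation carrying real content is the Mackey relation $r^{H}_{L}t^{H}_{K}=\sum_{h\in[L\backslash H/K]}t^{L}_{L\cap{}^{h}K}\,c_{h,L^{h}\cap H}\,r^{K}_{L^{h}\cap H}$: on the Burnside side the left-hand side is the pullback of $G/L\to G/H\leftarrow G/K$, and the decomposition of that pullback into $G$-orbits is indexed precisely by the double cosets $L\backslash H/K$, the orbit attached to $h$ being the transitive $G$-set $G/(L\cap{}^{h}K)$ with its two evident legs, which is the span named by the summand $t^{L}_{L\cap{}^{h}K}\,c_{h,L^{h}\cap H}\,r^{K}_{L^{h}\cap H}$. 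This makes $\Phi$ a well-defined unital homomorphism of $R$-algebras.

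Then I would check that $\Phi$ is bijective. Evaluating $\Phi$ on a basis element $t^{H}_{K}x\,r^{L}_{K^{x}}$ of Proposition~\ref{basis} amounts to composing the three spans above, and this composite is the transitive $G$-set $G/K\to G/H\times G/L$, $gK\mapsto(gH,gxL)$ --- that is, the basis element of $RB(G/H\times G/L)$ attached to the pair $(x,K)$ in the parametrisation of the first paragraph. Hence $\Phi$ carries one $R$-basis bijectively onto another and is therefore an isomorphism of $R$-modules, so an isomorphism of $R$-algebras. (Alternatively, once $\Phi$ is known to be an algebra map, it is enough to observe that it is surjective and that both sides are free $R$-modules of the same finite rank, a surjection between free modules of equal finite rank over a commutative ring being an isomorphism.)

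The main obstacle I expect is purely organisational: fixing once and for all a variance convention so that composition of spans (pullback along the correct pair of legs, as dictated by Example~\ref{burnside}) matches multiplication in $\mu_{R}(G)$ read in the correct order, and then verifying that the conjugation twists $c_{h,L^{h}\cap H}$ occurring in the Mackey relation are reproduced by the canonical identifications of the orbit summands of the pullback of $G/L\to G/H\leftarrow G/K$ with the standard transitive $G$-sets $G/(L\cap{}^{h}K)$. Once this dictionary is in place, all the remaining verifications are routine.
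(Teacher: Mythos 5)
Your proposal is correct and takes essentially the same route as the paper: the paper defines exactly the same map on the generators $t^{K}_{H}$, $r^{K}_{H}$, $c_{g,H}$ as spans over the components of $\Omega_{G}$, and defers the verification to Proposition 4.5.1 of Bouc's book. Your write-up merely fills in the relation checks (Mackey formula as the orbit decomposition of a pullback) and the basis-to-basis comparison that the paper leaves to the reference.
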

\begin{proof}
The proof can be found in Proposition $4.5.1$ of \cite{bouc_green}. Let us recall that an explicit isomorphism $\beta$ can be defined on the generators of $\mu_{R}(G)$ by $\beta(t^{K}_{H}):=$
\begin{equation*}
\xymatrix{
& G/H\ar[dl]_{\pi_{H}^{K}}\ar@{=}[dr]& \\
G/K & & G/H
}
\end{equation*}
where $\pi_{H}^{K} : G/H\to G/K$ is the canonical map. \newline Similarly, we define $\beta(r^{K}_{H}):=$
\begin{equation*}
\xymatrix{
& G/H\ar[dr]^{\pi_{H}^{K}}\ar@{=}[dl]& \\
G/H & & G/K
}
\end{equation*}
and $\beta(c_{g,H}):=$
\begin{equation*}
\xymatrix{
& G/{^{g}H}\ar[dr]^{\gamma_{g,H}}\ar@{=}[dl]& \\
G/{^{g}H} & & G/H
}
\end{equation*}
where $\gamma_{g,H}(x{\ ^{g}H})=xgH$. One can check that this gives an isomorphism of algebras. 
\end{proof}
\begin{prop}[\cite{tw}]
There is an equivalence of categories \begin{center}$Mack_{R}(G)\cong \mu_{R}(G)$-Mod.\end{center}
\end{prop}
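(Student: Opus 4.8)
The plan is to build explicit quasi-inverse functors between $Mack_{R}(G)$ and $\mu_{R}(G)$-Mod, exploiting the fact that every finite $G$-set is, up to isomorphism, a disjoint union of transitive $G$-sets $G/H$ with $H\leqslant G$, so that the single $G$-set $\Omega_{G}=\sqcup_{L\leqslant G}G/L$ of Proposition \ref{prop_b} already ``sees'' every transitive $G$-set. The slick formulation of what follows is: for any $G$-set $\Omega$ whose transitive summands exhaust the isomorphism classes of transitive $G$-sets, evaluation $M\mapsto M(\Omega)$ is an equivalence $Mack_{R}(G)\xrightarrow{\ \sim\ }RB(\Omega^{2})$-Mod, and then one specialises to $\Omega=\Omega_{G}$ and invokes Proposition \ref{prop_b} to identify $RB(\Omega_{G}^{2})$ with $\mu_{R}(G)$.

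First I would construct $F\colon Mack_{R}(G)\to\mu_{R}(G)$-Mod. Given a Mackey functor $M=(M^{*},M_{*})$, set $F(M):=\bigoplus_{H\leqslant G}M(G/H)$, which by the additivity axiom is canonically $M(\Omega_{G})$. Let $\mu_{R}(G)\cong RB(\Omega_{G}^{2})$ act on $M(\Omega_{G})$ by the usual recipe: a span $(\Omega_{G}\xleftarrow{a}Y\xrightarrow{b}\Omega_{G})$ acts on $m$ by $M_{*}(b)\circ M^{*}(a)(m)$. Concretely, on generators this means $t_{H}^{K}$ acts as $M_{*}(\pi_{H}^{K})$ for the canonical projection $\pi_{H}^{K}\colon G/H\to G/K$, $r_{H}^{K}$ as $M^{*}(\pi_{H}^{K})$, and $c_{g,H}$ as $M_{*}(\gamma_{g,H})$ (and $0$ between incompatible summands). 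One then checks that the defining relations of $\mu_{R}(G)$ are satisfied: $\sum_{H}t_{H}^{H}=1$ and $t^H_H=r^H_H=c_{h,H}$ are the identity axioms for $M^{*},M_{*}$ restricted to the pieces; the transitivity relations for $t$ and $r$ and the cocycle relation for $c$ come from functoriality of $M_{*}$ and $M^{*}$; the relations $t^{{}^{g}K}_{{}^{g}H}c_{g,H}=c_{g,K}t^{K}_{H}$ and $r^{{}^{g}K}_{{}^{g}H}c_{g,K}=c_{g,H}r^{K}_{H}$ come from equalities of $G$-maps; and the Mackey relation $r^{H}_{L}t^{H}_{K}=\sum_{h\in[L\backslash H/K]}t^{L}_{L\cap{}^{h}K}c_{h,L^{h}\cap H}r^{K}_{L^{h}\cap H}$ is exactly the image under $M$, via the pullback axiom, of the decomposition of the pullback of $\pi^{H}_{L}$ and $\pi^{H}_{K}$ as $\sqcup_{h\in[L\backslash H/K]}L/(L\cap{}^{h}K)$. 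A morphism of Mackey functors restricts to a $\mu_{R}(G)$-linear map, so $F$ is a functor.

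In the other direction I would construct $F'\colon\mu_{R}(G)$-Mod$\to Mack_{R}(G)$. The $t_{H}^{H}$ are pairwise orthogonal idempotents of $\mu_{R}(G)$ with sum $1$, so any module $V$ splits as $V=\bigoplus_{H\leqslant G}t_{H}^{H}V$; set $M(G/H):=t_{H}^{H}V$ and extend by $M(\sqcup_{i}X_{i}):=\bigoplus_{i}M(X_{i})$ after choosing decompositions into transitive $G$-sets. Since every $G$-map between transitive $G$-sets factors as a canonical projection followed by a conjugation isomorphism (a $G$-map $G/H\to G/K$ sends $eH$ to some $gK$ with $H\leqslant{}^{g}K$), one defines $M_{*}$ on it as the corresponding composite of a $t$ and a $c$ acting on $V$, and $M^{*}$ as the corresponding composite of a $c$ and an $r$, and extends to all $G$-maps ``by matrices'' over the chosen transitive decompositions. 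The relations of $\mu_{R}(G)$ guarantee that these assignments are independent of the chosen factorisations and of the chosen decompositions, are functorial, and satisfy the two Mackey axioms — additivity being built in, and the pullback axiom being once more a reformulation of the double-coset relation. Finally one checks $F$ and $F'$ are mutually quasi-inverse: $F'F(M)(G/H)=t_{H}^{H}M(\Omega_{G})=M(G/H)$ naturally in $M$, and $FF'(V)=\bigoplus_{H}t_{H}^{H}V=V$ as $\mu_{R}(G)$-modules, compatibly with morphisms.

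The main obstacle is the bookkeeping inside the construction of $F'$: showing that defining $M^{*}$ and $M_{*}$ through the generators $t,r,c$ is well defined (independent of the way a $G$-map is written as projection-then-conjugation, and additive under disjoint unions) and that the resulting bivariant functor really does satisfy the pullback square axiom. Every such verification unwinds to repeated application of the defining relations of $\mu_{R}(G)$, and the only one carrying real content is the Mackey double-coset formula; everything else is formal. Since the statement is due to Th\'evenaz and Webb, for the present paper it suffices to refer to \cite{tw} (and to Proposition $4.5.1$ of \cite{bouc_green} for the identification $\mu_{R}(G)\cong RB(\Omega_{G}^{2})$) once this outline has been indicated.
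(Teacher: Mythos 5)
Your outline is a faithful reconstruction of the standard Th\'evenaz--Webb argument (evaluation at $\Omega_{G}$ in one direction, decomposition along the orthogonal idempotents $t_{H}^{H}$ in the other), which is exactly what the paper relies on: it offers no proof of its own and simply cites \cite{tw}. One convention point to fix: with the paper's product on $RB(\Omega_{G}^{2})$ (pullback along the two inner legs), a span $(\Omega_{G}\xleftarrow{a}Y\xrightarrow{b}\Omega_{G})$ must act as $M_{*}(a)\circ M^{*}(b)$ for this to define a left module in which $t_{H}^{K}$ acts as $M_{*}(\pi_{H}^{K})$, so your general formula $M_{*}(b)\circ M^{*}(a)$ should be transposed to agree with your (correct) description of the action on generators.
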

\subsection{Burnside Trace.}
There is a tensor product in the category of Mackey functors (see \cite{bouc_green}, e.g.). With this tensor product, the category is a closed symmetric monoidal category with the Burnside functor as unit. So, using the formalism of May (\cite{may_trace}) where the dualizable Mackey functors are exactly the finitely generated projective Mackey functors, Bouc has defined the notion of Burnside dimension and Burnside trace for these Mackey functors (\cite{bouc_burnside_dim}). Let $M$ be a finitely generated projective Mackey functor. The Burnside trace, denoted by $Btr$ is a map from $End_{Mack_{R}(G)}(M)$ to $RB(G)$. Let $RB_{X}$ be the Dress construction of the Burnside functor at the finite $G$-set $X$ (see \cite{dress} or \cite{bouc_green}). It is well known that $RB_{X}$ is a finitely generated projective Mackey functor. By an adjunction property, we have an isomorphism of $R$-algebras $End_{Mack_{R}(G)}(RB_{X})\cong RB(X^2)$ where the product on this ring is defined as in Example \ref{burnside}. Using these identifications, the Burnside trace on this Mackey functor is in fact a map from $RB(X^2)$ to $RB(G)$. Here we use Green's notation for $RB(G)$. 
\begin{prop}
Let $X$ and $Z$ be finite $G$-sets, let $a$ and $b$ be maps of $G$-sets from $Z$ to $X$. Let 
\begin{equation*}
f=\xymatrix{
& Z \ar[dl]_{b}\ar[dr]^{a}&\\
X && X
}
\end{equation*}
The Burnside trace $Btr : RB(X^2)\to RB(G)$ is defined on $f$ by: $$Btr(f):=\{z\in Z\ | a(z)=b(z)\}\in RB(G).$$ 
\end{prop}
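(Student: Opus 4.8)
\emph{Proof proposal.} The Burnside trace is, by construction \cite{may_trace,bouc_burnside_dim}, the categorical trace in the closed symmetric monoidal category of Mackey functors: for a dualizable (equivalently finitely generated projective) Mackey functor $M$ with dual $M^{\vee}$, coevaluation $\eta\colon RB\to M\otimes M^{\vee}$ and evaluation $\epsilon\colon M^{\vee}\otimes M\to RB$, the trace of $g\in\mathrm{End}(M)$ is the composite
\begin{equation*}
RB\xrightarrow{\eta}M\otimes M^{\vee}\xrightarrow{g\otimes\mathrm{id}}M\otimes M^{\vee}\xrightarrow{\sigma}M^{\vee}\otimes M\xrightarrow{\epsilon}RB,
\end{equation*}
where $\sigma$ is the symmetry. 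A morphism $RB\to RB$ is determined by the element of $RB(G)=\mathrm{End}(RB)$ to which it sends $1\in RB(G)$, so the plan is to follow $1$ through this composite for $M=RB_{X}$ and $g=\phi_{f}$, the endomorphism of $RB_{X}$ corresponding to $f$ under $\mathrm{End}(RB_{X})\cong RB(X^{2})$.

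First I record the duality data for $RB_{X}$. Since the Dress construction satisfies $RB_{X}\otimes RB_{Y}\cong RB_{X\times Y}$ and $RB=RB_{G/G}$, the Mackey functor $RB_{X}$ is self-dual, $(RB_{X})^{\vee}\cong RB_{X}$, and hence $RB_{X}\otimes(RB_{X})^{\vee}\cong RB_{X^{2}}$. Under this identification the coevaluation $\eta$ sends $1\in RB(G)$ to the class of the diagonal $\Delta\colon X\hookrightarrow X^{2}$ in $RB_{X^{2}}(G/G)=RB(X^{2})$, and the evaluation $\epsilon\colon RB_{X^{2}}\to RB$ is, at each level $W$, the map sending the class of a $G$-set $V$ over $X^{2}\times W$, with structure maps $(v_{1},v_{2})\colon V\to X^{2}$ and $v_{3}\colon V\to W$, to the class of $\{v\in V\mid v_{1}(v)=v_{2}(v)\}$ over $W$ via $v_{3}$; that is, pull back along $\Delta$ and push forward to $W$. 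These maps satisfy the triangle identities and for $X=G/G$ reduce to the canonical isomorphism $RB\otimes RB\cong RB$; they may also be extracted from \cite{bouc_green}.

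Now I unwind the composite on $1\in RB(G)$. The coevaluation produces the diagonal span $(X^{2}\xleftarrow{\Delta}X)$. The operator $\phi_{f}\otimes\mathrm{id}$ acts through the first tensor factor, i.e.\ on the first $X$-coordinate, by the pullback–composition rule of Example \ref{burnside} applied to $f=(X\xleftarrow{b}Z\xrightarrow{a}X)$; applied to the diagonal class it returns the $G$-set $Z$ with the map $Z\to X^{2}$ given by $z\mapsto(b(z),a(z))$. The symmetry $\sigma$ exchanges the two copies of $X$, giving $z\mapsto(a(z),b(z))$, and $\epsilon$ then pulls this back along $\Delta$, yielding the $G$-set $\{z\in Z\mid a(z)=b(z)\}$ with its unique map to $G/G$, i.e.\ the element $\{z\in Z\mid a(z)=b(z)\}$ of $RB(G)$. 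As $Btr$ and the proposed formula are both $R$-linear and additive under disjoint union, and the iso classes of spans generate $RB(X^{2})$ as an $R$-module, the computation on a single span is enough.

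The only step requiring real care is pinning down $\eta$, $\epsilon$ and the self-duality $(RB_{X})^{\vee}\cong RB_{X}$ in a way simultaneously compatible with the monoidal structure on Mackey functors and with the algebra structure on $RB(X^{2})$ of Example \ref{burnside}; once those conventions are fixed, the computation above is forced. Two consistency checks confirm the bookkeeping: the answer is manifestly symmetric in $a$ and $b$, reflecting that a span and its transpose have equal trace; and for the diagonal span $(X\xleftarrow{\mathrm{id}}X\xrightarrow{\mathrm{id}}X)$ it gives $Btr(\mathrm{id}_{RB_{X}})=[X]\in RB(G)$, the Burnside dimension of $RB_{X}$.
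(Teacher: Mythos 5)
Your argument is correct, but it is genuinely more than what the paper does: the paper's entire proof is a citation of Corollary 2.7 of \cite{bouc_burnside_dim}, whereas you reconstruct the computation from the categorical definition of the trace. What you write is essentially the content of Bouc's proof: identify the duality data of the self-dual object $RB_{X}$ (coevaluation given by the diagonal span $G/G\leftarrow X\rightarrow X^{2}$, evaluation by its transpose), push $1\in RB(G)$ through $\epsilon\circ\sigma\circ(\phi_{f}\otimes\mathrm{id})\circ\eta$ using the span composition of Example \ref{burnside}, and read off the equalizer $\{z\in Z\mid a(z)=b(z)\}$. The one place where your write-up leans on unverified assertions is the duality data itself: you state, without checking, that your $\eta$ and $\epsilon$ satisfy the triangle identities and that the identification $(RB_{X})^{\vee}\cong RB_{X}$ is compatible with the algebra structure $\mathrm{End}(RB_{X})\cong RB(X^{2})$; you flag this honestly and it is exactly the point verified in \cite{bouc_green} and \cite{bouc_burnside_dim}, so this is a matter of completeness rather than a gap. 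Your linearity reduction to a single span and the two sanity checks (symmetry in $a,b$; $Btr(\mathrm{id}_{RB_{X}})=[X]$, the Burnside dimension) are sound. The trade-off is the usual one: the citation is shorter and shifts all convention-chasing to the reference, while your derivation makes the formula self-contained and makes transparent why the answer is the ``coincidence set'' of the two legs of the span.
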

\begin{proof}
Corollary 2.7 \cite{bouc_burnside_dim}. 
\end{proof}
By composing the Burnside trace by any $R$-linear map $RB(G)\to R$ we have a linear form on $RB(X^2)$. \\
\begin{re}\label{gene}
Let $R$ be a commutative ring. Let $f$ be a linear map from $RB(G)\to R$, such that $f(G/1)=1$. The trace map $f\circ Btr$ generalizes the usual trace map for the group ring $RG$ in the following way. The Burnside algebra $RB(G/1\times G/1)$ is isomorphic to $RG$. The isomorphism is defined as follow: a transitive $G$-set over $G/1\times G/1$ is isomorphic to
\begin{equation*}
f_{g} = \xymatrix{
& G/1\ar[rd]^{g}\ar@{=}[ld] &,\\
G/1 && G/1
}
\end{equation*}
for some $g\in G$.
The element $f_{g}$ is sent to $g\in RG$. Now, the Burnside trace of the element $f_{g}$ is $\delta_{g,1}G/1$. 
\end{re}
Using the fact that the Mackey algebra $\mu_{R}(G)$ is isomorphic to $RB(\Omega_{G}^{2})$, the Burnside trace gives a linear map from $\mu_{R}(G)$ to $RB(G)$. Using Proposition \ref{prop_b} we have as immediate corollary:
\begin{coro}
The Burnside Trace $Btr$ on the Mackey algebra is defined on a basis element~by \begin{equation*}
Btr(t^{K}_{H}xr^{L}_{H^{x}}) =\left\{\begin{array}{c}G/H \hbox{ if $K=L$ and $x\in L$} \\0 \hbox{ if not.}\end{array}\right.
\end{equation*}
\end{coro}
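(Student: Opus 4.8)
The plan is to transport everything through the algebra isomorphism $\beta\colon \mu_{R}(G)\xrightarrow{\sim} RB(\Omega_{G}^{2})$ of Proposition \ref{prop_b} and then apply the explicit description of $Btr$ on $RB(X^{2})$ recalled just above, with $X=\Omega_{G}$. Writing the basis element as $t^{K}_{H}xr^{L}_{H^{x}}=t^{K}_{H}c_{x,H^{x}}r^{L}_{H^{x}}$, where $x\in[K\backslash G/L]$ and $H\leqslant K\cap{}^{x}L$, the first step is to compute $\beta(t^{K}_{H})\beta(c_{x,H^{x}})\beta(r^{L}_{H^{x}})$ using the pullback formula for the product in $RB(\Omega_{G}^{2})$ from Example \ref{burnside}.

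The composition $\beta(t^{K}_{H})\beta(c_{x,H^{x}})$ requires the pullback of the identity of $G/H$ with itself, which is again $G/H$; composing the result with $\beta(r^{L}_{H^{x}})$ requires the pullback of $\gamma_{x,H^{x}}\colon G/H\to G/H^{x}$ with the identity of $G/H^{x}$, whose apex is the graph of $\gamma_{x,H^{x}}$, again isomorphic to $G/H$. Keeping track of which leg lands in which component $G/M$ of $\Omega_{G}=\bigsqcup_{M\leqslant G}G/M$, one obtains that $\beta(t^{K}_{H}xr^{L}_{H^{x}})$ is the class of
\begin{equation*}
\xymatrix{
& G/H\ar[dl]_{a}\ar[dr]^{b}& \\
G/K & & G/L
}
\end{equation*}
where $a(yH)=yK$ is the canonical projection into the summand $G/K$ of $\Omega_{G}$, and $b(yH)=yxL$ lands in the summand $G/L$.

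Finally, I would feed this span into the formula $Btr(f)=\{z\mid a(z)=b(z)\}$. Since $G/K$ and $G/L$ are distinct summands of $\Omega_{G}$ whenever $K\neq L$, that set is empty in this case. If $K=L$, the equality $a(yH)=b(yH)$ reads $yK=yxK$, i.e.\ $xK=K$, i.e.\ $x\in K=L$; this condition is independent of $y$, so the set is either empty (when $x\notin L$) or all of $G/H$ (when $x\in L$), and in the latter case $Btr(t^{K}_{H}xr^{L}_{H^{x}})=G/H\in RB(G)$. The only step requiring genuine care is the double pullback computing $\beta$ on a product of three generators — in particular the bookkeeping of the summands of $\Omega_{G}$ into which each leg maps, which is exactly what produces the constraint $K=L$; everything else is a direct substitution into the trace formula.
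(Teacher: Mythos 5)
Your proposal is correct and is exactly the argument the paper intends: the paper derives the corollary "immediately" from Proposition \ref{prop_b} together with the span description of $Btr$ on $RB(X^{2})$, which is precisely the computation you carry out (transport the basis element through $\beta$, identify the resulting span $G/K \leftarrow G/H \rightarrow G/L$ with legs $yH\mapsto yK$ and $yH\mapsto yxL$, and read off the equalizer). Your write-up simply makes explicit the pullback bookkeeping that the paper leaves to the reader.
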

\begin{lemme}\label{calc}
Let $t^{H}_{K}xr^{L}_{K^{x}}$ and $t^{L}_{Q}yr^{H}_{Q^{y}}$ be two basis elements of $\mu_{R}(G)$. Then
\begin{equation*}
Btr\big(t^{H}_{K}xr^{L}_{K^{x}}t^{L}_{Q}yr^{H}_{Q^{y}}\big)=\sum_{\alpha\in[K^{x}\backslash L /Q]} \delta_{x\alpha y, H} G/(K\cap \ ^{x\alpha}Q),
\end{equation*}
where $\delta_{x\alpha y, H} =1$ if $x\alpha y\in H$ and $0$ otherwise. 
\end{lemme}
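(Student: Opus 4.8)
The plan is to compute the product $t^{H}_{K}xr^{L}_{K^{x}}\cdot t^{L}_{Q}yr^{H}_{Q^{y}}$ directly in $\mu_{R}(G)$, express it as an explicit $R$-linear combination of elements of the shape $t^{H}_{P}c_{z,P^{z}}r^{H}_{P^{z}}$, and then read off its Burnside trace term by term using the corollary preceding the statement. Recall first that the basis element $t^{H}_{K}xr^{L}_{K^{x}}$ abbreviates the product of generators $t^{H}_{K}\,c_{x,K^{x}}\,r^{L}_{K^{x}}$ (and similarly for the second factor), so the element whose trace we want is
\begin{equation*}
t^{H}_{K}\,c_{x,K^{x}}\,r^{L}_{K^{x}}\,t^{L}_{Q}\,c_{y,Q^{y}}\,r^{H}_{Q^{y}}.
\end{equation*}

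The main step is to expand the middle product $r^{L}_{K^{x}}t^{L}_{Q}$ with the Mackey relation (the sixth relation in the definition of $\mu_{R}(G)$), applied inside the group $L$ to the subgroups $K^{x}\leqslant L\geqslant Q$:
\begin{equation*}
r^{L}_{K^{x}}t^{L}_{Q}=\sum_{\alpha\in[K^{x}\backslash L/Q]}t^{K^{x}}_{K^{x}\cap\,{}^{\alpha}Q}\,c_{\alpha,\,(K^{x})^{\alpha}\cap Q}\,r^{Q}_{(K^{x})^{\alpha}\cap Q}.
\end{equation*}
Substituting and distributing turns the element into a sum over $\alpha\in[K^{x}\backslash L/Q]$ of the terms $t^{H}_{K}\,c_{x,K^{x}}\,t^{K^{x}}_{K^{x}\cap\,{}^{\alpha}Q}\,c_{\alpha,\,(K^{x})^{\alpha}\cap Q}\,r^{Q}_{(K^{x})^{\alpha}\cap Q}\,c_{y,Q^{y}}\,r^{H}_{Q^{y}}$.

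Each such summand I would now simplify using the remaining relations of $\mu_{R}(G)$: the commutation rules $c_{g,K}t^{K}_{H}=t^{{}^{g}K}_{{}^{g}H}c_{g,H}$ and $c_{g,H}r^{K}_{H}=r^{{}^{g}K}_{{}^{g}H}c_{g,K}$, the concatenation rules $t^{L}_{K}t^{K}_{H}=t^{L}_{H}$ and $r^{K}_{H}r^{L}_{K}=r^{L}_{H}$, and the rule $c_{g',{}^{g}H}c_{g,H}=c_{g'g,H}$. Concretely, I push $c_{x,K^{x}}$ rightwards through $t^{K^{x}}_{K^{x}\cap\,{}^{\alpha}Q}$ — using ${}^{x}(K^{x})=K$ and ${}^{x}\!\big(K^{x}\cap\,{}^{\alpha}Q\big)=K\cap\,{}^{x\alpha}Q$ — and absorb the resulting $t^{K}_{K\cap\,{}^{x\alpha}Q}$ into $t^{H}_{K}$; symmetrically I push $c_{y,Q^{y}}$ leftwards through $r^{Q}_{(K^{x})^{\alpha}\cap Q}$ and absorb the resulting restriction into $r^{H}_{Q^{y}}$ (here one uses $Q^{y}\leqslant H$, valid since $Q\leqslant{}^{y}H$); finally I collapse the three remaining conjugation factors into $c_{x\alpha y,\,(K\cap\,{}^{x\alpha}Q)^{x\alpha y}}$, the bookkeeping resting on the identity $\big({}^{x\alpha}D\big)^{x\alpha y}=D^{y}$ for $D:=(K^{x})^{\alpha}\cap Q$. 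Setting $P_{\alpha}:=K\cap\,{}^{x\alpha}Q$ this gives
\begin{equation*}
t^{H}_{K}xr^{L}_{K^{x}}\cdot t^{L}_{Q}yr^{H}_{Q^{y}}=\sum_{\alpha\in[K^{x}\backslash L/Q]}t^{H}_{P_{\alpha}}\,c_{x\alpha y,\,P_{\alpha}^{x\alpha y}}\,r^{H}_{P_{\alpha}^{x\alpha y}},
\end{equation*}
and one checks en route the containments $P_{\alpha}\leqslant H$ and $P_{\alpha}^{x\alpha y}\leqslant H$ so that each symbol is a genuine generator.

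To finish, I apply the corollary preceding the statement — which says $Btr\big(t^{H}_{P}c_{z,P^{z}}r^{H}_{P^{z}}\big)$ equals $G/P$ when $z\in H$ and $0$ otherwise — to each summand; by $R$-linearity of $Btr$ this produces exactly
\begin{equation*}
Btr\big(t^{H}_{K}xr^{L}_{K^{x}}\cdot t^{L}_{Q}yr^{H}_{Q^{y}}\big)=\sum_{\alpha\in[K^{x}\backslash L/Q]}\delta_{x\alpha y,H}\,G/(K\cap\,{}^{x\alpha}Q).
\end{equation*}
I expect the only real work to be the conjugation bookkeeping in the simplification step. The one subtlety deserving a word: the elements $t^{H}_{P_{\alpha}}c_{x\alpha y,P_{\alpha}^{x\alpha y}}r^{H}_{P_{\alpha}^{x\alpha y}}$ need not be among the chosen basis vectors of Proposition \ref{basis} ($x\alpha y$ need not be the chosen representative in $[H\backslash G/H]$, nor $P_{\alpha}$ the chosen representative of its conjugacy class); but the formula for $Btr$ in the corollary, obtained from the identification $\mu_{R}(G)\cong RB(\Omega_{G}^{2})$ of Proposition \ref{prop_b}, is valid for every expression of that shape, so nothing is lost.
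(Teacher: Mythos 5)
Your proof is correct and follows exactly the paper's route: expand the product via the Mackey formula into the terms $t^{H}_{K\cap{}^{x\alpha}Q}\,x\alpha y\,r^{H}_{Q^{y}\cap K^{x\alpha y}}$ and apply the trace formula from the preceding corollary term by term. You simply spell out the conjugation bookkeeping that the paper leaves implicit, and your remark that the corollary's formula applies to arbitrary elements of that shape (not only chosen basis vectors) is a legitimate and correctly resolved point.
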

\begin{proof}
This follows from the computation of the product $t^{H}_{K}xr^{L}_{K^{x}}t^{L}_{Q}yr^{H}_{Q^{y}}$ by using the Mackey formula:
\begin{equation*}
Btr(t^{H}_{K}xr^{L}_{K^{x}}t^{L}_{Q}yr^{H}_{Q^{y}})=\sum_{\alpha\in[K^{x}\backslash L / Q] } Btr(t^{H}_{K\cap^{x\alpha}Q}\ x\alpha y\ r^{H}_{Q^{y}\cap K^{x\alpha y}}).
\end{equation*}
\end{proof}

Let $(-,-)_{B}$ be the bilinear map $\mu_{R}(G)\times \mu_{R}(G) \to RB(G)$ defined by \begin{center}$(x,y)_{B}:=Btr(xy)$ for $x,y\in \mu_{R}(G)$. \end{center}
\begin{lemme}\label{bl}
In the basis of Proposition \ref{basis} the matrix $M$ of the bilinear form $(-,-)_{B}$ is a permutation by block matrix. The possibly non-zero blocks can be labelled by $(H,L,x,y)$ where $H$ and $L$ are subgroups of $G$. The element $x$ is a representative of a double coset $H\backslash G/L$ and $y$ is a representative of $L\backslash G/H$ such that $HxL=Hy^{-1}L$.
\end{lemme}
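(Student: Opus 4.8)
The plan is to combine Lemma~\ref{calc} with the block decomposition of $\mu_{R}(G)$ afforded by the orthogonal idempotents $t^{H}_{H}$. First I would record two elementary facts coming from the defining relations: the $t^{H}_{H}$ form a complete set of orthogonal idempotents (i.e. $t^{H}_{H}t^{L}_{L}=\delta_{H,L}\,t^{H}_{H}$ and $\sum_{H\leqslant G}t^{H}_{H}=1$), and a basis element $t^{H}_{K}xr^{L}_{K^{x}}$ satisfies $t^{H}_{H}\,t^{H}_{K}xr^{L}_{K^{x}}\,t^{L}_{L}=t^{H}_{K}xr^{L}_{K^{x}}$, so it lies in $t^{H}_{H}\mu_{R}(G)t^{L}_{L}$; moreover the Corollary computing $Btr$ on basis elements shows that $Btr$ vanishes identically on $t^{H}_{H}\mu_{R}(G)t^{H'}_{H'}$ whenever $H\neq H'$. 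Accordingly I would partition both the rows and the columns of the matrix $M$ of $(-,-)_{B}$ by triples $(A,B,z)$ with $A,B\leqslant G$ and $z\in[A\backslash G/B]$: the row-block $(A,B,z)$ consists of the basis elements $t^{A}_{K}zr^{B}_{K^{z}}$ with $K$ running over the conjugacy classes of subgroups of $A\cap{}^{z}B$, and symmetrically for the column-blocks.

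Next I would carry out the vanishing analysis. For two basis elements $e=t^{H}_{K}xr^{L}_{K^{x}}$ and $f=t^{L'}_{Q}yr^{H'}_{Q^{y}}$, the relations give $e=t^{H}_{H}\,e\,t^{L}_{L}$ and $f=t^{L'}_{L'}\,f\,t^{H'}_{H'}$, hence $ef=t^{H}_{H}\,e\,(t^{L}_{L}t^{L'}_{L'})\,f\,t^{H'}_{H'}$; this is zero unless $L=L'$, and for $L=L'$ it lies in $t^{H}_{H}\mu_{R}(G)t^{H'}_{H'}$, so $Btr(ef)=0$ unless also $H=H'$. Assuming $L=L'$ and $H=H'$, Lemma~\ref{calc} gives $(e,f)_{B}=Btr(ef)=\sum_{\alpha\in[K^{x}\backslash L/Q]}\delta_{x\alpha y,H}\,G/(K\cap{}^{x\alpha}Q)$, which can be non-zero only if $x\alpha y\in H$ for some $\alpha\in L$; a short double-coset manipulation (rewrite $x\alpha y\in H$ with $\alpha\in L$ as $y^{-1}\in HxL$) shows this is equivalent to $HxL=Hy^{-1}L$.

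Finally I would assemble the block picture. By the previous step, the block of $M$ sitting in row-block $(H,L,x)$ and column-block $(L',H',y)$ can be non-zero only when $L'=L$, $H'=H$ and $HxL=Hy^{-1}L$; and for fixed $(H,L,x)$ the last condition pins down the double coset $Lx^{-1}H$, hence its chosen representative $y$. So the assignment sending the row-block $(H,L,x)$ to the column-block $(L,H,y)$ is, in terms of the chosen representatives, the involution $(H,L,x)\leftrightarrow(L,H,\overline{x^{-1}})$ on the common index set of triples, in particular a bijection between row-blocks and column-blocks. Hence, viewed as a block matrix for this partition, $M$ has exactly one possibly non-zero block in each block-row and in each block-column; that is, $M$ is a permutation by block matrix, and the possibly non-zero blocks are exactly those labelled by the quadruples $(H,L,x,y)$ with $x\in[H\backslash G/L]$, $y\in[L\backslash G/H]$ and $HxL=Hy^{-1}L$.

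The real input here is Lemma~\ref{calc}, which is already available, so the rest is bookkeeping. The one point I expect to require a little care is checking that the row partition and the column partition are indexed by the same set of triples and that the induced matching of blocks is a genuine bijection (an involution), which is precisely what legitimises the word ``permutation''; the verification of the idempotent relations and of the equivalence $\big(\exists\,\alpha\in L:\ x\alpha y\in H\big)\Leftrightarrow HxL=Hy^{-1}L$ is routine.
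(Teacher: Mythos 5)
Your proposal is correct and follows essentially the same route as the paper: first a block decomposition indexed by pairs of subgroups (non-zero only for $(H,L)$ against $(L,H)$), then a refinement by double coset representatives $x,y$, with Lemma~\ref{calc} forcing $x\alpha y\in H$ and hence $HxL=Hy^{-1}L$ for a non-zero block. Your use of the idempotents $t^{H}_{H}$ and your explicit verification that each row-block meets exactly one column-block are slightly more detailed than the paper's argument, but the substance is identical.
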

\begin{proof}
In the basis of Proposition \ref{basis}, it is easy to see that the matrix $M$ of $(-,-)_{B}$ is a block matrix, where the blocks are indexed by two pairs of subgroups of $G$. Indeed the block matrix indexed by $(H,L)$ and $(M,N)$ is the sub-matrix of $M$ where the columns are indexed by the basis elements of the form $t^{H}_{K}xr^{L}_{K^{x}}$ and the lines are indexed by the basis elements of the form $t^{M}_{P}yr^{N}_{P^{y}}$. Now the product $t^{H}_{K}xr^{L}_{K^{x}}t^{M}_{P}yr^{N}_{P^{y}}$ is zero unless $L=M$ and $Btr\big(t^{H}_{K}xr^{L}_{K^{x}}t^{M}_{P}yr^{N}_{P^{y}}\big)=0$ unless $H=N$. So the non-zero blocks are exactly the blocks indexed by  the pairs of subgroups $(H,L)$ and $(L,H)$.
\newline Let $Bl$ be the block of $M$ indexed by $(H,L)$ and $(L,H)$. Then, the matrix $Bl$ is again a block matrix where the blocks are indexed by elements $x\in [H\backslash G / L]$ and $y\in [L\backslash G /H]$. Let us denote by $Bl_{H,L,x,y}$ the corresponding block.
\newline If $HxL \neq Hy^{-1} L$ then $Bl_{H,L,x,y} = 0$. Indeed if the restriction of $(-,-)_{B}$ to the block $Bl_{x,y}$ is non zero, then there are subgroups $K \leqslant  H\cap {\ ^{x}L}$ and $Q\leqslant L\cap {\ ^{y}H}$ and an element $\alpha\in [K^{x}\backslash L / Q]$ such that $x\alpha y \in H$. Then there exist $h\in H$ such that $x = hy^{-1}\alpha^{-1}$, so $HxL =Hy^{-1}L$. 
\end{proof}
\begin{notation} Let $\phi_{G}$ be a linear map from $RB(G)$ to $R$. 
\begin{itemize}
\item We denote by $tr_{\phi_{G}}$ the composite $\phi_{G}\circ Btr : \mu_{R}(G)\to R$.
\item We denote by $(-,-)_{\phi_{G}}$ the bilinear form on $\mu_{R}(G)$ defined by $(x,y)_{\phi_{G}}=tr_{\phi_{G}}(xy),$ for $x,y\in\mu_{R}(G)$. 
\item We denote by $b_{\phi_{G}}$ the bilinear form on $RB(G)$ defined by $b_{\phi_{G}}(X,Y):=\phi_{G}(XY)$. 
\end{itemize}
\end{notation}
\begin{lemme}
The map $tr_{\phi}$ is a central linear form on the Mackey algebra $\mu_{R}(G)$.
\end{lemme}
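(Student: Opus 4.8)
The plan is to prove that $tr_{\phi_G}=\phi_G\circ Btr$ is $R$-linear and satisfies $tr_{\phi_G}(xy)=tr_{\phi_G}(yx)$ for all $x,y\in\mu_R(G)$; a linear form $\lambda$ on an $R$-algebra $A$ is called central (a trace form) precisely when $\lambda(ab)=\lambda(ba)$ for all $a,b\in A$. Linearity is immediate since $Btr$ and $\phi_G$ are both $R$-linear. As $\phi_G$ is fixed, it therefore suffices to establish the identity
\[
Btr(xy)=Btr(yx)\quad\text{in }RB(G)
\]
for all $x,y\in\mu_R(G)$.

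There are two natural routes. The conceptual one is to recall that, under the isomorphisms $\mu_R(G)\cong RB(\Omega_G^2)\cong End_{Mack_R(G)}(RB_{\Omega_G})$ of Proposition \ref{prop_b}, the Burnside trace is the categorical trace of the dualizable object $RB_{\Omega_G}$ in the closed symmetric monoidal category $Mack_R(G)$, taking values in $End_{Mack_R(G)}(RB)=RB(G)$ (\cite{bouc_burnside_dim}). In any symmetric monoidal category the trace of endomorphisms of a dualizable object is cyclic, i.e. $tr(f\circ g)=tr(g\circ f)$; this is one of the basic formal properties of traces established by May (\cite{may_trace}). Applying it to the endomorphisms corresponding to $x$ and $y$ gives $Btr(xy)=Btr(yx)$, and composing with $\phi_G$ yields the claim.

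Alternatively, one can give a self-contained proof by direct computation on the basis of Proposition \ref{basis}. By Lemma \ref{bl}, $Btr(xy)$ vanishes unless $x=t^H_K\,x_0\,r^L_{K^{x_0}}$ and $y=t^L_Q\,y_0\,r^H_{Q^{y_0}}$ for subgroups $K\leqslant H\cap{}^{x_0}L$ and $Q\leqslant L\cap{}^{y_0}H$ (and symmetrically for $Btr(yx)$), in which case Lemma \ref{calc}, applied to the pairs $(x,y)$ and $(y,x)$ respectively, gives
\[
\begin{aligned}
Btr(xy)&=\sum_{\alpha\in[K^{x_0}\backslash L/Q]}\delta_{x_0\alpha y_0,H}\;G/(K\cap{}^{x_0\alpha}Q),\\
Btr(yx)&=\sum_{\beta\in[Q^{y_0}\backslash H/K]}\delta_{y_0\beta x_0,L}\;G/(Q\cap{}^{y_0\beta}K).
\end{aligned}
\]
One then checks that $\alpha\mapsto\beta:=(x_0\alpha y_0)^{-1}$ is a well-defined bijection between the double cosets with $x_0\alpha y_0\in H$ and those with $y_0\beta x_0\in L$: well-definedness on double cosets and the fact that the condition $x_0\alpha y_0\in H$ depends only on $K^{x_0}\alpha Q$ both use $x_0^{-1}Kx_0\leqslant L$ and $y_0^{-1}Qy_0\leqslant H$, which hold since $K\leqslant{}^{x_0}L$ and $Q\leqslant{}^{y_0}H$. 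Moreover, for such an $\alpha$ one has $y_0\beta=\alpha^{-1}x_0^{-1}$, so conjugation by $x_0\alpha$ carries $Q\cap{}^{y_0\beta}K$ onto $K\cap{}^{x_0\alpha}Q$; hence the corresponding transitive $G$-sets coincide in $RB(G)$. Matching the sums term by term gives $Btr(xy)=Btr(yx)$, and then $\phi_G(Btr(xy))=\phi_G(Btr(yx))$.

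The only real difficulty lies in the second approach: one must keep the left/right conjugation conventions of Th\'evenaz--Webb consistent throughout, both when showing that $\delta_{x_0\alpha y_0,H}$ is constant on the double coset $K^{x_0}\alpha Q$ and when verifying that $\alpha\mapsto(x_0\alpha y_0)^{-1}$ matches up the surviving terms with their $G$-sets. If one is willing to quote the standard formal properties of traces in a symmetric monoidal category, the first approach sidesteps all of this bookkeeping, and that is the route I would take.
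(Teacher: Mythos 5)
Your preferred route is exactly the paper's proof: the paper disposes of this lemma in one line by invoking the fact that the Burnside trace is central, i.e.\ the cyclicity $Btr(xy)=Btr(yx)$ of the categorical trace in the symmetric monoidal category $Mack_R(G)$, and composing with the linear map $\phi_G$. Your alternative hands-on verification on the basis of Proposition \ref{basis} also checks out (the map $\alpha\mapsto(x_0\alpha y_0)^{-1}$ is indeed a well-defined bijection on the surviving double cosets, and $y_0\beta=\alpha^{-1}x_0^{-1}$ shows the corresponding stabilizers are conjugate), so it is a correct, more elementary substitute if one does not want to quote May's formalism.
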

\begin{proof}
This follows from the fact that the Burnside trace is central.
\end{proof}
\begin{de} Let $G$ be a finite group and  $\phi=(\phi_{H})_{H\leqslant G}$ be a family of linear maps such that $\phi_{H}$ is a linear form on $RB(H)$. The family $\phi$ is stable under induction if for every $H$ subgroup of $G$ and finite $H$-set $X$ we have $\phi_{G}(Ind_{H}^{G}(X)) = \phi_{H}(X)$.
\end{de}
\begin{lemme}\label{red1}
Let $\phi=(\phi_{H})_{H\leqslant G}$ be a stable by induction family of linear forms on $\big(RB(H)\big)_{H\leqslant G}$. In the usual basis of $\mu_{R}(G)$, the matrix of $(-,-)_{\phi_{G}}$ is a permutation by block matrix. A non-zero block indexed by $(H,L,x,y)$ of this matrix is equal, up to permutation of the lines and the columns, to the block $(\Theta,\Theta,1,1)$ of the matrix of $(-,-)_{\phi_{\Theta}}$ for $\Theta = L\cap H^{x}$.
\end{lemme}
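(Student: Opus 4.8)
The plan is to read off the entries of the matrix of $(-,-)_{\phi_{G}}$ from Lemma \ref{calc}, apply $\phi_{G}$, and use the hypothesis to collapse each non-zero block onto the corresponding block for a smaller group. Concretely, since $(x,y)_{\phi_{G}} = tr_{\phi_{G}}(xy) = \phi_{G}\big(Btr(xy)\big) = \phi_{G}\big((x,y)_{B}\big)$, the matrix of $(-,-)_{\phi_{G}}$ in the basis of Proposition \ref{basis} is obtained from the matrix of $(-,-)_{B}$ by applying the $R$-linear map $\phi_{G}$ entrywise. By Lemma \ref{bl} the latter is a permutation by block matrix whose only possibly non-zero blocks are labelled $(H,L,x,y)$ with $HxL=Hy^{-1}L$; since $\phi_{G}$ sends a zero block to a zero block, the matrix of $(-,-)_{\phi_{G}}$ is again permutation by block with the same pattern on block indices. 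This settles the first assertion, and it remains to compute one non-zero block.

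Fix such a block, labelled $(H,L,x,y)$ with $HxL=Hy^{-1}L$, set $\Theta:=L\cap H^{x}$, and choose $l_{1}\in L$, $h_{1}\in H$ with $y=l_{1}x^{-1}h_{1}$ (possible since $LyH=Lx^{-1}H$). Conjugation by $x$ identifies $\Theta$ with $H\cap{}^{x}L$, and since ${}^{y}H={}^{l_{1}}({}^{x^{-1}}H)$ one gets $L\cap{}^{y}H={}^{l_{1}}\Theta$. Hence the rows of the block, indexed by $K\leqslant H\cap{}^{x}L$ up to $(H\cap{}^{x}L)$-conjugacy, correspond bijectively to the subgroups $P:=K^{x}\leqslant\Theta$ up to $\Theta$-conjugacy, and the columns, indexed by $Q\leqslant L\cap{}^{y}H$ up to $(L\cap{}^{y}H)$-conjugacy, correspond bijectively to $P':={}^{l_{1}^{-1}}Q\leqslant\Theta$ up to $\Theta$-conjugacy.

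By Lemma \ref{calc} the $(K,Q)$-entry of the block is $\phi_{G}\big(\sum_{\alpha\in[K^{x}\backslash L/Q]}\delta_{x\alpha y,H}\,G/(K\cap{}^{x\alpha}Q)\big)$. Using $\alpha,l_{1}\in L$, the condition $x\alpha y\in H$ is equivalent to $\alpha l_{1}\in L\cap{}^{x^{-1}}H=\Theta$, a condition depending only on the double coset $K^{x}\alpha Q$; the substitution $\alpha\mapsto\beta:=\alpha l_{1}$ sets up a bijection between the surviving double cosets and $[P\backslash\Theta/P']$, and for $\alpha=\beta l_{1}^{-1}$ with $\beta\in\Theta$ a short computation gives $K\cap{}^{x\alpha}Q={}^{x}(P\cap{}^{\beta}P')$. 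Since $G/{}^{x}T\cong G/T$ in $RB(G)$ and $P\cap{}^{\beta}P'\leqslant\Theta$, each summand equals $Ind_{\Theta}^{G}\big(\Theta/(P\cap{}^{\beta}P')\big)$.

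Finally, stability under induction applied to each summand yields $\phi_{G}\big(Ind_{\Theta}^{G}(\Theta/(P\cap{}^{\beta}P'))\big)=\phi_{\Theta}\big(\Theta/(P\cap{}^{\beta}P')\big)$, so by linearity the $(K,Q)$-entry equals $\phi_{\Theta}\big(\sum_{\beta\in[P\backslash\Theta/P']}\Theta/(P\cap{}^{\beta}P')\big)$, which by Lemma \ref{calc} applied inside $\mu_{R}(\Theta)$ is exactly the $(P,P')$-entry of the block $(\Theta,\Theta,1,1)$ of the matrix of $(-,-)_{\phi_{\Theta}}$. Thus the block $(H,L,x,y)$ agrees with the block $(\Theta,\Theta,1,1)$ of $(-,-)_{\phi_{\Theta}}$ after relabelling rows by $K\mapsto K^{x}$ and columns by $Q\mapsto{}^{l_{1}^{-1}}Q$, which is the claim. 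I expect the only genuinely delicate point to be the double-coset bookkeeping of the third paragraph — that $\delta_{x\alpha y,H}$ is constant on $K^{x}\backslash L/Q$-cosets and that the surviving cosets match $[P\backslash\Theta/P']$ bijectively; once the summands are rewritten as inductions from $\Theta$, the hypothesis closes the argument immediately.
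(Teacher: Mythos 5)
Your proposal is correct and follows essentially the same route as the paper: it reads off the block entries from Lemma \ref{calc}, reindexes the surviving double cosets via the substitution $\alpha\mapsto\alpha l_{1}$ (the paper's Lemma \ref{le1}), rewrites each transitive $G$-set as an induction from $\Theta=L\cap H^{x}$, and closes with the stability-by-induction hypothesis. The only difference is cosmetic: you make the row/column relabellings $K\mapsto K^{x}$ and $Q\mapsto {}^{l_{1}^{-1}}Q$ and the constancy of $\delta_{x\alpha y,H}$ on double cosets more explicit than the paper does.
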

\begin{proof}
Let $Bl_{H,L,x,y}$ be a non-zero block of the matrix of $tr_{\phi_{G}}$. That is $H$ and $L$ are subgroups of $G$, the element $x$ is a representative of the double coset $H\backslash G/L$ and the element $y$ is a representative of $L\backslash G/H$. Since the block is non-zero, the double cosets $HxL$ and $Hy^{-1}L$ are equal. Let $h\in H$ and $l\in L$ such that $$y=lx^{-1}h.$$
Now the basis elements which appear for this block are: for the lines $t^{H}_{K}xR^{L}_{K^{x}}$ for $K\leqslant H\cap\ ^{x}L$ up to conjugacy in $H\cap\ ^{x}L$, and for the columns $t^{L}_{Q}yR^{H}_{Q^{y}}$ where $Q\leqslant L\cap\ ^{y} H$ up to conjugacy in $ L\cap\ ^{y} H$. By Lemma \ref{calc}, the entry indexed by this two elements is: $$\sum_{\alpha\in[K^{x}\backslash L /Q]} \delta_{x\alpha y, H} tr_{\phi_{G}}\big(G/(K\cap \ ^{x\alpha}Q)\big).$$
\begin{lemme}\label{le1} 
 The map $f$ defined by $f(\alpha)=\alpha l$ induces a bijection between the set $$\{ \alpha \in [K^{x}\backslash L /Q]\ ; x\alpha y \in H\},$$ and the set $$\{w\in [K^{x}\backslash L\cap H^{x}/ Q^{l}]\}.$$\end{lemme}
\begin{proof}
\begin{itemize}
\item Let $\alpha\in L$ such that $x\alpha y \in H$. Since $y=lx^{-1}h$ we have:
\begin{align*}
x\alpha y \in H &\Leftrightarrow x\alpha l x^{-1} h \in H\\
&\Leftrightarrow \alpha l \in H^{x},
\end{align*}
so $\alpha l \in L\cap H^{x}$.
\item The map $f$ is well defined: if $\alpha$ and $\alpha'$ are in the same double coset, there are $k\in J$ and $q\in Q$ such that $\alpha'=x^{-1}kx\alpha q$, and 
\begin{equation*}
f(\alpha')=x^{-1}kx\alpha q l = x^{-1}kx\alpha l l^{-1}q l,
\end{equation*}
so $f(\alpha)$ and $f(\alpha')$ are in the same double coset. 
\item The map $f$ is injective: if $f(\alpha)=f(\alpha')$ then there are $k\in K$ and $q\in Q$ such that $\alpha l = x^{-1}kx \alpha' l l^{-1}q l = x^{-1}kx \alpha' q l$, so $\alpha$ and $\alpha'$ are in the same double coset.
\item The map $f$ is surjective: let $w\in L\cap H^{x}$, then $wl^{-1}\in L$ and $f(wl^{-1}) = w$. 
\end{itemize}
\end{proof}
So, we have:
\begin{align*}
tr_{\phi_{G}}(t^{H}_{K}xR^{L}_{K^{x}}t^{L}_{Q}yR^{H}_{Q^{y}})&=\sum_{\alpha\in[K^{x}\backslash L /Q]} \delta_{x\alpha y, H} tr_{\phi_{G}}\big(G/(K\cap \ ^{x\alpha}Q)\big)\\
&=\sum_{w\in [K^{x}\backslash L\cap H^{x}/Q^{l}]} \phi_{G}(G/K\cap\ ^{xw}(Q^{l}))\\
&=\sum_{w\in [K^{x}\backslash L\cap H^{x}/Q^{l}]} \phi_{G}(G/K^{x}\cap\ ^{w}(Q^{l}))\\
&=\sum_{w\in [K^{x}\backslash L\cap H^{x}/Q^{l}]} \phi_{G}(Ind_{L\cap H^{x}}^{G}(L\cap H^{x}/K^{x}\cap\ ^{w}(Q^{l})))\\
&=\sum_{w\in [K^{x}\backslash L\cap H^{x}/Q^{l}]}\phi_{L\cap H^{x}}(L\cap H^{x}/K^{x}\cap\ ^{w}(Q^{l})). 
\end{align*}
Let $\Theta= L\cap\ ^{x}H$. The basis elements which appear for the block $Bl_{\Theta,\Theta,1,1}$ of the matrix of $\phi_{\Theta}$ are the $t^{\Theta}_{A}r^{\Theta}_{A}$  for $A\leqslant \Theta$ up to conjugacy. Let $A$ and $B$ be subgroups of $\Theta$, the entry corresponding to $t^{\Theta}_{A}r^{\Theta}_{A}$ and $t^{\Theta}_{B}r^{\Theta}_{B}$ is: 
$$ \sum_{w\in [A\backslash \Theta / B]} \phi_{\Theta}(\Theta/ A\cap\ ^{w} B).$$ So the blocks $B_{H,L,x,y}$ and $B_{\Theta,\Theta,1,1}$ are equals up to permutation of the lines and the columns. In particular, these two matrices have the same determinant, up to a sign. 
\end{proof}
\begin{lemme}\label{red2}
Let $\Theta$ be a finite group, and $\mu'$ the sub-algebra of $\mu_{R}(\Theta)$ generated by the elements of the form $t^{\Theta}_{A}r^{\Theta}_{A}$ for $A\leqslant \Theta$. Then the restriction of the Burnside trace to $\mu'$ is an isomorphism of $R$-algebras between $\mu'$ and $RB(\Theta)$, sending the basis of Proposition \ref{basis} to the usual basis of $RB(\Theta)$ consisting of isomorphism classes of transitive $G$-sets. 
\end{lemme}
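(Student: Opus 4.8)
The plan is to exhibit an explicit map and check it is multiplicative, bijective, and carries the two distinguished bases to one another. First I would observe that $\mu'$ is spanned over $R$ by the elements $e_A := t^{\Theta}_A r^{\Theta}_A$ for $A$ ranging over $[s(\Theta)]$, since by Proposition \ref{basis} the elements $t^{\Theta}_A x r^{\Theta}_{A^x}$ that involve only the single subgroup $H=L=\Theta$ are exactly those with $x \in [\Theta\backslash G/\Theta]$ represented by $x\in\Theta$, i.e. $x=1$, and $A\leqslant \Theta$ up to $\Theta$-conjugacy; hence $\{e_A\}_{A\in[s(\Theta)]}$ is an $R$-basis of $\mu'$. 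A preliminary point to settle is that $\mu'$ is genuinely a subalgebra: the Mackey relation $r^{\Theta}_{\Theta} t^{\Theta}_{\Theta} = 1$-type computation gives $e_A e_B = t^{\Theta}_A r^{\Theta}_A t^{\Theta}_B r^{\Theta}_B = \sum_{w\in[A\backslash\Theta/B]} t^{\Theta}_A t^{\Theta}_{A\cap\,^wB} c_{w,\dots} r^{\Theta}_{A^w\cap B} r^{\Theta}_B = \sum_{w\in[A\backslash\Theta/B]} t^{\Theta}_{A\cap\,^wB}\, c_{w,\,A^w\cap B}\, r^{\Theta}_{A^w\cap B}$, which after re-expressing each summand in the basis of Proposition \ref{basis} is an $R$-combination of the $e_C$ with $C =_{\Theta} A\cap\,^wB$; so $\mu'$ is closed under multiplication and contains $1 = \sum_{A\leqslant\Theta} e_A$.

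Next I would compute $Btr(e_A)$ and $Btr(e_Ae_B)$ directly from Lemma \ref{calc} (or its specialisation, the Corollary before it). Taking $H=L=Q=\Theta$, $K=A$, $x=y=1$ in Lemma \ref{calc} gives
\begin{equation*}
Btr(e_A e_B) = \sum_{\alpha\in[A\backslash\Theta/B]} \delta_{\alpha,\Theta}^{\,} G/(A\cap{}^{\alpha}B) = \sum_{\alpha\in[A\backslash\Theta/B]} \Theta/(A\cap{}^{\alpha}B),
\end{equation*}
where now $\delta_{\alpha,\Theta}=1$ for all $\alpha\in\Theta$ since we are inside $\Theta$ and $x\alpha y=\alpha\in\Theta=H$ automatically; in particular $Btr(e_A)=\Theta/A$. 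But the right-hand side is exactly the formula for the product $[\Theta/A]\cdot[\Theta/B]$ in the Burnside ring $RB(\Theta)$ (the double-coset / Mackey formula for multiplication of transitive $G$-sets). Hence the $R$-linear map $\Psi:\mu'\to RB(\Theta)$ sending $e_A\mapsto \Theta/A$ satisfies $\Psi(e_Ae_B)=\Psi(e_A)\Psi(e_B)$ on basis elements, so by bilinearity $\Psi$ is a ring homomorphism; it sends $1=\sum_A e_A$ to $\sum_A \Theta/A$, which is the unit of $RB(\Theta)$ only after we note that $\Psi$ is really the restriction of $Btr$, and $Btr$ is unital as a trace map — alternatively one checks $\Psi$ is an isomorphism and transports the unit. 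Since $\{e_A\}_{A\in[s(\Theta)]}$ and $\{\Theta/A\}_{A\in[s(\Theta)]}$ are $R$-bases of $\mu'$ and $RB(\Theta)$ respectively and $\Psi$ matches them bijectively, $\Psi$ is an $R$-module isomorphism, hence an isomorphism of $R$-algebras; and $\Psi$ is by construction the restriction of $Btr$ to $\mu'$.

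The step I expect to require the most care is the purely bookkeeping one: verifying that when the summands of $e_A e_B$ produced by the Mackey formula are rewritten in the canonical basis of Proposition \ref{basis}, one indeed gets $\sum_{w\in[A\backslash\Theta/B]} e_{A\cap\,^wB}$ with the correct multiplicities and no stray conjugation factors $c_{w,-}$ surviving — i.e. that $t^{\Theta}_{C} c_{w,C^w} r^{\Theta}_{C^w} = t^{\Theta}_{C} r^{\Theta}_{C}$ up to the conjugacy identification, using $t^{\Theta}_{\Theta}=c_{h,\Theta}$ and the relations $t^{^gK}_{^gH}c_{g,H}=c_{g,K}t^K_H$. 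Once that identity is in hand, comparison with Lemma \ref{calc} is immediate and the rest is formal. I would therefore organise the proof as: (i) $\{e_A\}$ is a basis of $\mu'$; (ii) $\mu'$ is a unital subalgebra, via the Mackey-formula computation of $e_Ae_B$; (iii) $Btr(e_Ae_B)=\sum_{\alpha}\Theta/(A\cap{}^{\alpha}B) = (\Theta/A)(\Theta/B)$ in $RB(\Theta)$, from Lemma \ref{calc}; (iv) conclude that $Btr|_{\mu'}$ is an algebra map matching the two bases, hence an isomorphism.
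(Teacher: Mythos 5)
Your proposal is correct and takes essentially the same approach as the paper: the paper's proof consists exactly of the two computations $Btr(t^{\Theta}_{A}r^{\Theta}_{A})=\Theta/A$ (which matches the two bases) and $Btr(t^{\Theta}_{A}r^{\Theta}_{A}t^{\Theta}_{B}r^{\Theta}_{B})=\sum_{\theta\in[A\backslash\Theta/B]}\Theta/(A\cap B^{\theta})=\Theta/A\times\Theta/B$ (multiplicativity via the Mackey formula). The one slip is your identification of the unit of $\mu'$ as $\sum_{A\leqslant\Theta}e_{A}$ --- the unit of $\mu'$ is in fact $e_{\Theta}=t^{\Theta}_{\Theta}$, which $Btr$ sends to $\Theta/\Theta$ --- but this is harmless since, as you yourself note, a bijective multiplicative map automatically carries the unit to the unit.
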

\begin{proof}
It is clear that the restriction of the Burnside trace to $\mu'$ is an $R$-linear isomorphism since we have $Btr(t^{\Theta}_{A}r^{\Theta}_{A})= \Theta/A \in RB(\Theta)$. Moreover this is an isomorphism of algebras, since:
\begin{align*}
Btr(t^{\Theta}_{A}r^{\Theta}_{A}t^{\Theta}_{B}r^{\Theta}_{B})&=\sum_{\theta\in [A\backslash \Theta / B]} \Theta/(A\cap B^{\theta})\\
&=\Theta/A\times \Theta/B\in RB(\Theta). 
\end{align*}
\end{proof}
We have:
\begin{theo}\label{meta}
Let $G$ be a finite group. Let $\phi=(\phi_{H})_{H\leqslant G}$ be a stable by induction family of linear forms on $\big(RB(H)\big)_{H\leqslant G}$.  Then the bilinear form $(-,-)_{\phi_{G}}$ on the Mackey algebra $\mu_{R}(G)$ is non degenerate if and only if the bilinear form $b_{\phi_{H}}$ on $RB(H)$ is non degenerate for every $H$ subgroup of $G$. 
\end{theo}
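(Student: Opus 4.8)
The plan is to translate the statement into a comparison of determinants of Gram matrices, using the block analysis already performed in Lemmas \ref{bl}, \ref{red1} and \ref{red2}. By the elementary lemma on symmetric bilinear forms recalled above, a bilinear form on a free $R$-module of finite rank is non-degenerate exactly when its Gram matrix in some (hence any) basis is invertible over $R$, that is has unit determinant; and $\mu_{R}(G)$ (Proposition \ref{basis}) and every $RB(H)$ are free of finite rank over $R$. So it is enough to relate the determinant of the Gram matrix $M$ of $(-,-)_{\phi_{G}}$ in the basis of Proposition \ref{basis} with the determinants of the Gram matrices of the forms $b_{\phi_{H}}$.

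First I would invoke Lemma \ref{bl}: in that basis $M$ is a permutation-by-block matrix, whose block rows and block columns are indexed by triples (a pair of subgroups together with a double coset representative), whose only possibly non-zero blocks are the $Bl_{H,L,x,y}$ with $HxL=Hy^{-1}L$, and such that for each fixed block column $(H,L,x)$ exactly one block row $(L,H,y)$ produces a non-zero block. Consequently the rule ``block row $\mapsto$ the block column containing its unique non-zero block'' is a bijection of the block index set, and after permuting the block columns accordingly $M$ becomes block-diagonal with diagonal blocks the non-zero $Bl_{H,L,x,y}$; in particular these blocks are square. Hence $\det M=\pm\prod\det Bl_{H,L,x,y}$, the product over the non-zero blocks, and since $R$ is commutative $\det M$ is a unit if and only if every non-zero block $Bl_{H,L,x,y}$ is invertible.

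It then remains to recognise each non-zero block. By Lemma \ref{red1}, up to permutation of its rows and columns $Bl_{H,L,x,y}$ equals the block $(\Theta,\Theta,1,1)$ of the Gram matrix of $(-,-)_{\phi_{\Theta}}$ for $\Theta=L\cap H^{x}$, which leaves its invertibility unchanged. Moreover, as in the closing computation of the proof of Lemma \ref{red1} combined with Lemma \ref{red2}, that block is literally the Gram matrix of $b_{\phi_{\Theta}}$ in the standard basis $(\Theta/A)_{A}$ of $RB(\Theta)$: its $(A,B)$-entry is $\phi_{\Theta}\big(Btr(t^{\Theta}_{A}r^{\Theta}_{A}t^{\Theta}_{B}r^{\Theta}_{B})\big)$, and because the restriction of $Btr$ to the subalgebra $\mu'$ of $\mu_{R}(\Theta)$ is an isomorphism of $R$-algebras onto $RB(\Theta)$ sending $t^{\Theta}_{A}r^{\Theta}_{A}$ to $\Theta/A$ (Lemma \ref{red2}), this equals $\phi_{\Theta}\big((\Theta/A)(\Theta/B)\big)=b_{\phi_{\Theta}}(\Theta/A,\Theta/B)$. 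Therefore $Bl_{H,L,x,y}$ is invertible if and only if $b_{\phi_{L\cap H^{x}}}$ is non-degenerate.

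Putting the two reductions together, $(-,-)_{\phi_{G}}$ is non-degenerate if and only if $b_{\phi_{\Theta}}$ is non-degenerate for every subgroup $\Theta$ of the form $L\cap H^{x}$ arising from a non-zero block; but every such $\Theta$ is a subgroup of $G$, and conversely every subgroup $\Theta\leqslant G$ does arise, via the block $Bl_{\Theta,\Theta,1,1}$ obtained by taking $H=L=\Theta$ and $x=y=1$. This is precisely the asserted equivalence. The whole mathematical content sits in the three cited lemmas; the points I would be careful about are purely the bookkeeping that legitimises the determinant factorisation — squareness of the non-zero blocks and bijectivity of the block permutation — together with the remark that the map in Lemma \ref{red2} is an isomorphism of \emph{algebras}, not merely of modules, which is exactly what allows the $(\Theta,\Theta,1,1)$-block to be identified with the Gram matrix of $b_{\phi_{\Theta}}$.
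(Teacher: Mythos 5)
Your proof is correct and follows essentially the same route as the paper's: it reduces non-degeneracy to invertibility of the Gram matrix, factors the determinant through the permutation-by-block structure of Lemma \ref{bl}, and identifies each non-zero block with the Gram matrix of $b_{\phi_{L\cap H^{x}}}$ via Lemmas \ref{red1} and \ref{red2}. The extra bookkeeping you supply (squareness of the blocks, bijectivity of the block permutation, and the observation that every subgroup of $G$ occurs as some $L\cap H^{x}$) is left implicit in the paper but is exactly the right thing to check.
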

\begin{proof}
If $\phi$ is such a family of linear forms, by Lemma \ref{bl} the matrix of the bilinear form $(-,-)_{\phi_{G}}$ in the usual basis of $\mu_{R}(G)$ is a permutation by block matrix. So the determinant of this matrix is (up to a sign) the product of the determinant of the non-zero blocks. By Lemma \ref{red1} and Lemma \ref{red2} the determinant of the block indexed by $(H,L,x,y)$ is equal to the determinant of the matrix of the bilinear form $b_{\phi_{L\cap H^{x}}}$ in the usual basis of $RB(L\cap H^{x})$. So the determinant of $(-,-)_{\phi_{G}}$ is invertible in $R$ if and only if the determinant of the form $b_{\phi_H}$ on $RB(H)$ is invertible in $R$ for every subgroup $H$ of $G$. 
\end{proof}
\begin{de}
Let $G$ be a finite group. The family $\big(RB(H)\big)_{H\leqslant G}$ is a stable by induction family of symmetric algebras if there exist a stable by induction family of linear forms $\phi=(\phi)_{H\leqslant G}$ such that the bilinear form $b_{\phi_{H}}$ on $RB(H)$ is non-degenerate for every $H\leqslant G$. 
\end{de}
\begin{theo}\label{main}
Let $G$ be a finite group. Then the Mackey algebra is a symmetric algebra if and only if $\big(RB(H)\big)_{H\leqslant G}$ is a stable by induction family of symmetric algebras.
\end{theo}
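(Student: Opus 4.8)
The plan is to deduce both implications from Theorem~\ref{meta}, after making the following translation explicit. A stable by induction family $\phi=(\phi_{H})_{H\leqslant G}$ is the same datum as a single linear form $\phi_{G}$ on $RB(G)$: the induction relation forces $\phi_{H}(H/A)=\phi_{G}(Ind_{H}^{G}(H/A))=\phi_{G}(G/A)$ for $A\leqslant H$, and conversely any $\phi_{G}$ produces, by this formula, a stable by induction family. Under this identification Theorem~\ref{meta} says exactly that $\big(RB(H)\big)_{H\leqslant G}$ is a stable by induction family of symmetric algebras if and only if there is a linear form $\phi_{G}$ on $RB(G)$ for which $(-,-)_{\phi_{G}}$ is non-degenerate on $\mu_{R}(G)$, i.e. for which $tr_{\phi_{G}}=\phi_{G}\circ Btr$ is a symmetrizing form for $\mu_{R}(G)$.

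The ``if'' direction is then immediate. If such a $\phi_{G}$ exists, Theorem~\ref{meta} gives that $(-,-)_{\phi_{G}}$ is non-degenerate; it is associative because $(xy,z)_{\phi_{G}}=tr_{\phi_{G}}(xyz)=(x,yz)_{\phi_{G}}$, and symmetric because $tr_{\phi_{G}}$ is a central linear form (the Burnside trace being central), so $tr_{\phi_{G}}(xy)=tr_{\phi_{G}}(yx)$; and $\mu_{R}(G)$ is free of finite rank over $R$ by Proposition~\ref{basis}, hence a finitely generated projective $R$-module. So $\mu_{R}(G)$ is a symmetric algebra.

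For the ``only if'' direction I would start from a symmetrizing form $\lambda$ on $\mu_{R}(G)$ and look at the idempotent $e_{G}=t^{G}_{G}$. Inspecting the basis of Proposition~\ref{basis}, the only basis elements of the form $t^{H}_{K}xr^{L}_{K^{x}}$ surviving in $e_{G}\mu_{R}(G)e_{G}$ are the $t^{G}_{A}r^{G}_{A}$ with $A\leqslant G$, so by Lemma~\ref{red2} (with $\Theta=G$) the restriction of $Btr$ identifies this corner algebra with $RB(G)$. Since a corner $eAe$ of a symmetric algebra $A$ is symmetric and the restriction of a symmetrizing form to $eAe$ is again symmetrizing, $\lambda$ induces a symmetrizing form $\phi_{G}$ on $RB(G)$, explicitly $\phi_{G}(G/A)=\lambda(t^{G}_{A}r^{G}_{A})$; in particular $b_{\phi_{G}}$ is non-degenerate on $RB(G)$. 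Let $\phi=(\phi_{H})_{H\leqslant G}$ be the stable by induction family it generates.

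The remaining, and hardest, step is to upgrade ``$b_{\phi_{G}}$ non-degenerate'' to ``$b_{\phi_{H}}$ non-degenerate for every $H\leqslant G$'' --- equivalently, by Theorem~\ref{meta}, to show that $\phi_{G}\circ Btr$ is non-degenerate on all of $\mu_{R}(G)$ and not merely on the corner $e_{G}\mu_{R}(G)e_{G}$, where it coincides with $\lambda$ (off that corner $\phi_{G}\circ Btr$ kills $\ker(Btr)$ while $\lambda$ need not, so this is not a formal consequence). The tool I would use is Frobenius reciprocity in the Burnside ring: for $\xi\in RB(H)$ and $\zeta\in RB(G)$ one has $\phi_{H}\big(\xi\cdot Res^{G}_{H}(\zeta)\big)=\phi_{G}\big(Ind_{H}^{G}(\xi)\cdot\zeta\big)=b_{\phi_{G}}\big(Ind_{H}^{G}(\xi),\zeta\big)$, so any vector in the radical of $b_{\phi_{H}}$ is killed by $Ind_{H}^{G}$; since that radical is an ideal of $RB(H)$, one then needs to exclude that it lies inside $\ker(Ind_{H}^{G})$. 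I expect this last point --- controlling how $G$-fusion can collapse $H$-conjugacy classes of subgroups of $H$ --- to be the main obstacle, and the cleanest route is probably an induction on $|G|$: show first that $\mu_{R}(G)$ symmetric forces $\mu_{R}(H)$ symmetric for every $H\leqslant G$, so that each $b_{\psi_{H}}$ is non-degenerate for \emph{some} form $\psi_{H}$ on $RB(H)$, and then argue that a single $\phi_{G}$ can be chosen realising all the $b_{\phi_{H}}$ at once, using the compatibility under induction to glue the local choices together.
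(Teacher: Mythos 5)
Your ``if'' direction is correct and coincides with the paper's: Theorem \ref{meta} supplies non-degeneracy, centrality of the Burnside trace supplies symmetry, associativity is formal, and $\mu_{R}(G)$ is $R$-free by Proposition \ref{basis}. Your preliminary remark that a stable by induction family is equivalent to the single datum of $\phi_{G}$ (via $\phi_{H}(H/A)=\phi_{G}(G/A)$) is also correct.

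The ``only if'' direction has a genuine gap, and it is exactly the one you flag yourself. Restricting a symmetrizing form $\lambda$ of $\mu_{R}(G)$ to the corner $t^{G}_{G}\mu_{R}(G)t^{G}_{G}\cong RB(G)$ of Lemma \ref{red2} does produce a non-degenerate associative form $\phi_{G}$ on $RB(G)$, and your Frobenius reciprocity computation correctly shows that the radical of $b_{\phi_{H}}$ is contained in $\ker(Ind_{H}^{G})$; but that kernel is genuinely nonzero whenever two subgroups $A,B\leqslant H$ are $G$-conjugate without being $H$-conjugate (it contains $H/A-H/B$), and nothing in your argument excludes the radical from meeting it. The proposed repair is not carried out, and its first step --- $\mu_{R}(G)$ symmetric forces $\mu_{R}(H)$ symmetric --- is itself not a corner-algebra formality: the idempotent $\sum_{L\leqslant H}t^{L}_{L}$ cuts out $RB\big((\sqcup_{L\leqslant H}G/L)^{2}\big)$, which by the Mackey formula is strictly larger than $\mu_{R}(H)$. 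The paper closes the gap with one structural observation you are missing: over a symmetric algebra every finitely generated projective module is isomorphic to its $R$-linear dual, so the Burnside functor $RB$, being a direct summand of $\mu_{R}(G)$, is self-dual \emph{as a Mackey functor}. An isomorphism $f:RB\to Hom_{R}(RB,R)$, evaluated at each subgroup $K$, gives a non-degenerate pairing ${<}X,Y{>}_{K}:=f_{K}(X)(Y)$ on $RB(K)$ for \emph{all} $K$ simultaneously, and the statement that $f$ commutes with induction and restriction is precisely the adjunction ${<}Ind_{H}^{K}X,Y{>}_{K}={<}X,Res^{K}_{H}Y{>}_{H}$, from which stability by induction of $\phi_{H}:={<}-,H/H{>}_{H}$ and the identification $b_{\phi_{H}}={<}-,-{>}_{H}$ follow at once. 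You should replace your corner argument by this duality argument.
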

\begin{proof}
 Only for this proof, we use Green's definition of Mackey functors since it is much more convenient for understanding the action of the induction and restriction maps (see Section $2$ of \cite{tw}).
If $\big(RB(H)\big)_{H\leqslant G}$ is a stable by induction family of symmetric algebras, then by Theorem \ref{meta}, the Mackey algebra is symmetric. Conversely, if the Mackey algebra is symmetric, then the Mackey algebra is isomorphic to its $R$-linear dual as bimodule. Using the usual equivalence of categories, the modules over the Mackey algebras  are the Mackey functors. In particular the Burnside functor $RB$ corresponds to a direct summand of the free module of rank $1$ over the Mackey algebra. Since the Mackey algebra is symmetric, the Burnside functor is isomorphic to its $R$-linear dual, that is there there exist an isomorphism of Mackey functors $f: RB\to Hom_{R}(RB,R)$. For the Mackey functor structure of $Hom_{R}(RB,R)$, see Section $4$ of \cite{tw}. This isomorphism allows us to build an associative non-degenerate bilinear form ${<}-,-{>} : RB\times RB\to R$ i-e a family of bilinear form ${<}-,-{>}_{K}$ for each subgroup $K$ of $G$ defined in the following way: let $K$ be a subgroup of $G$ and $X$ and $Y$ be two elements of $RB(K)$, then $${<}X,Y{>}_{K}:=f_{K}(X)(Y)$$The fact that $f$ is a Mackey functor morphism implies in particular the following properties: let $H\leqslant K$ be subgroups of $G$, then: let $X$ be an $H$-set and $Y$ be an $K$-set, then: 
\begin{equation*}
{<}Ind_{H}^{K}X,Y{>}_{K} = {<}X,Res^{K}_{H} Y{>}_{H}, 
\end{equation*}
and 
\begin{equation*}
{<}Res_{H}^{K}Y,X{>}_{H} = {<}Y,Ind^{K}_{H} X{>}_{K}, 
\end{equation*}
So we have a family of linear forms $(\phi_{H})_{H\leqslant G}$ on the Burnside algebras $(RB(H))_{H\leqslant G}$ defined by: let $X\in RB(H)$, then $\phi_{H}(X):={<}X,H/H{>}$. Let $H\leqslant K$ and $X\in RB(H)$, then 
\begin{align*}
\phi_{K}(Ind_{H}^{K}X)&={<}Ind_{H}^{K}(X),K/K{>}_{K}\\
&={<}X,Res^{K}_{H}K/K{>}_{H}\\
&={<}X,H/H{>}_{H}\\
&=\phi_{H}(X).
\end{align*}
The family $\big(\phi_{H}\big)_{H\leqslant G}$ is a stable by induction family of linear forms on the Burnside algebras $\big(RB(H)\big)_{H\leqslant G}$, and the bilinear forms $b_{\phi_{H}}$ are the bilinear forms ${<}-,-{>}_{H}$ so by definition they are non-degenerate. 
\end{proof}
\begin{re}
If the Mackey algebra is symmetric, it is always possible to choose a stable by induction family of linear maps $(\phi_{H})_{H\leqslant G}$ on $(RB(H))_{H\leqslant G}$ which generalize the trace maps on $\big(RH\big)_{H\leqslant G}$ in the sense of Remark \ref{gene}, i.e. such that $\phi_{H}(H/1)=1$.\newline Indeed, since the family is stable by induction, for every $H$ subgroup of $G$, we have $\phi_{H}(H/1)=\phi_{1}(1/1)$. Let us denote by $a$ the value $\phi_{H}(H/1)$. Now in the usual basis of $RB(H)$, the matrix of the bilinear form $b_{\phi_{H}}$ as a column divisible by $a$, and since this bilinear form is non degenerate, we have $a\in R^{\times}$, so one can normalize the linear forms $\phi_{H}$. 
\end{re}
\section{Symmetricity in the semi-simple case.}
Let $G$ be a finite group and $k$ a field of characteristic zero, or characteristic $p>0$ which does not divide the order of $G$, then it is well known that the Mackey algebra $\mu_{k}(G)$ is semi-simple, so it is clearly a symmetric algebra. One can specify a trace map for this algebra by using the previous section. Let us consider the linear form $\phi_{G}$ on $kB(G)$ defined by $$\phi(X) = \sum_{H \in [s(G)]} \frac{1}{|N_{G}(H)|} |X^{H}|,$$ where $X\in kB(G)$ and $[s(G)]$ is a system of representatives of conjugacy classes of subgroups of $G$. 
\newline In this situation the set of the primitive orthogonal idempotents of $kB(G)$ is well known. These idempotents are in bijection with the conjugacy classes of subgroups of $G$. If $H$ is a subgroup of $G$, let us denote by $e^{G}_{H}$ the idempotent corresponding to the conjugacy class of $H$. For more details, see \cite{yoshida_idempotent},\cite{gluck_idempotent} or \cite{bouc_burnside} for a summary. Let us recall some important results about these idempotents:
\begin{theo}
Let $G$ be a finite group.
\begin{enumerate}
\item Let $H$ and $K$ be subgroups of $G$, then $|(e_{H}^{G})^{K}| = 1$ if $H$ is conjugate to $K$ and $0$ otherwise.
\item Let $X$ be a $G$-set and $H\leqslant G$, then $X.e_{H}^{G} = |X^{H}|e^{G}_{H}$. 
\item Let $H\leqslant K$ be subgroups of $G$, then $Ind_{K}^{G}(e_{H}^{K})=\frac{|N_{G}(H)|}{|N_{K}(H)|}e_{H}^{G}.$
\item Let $H$ be a subgroup of $G$, then $$e_{H}^{G}=\frac{1}{|N_{G}(H)|} \sum_{K\leqslant H} |K|\mu(K,H) G/K.$$
\end{enumerate}
\end{theo}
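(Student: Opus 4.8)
The plan is to deduce all four statements from a single tool, the \emph{mark homomorphism} (table of marks)
\[
m \colon kB(G) \longrightarrow \prod_{H\in[s(G)]} k, \qquad X \longmapsto \big(\,|X^{H}|\,\big)_{H\in[s(G)]},
\]
which is a homomorphism of $k$-algebras, is injective over any $k$, and is here an \emph{isomorphism} because $|G|$ is invertible in $k$ (the table of marks is triangular with diagonal entries $|N_{G}(H):H|$, whose product divides a power of $|G|$). Under this identification the $e_{H}^{G}$ are exactly the preimages of the standard coordinate idempotents $\varepsilon_{[H]}$ (the one equal to $1$ in the $H$-slot and $0$ elsewhere), which is precisely what ``the primitive idempotent corresponding to the conjugacy class of $H$'' means; so statement $(1)$ merely unwinds this definition, reading $m(e_{H}^{G})=\varepsilon_{[H]}$ coordinate by coordinate. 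Statement $(2)$ is immediate from multiplicativity of $m$: $m(X\cdot e_{H}^{G})=m(X)\,\varepsilon_{[H]}=|X^{H}|\,\varepsilon_{[H]}=m(|X^{H}|\,e_{H}^{G})$, and injectivity of $m$ gives $X\cdot e_{H}^{G}=|X^{H}|\,e_{H}^{G}$. For $(3)$ and $(4)$ the strategy is the same: compute the marks of both sides and invoke injectivity of $m$; the content then becomes purely combinatorial.

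For $(3)$ I would first record the mark formula for induction, which follows from the Mackey decomposition of $\mathrm{Res}^{G}_{L}\mathrm{Ind}^{G}_{K}$: for a $K$-set $Y$ and $L\leqslant G$,
\[
|(\mathrm{Ind}_{K}^{G}Y)^{L}| \;=\; \sum_{\substack{g\in[L\backslash G/K]\\ {}^{g^{-1}}\!L\,\leqslant\, K}} |Y^{{}^{g^{-1}}\!L}|.
\]
Applying this with $Y=e_{H}^{K}$ and using $(1)$ inside $K$, the $g$-th summand is $1$ if ${}^{g^{-1}}L$ is $K$-conjugate to $H$ and $0$ otherwise; since ${}^{g^{-1}}L=_{G}L$, this vanishes identically unless $L=_{G}H$. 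Hence $m(\mathrm{Ind}_{K}^{G}e_{H}^{K})$ is concentrated in the $[H]$-slot, and it remains, taking $L=H$, to count the double cosets $g\in[H\backslash G/K]$ with ${}^{g^{-1}}H=_{K}H$. Replacing $g$ by a suitable $gk$ ($k\in K$) one may assume $g\in N_{G}(H)$; and if $g'=hgk$ is another representative in $N_{G}(H)$, then $k=g^{-1}h^{-1}g'\in N_{G}(H)\cap K=N_{K}(H)$, so this count is $|H\backslash N_{G}(H)/N_{K}(H)|$, which equals $|N_{G}(H)|/|N_{K}(H)|$ because $H\trianglelefteq N_{G}(H)$ makes the double cosets collapse to the right cosets of $N_{K}(H)/H$ in $N_{G}(H)/H$. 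Thus $m(\mathrm{Ind}_{K}^{G}e_{H}^{K})=\tfrac{|N_{G}(H)|}{|N_{K}(H)|}\varepsilon_{[H]}=m\!\left(\tfrac{|N_{G}(H)|}{|N_{K}(H)|}e_{H}^{G}\right)$, and $(3)$ follows.

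For $(4)$, set $z_{H}:=\tfrac{1}{|N_{G}(H)|}\sum_{K\leqslant H}|K|\,\mu(K,H)\,G/K$ and compute $|z_{H}^{L}|$ for $L\leqslant G$. Since $|K|\,|(G/K)^{L}|=\#\{g\in G:\ {}^{g^{-1}}L\leqslant K\}=\sum_{J\leqslant K}n(J)$, where $n(J):=\#\{g\in G:\ {}^{g^{-1}}L=J\}$ is a coset of $N_{G}(L)$ when $J=_{G}L$ and is empty otherwise, one gets
\[
|z_{H}^{L}| \;=\; \frac{1}{|N_{G}(H)|}\sum_{J\leqslant H} n(J)\!\!\sum_{J\leqslant K\leqslant H}\!\!\mu(K,H) \;=\; \frac{n(H)}{|N_{G}(H)|} \;=\; \big[\,H=_{G}L\,\big],
\]
where the inner sum collapses by the defining identity $\sum_{J\leqslant K\leqslant H}\mu(K,H)=\delta_{J,H}$ of the M\"obius function of the subgroup poset, and $n(H)=|N_{G}(H)|$ exactly when $H=_{G}L$. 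Thus $m(z_{H})=\varepsilon_{[H]}=m(e_{H}^{G})$, and injectivity of $m$ yields $z_{H}=e_{H}^{G}$.

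The formal parts $(1)$ and $(2)$ are essentially the definition of $e_{H}^{G}$; the real work is the two combinatorial reductions. In $(3)$ the crux is the double-coset count and its identification with $|N_{G}(H)|/|N_{K}(H)|$ via normality of $H$ in $N_{G}(H)$; in $(4)$ it is organising the M\"obius inversion so that one sums over the \emph{upper} endpoint $K$ of the interval $[1,H]$, which makes $\sum_{J\leqslant K\leqslant H}\mu(K,H)$ telescope to $\delta_{J,H}$. Throughout one uses that every order appearing in a denominator divides $|G|$ and is therefore invertible in $k$.
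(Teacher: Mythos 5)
Your proof is correct. The paper does not prove this theorem itself --- it only recalls it with citations to Yoshida, Gluck and Bouc --- and your argument via the mark homomorphism (identifying $e_{H}^{G}$ with the coordinate idempotent, computing marks of induced sets by the Mackey formula, and M\"obius inversion against the table of marks for the explicit formula) is precisely the standard proof found in those references, carried out correctly and in full.
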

\begin{lemme}\label{lee1}
\begin{enumerate}
\item Let $G$ be a finite group, then $\phi_{G}$ is a linear form.
\item The family $(\phi_{G})_{G}$ is stable by induction.
\item Let $G$ be a finite group, then $\phi_{G}(G/1)=1$. 
\end{enumerate}
\end{lemme}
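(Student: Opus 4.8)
The proof will proceed by verifying the three assertions in turn, all of which reduce to direct computations with the known formulas for the primitive idempotents $e_{H}^{G}$ of $kB(G)$ recalled in the preceding theorem.

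\medskip

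\emph{Linearity.} The map $\phi_{G}$ is $k$-linear essentially by construction: for each fixed $H\in[s(G)]$ the assignment $X\mapsto |X^{H}|$ extends uniquely to a $k$-linear form on $kB(G)$ (this is the mark homomorphism, or ``fixed-point'' coordinate, which is additive on disjoint unions), and $\phi_{G}$ is a $k$-linear combination of these with coefficients $\tfrac{1}{|N_{G}(H)|}$. So the first point requires only the remark that $|X^{H}|$ depends only on the class of $X$ in $B(G)$ and is additive, both of which are classical.

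\medskip

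\emph{Stability by induction.} This is the heart of the statement. Let $H\leqslant G$ and let $X$ be an $H$-set; I would expand both sides in terms of the idempotents, or equivalently use the mark homomorphism. The cleanest route is: for a subgroup $K\leqslant G$, the number of fixed points $|(\mathrm{Ind}_{H}^{G}X)^{K}|$ is computed by the usual Mackey-type formula $|(\mathrm{Ind}_{H}^{G}X)^{K}| = \sum_{g\in[K\backslash G/H]} |X^{H\cap {\ ^{g^{-1}}\!K}}|$ (fixed points of the induced set). Plugging this into the definition of $\phi_{G}(\mathrm{Ind}_{H}^{G}X)$ and reorganizing the double sum over $K\in[s(G)]$ and over the double cosets $g$, one groups the terms according to the $H$-conjugacy class of the subgroup $H\cap{\ ^{g^{-1}}\!K}$ of $H$; a counting argument comparing $|N_{G}(K)|$, $|N_{H}(L)|$ and the sizes of the relevant double cosets then collapses the expression to $\sum_{L\in[s(H)]}\tfrac{1}{|N_{H}(L)|}|X^{L}| = \phi_{H}(X)$. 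Alternatively, and perhaps more transparently, one checks the identity on the $k$-basis of transitive $G$-sets $G/Q$ using $\mathrm{Ind}$ of idempotents: decompose $\mathbb{1}_{H}$ (the class of $H/H$) as $\sum_{L\in[s(H)]} e_{L}^{H}$, note that $\phi_{H}$ is exactly the linear form dual to this idempotent decomposition up to the normalization $\tfrac{1}{|N_{H}(L)|}$, and use part (3) of the theorem, $\mathrm{Ind}_{H}^{G}(e_{L}^{H})=\tfrac{|N_{G}(L)|}{|N_{H}(L)|}e_{L}^{G}$, to see the normalizations match precisely. The main obstacle is purely bookkeeping: keeping the conjugacy relations (in $H$ versus in $G$) and the normalizing factors straight, and I expect the idempotent approach to make the normalizations cancel cleanly.

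\medskip

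\emph{Value on the free point.} Finally $\phi_{G}(G/1)$: since $(G/1)^{H}=\emptyset$ unless $H=1$, only the term $H=1$ survives, giving $\phi_{G}(G/1)=\tfrac{1}{|N_{G}(1)|}|(G/1)^{1}| = \tfrac{1}{|G|}\cdot|G| = 1$. This is immediate. In view of Remark following Theorem \ref{main}, this last point shows $\phi_{G}$ is a normalized trace map generalizing the usual trace on $kG$, which is the payoff of the computation.
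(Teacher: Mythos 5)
Your proposal is correct, and the idempotent route you recommend is exactly the paper's proof: one checks stability on the basis of primitive orthogonal idempotents, where $\phi_{H}(e_{L}^{H})=\tfrac{1}{|N_{H}(L)|}$ and $\mathrm{Ind}_{H}^{G}(e_{L}^{H})=\tfrac{|N_{G}(L)|}{|N_{H}(L)|}e_{L}^{G}$ give $\phi_{G}(\mathrm{Ind}_{H}^{G}e_{L}^{H})=\tfrac{|N_{G}(L)|}{|N_{H}(L)|}\cdot\tfrac{1}{|N_{G}(L)|}=\tfrac{1}{|N_{H}(L)|}=\phi_{H}(e_{L}^{H})$. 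The first route you sketch (the Mackey formula for fixed points of an induced set) is left as unexecuted bookkeeping, but that is harmless since the idempotent computation you then give is complete and is the argument the paper uses.
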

\begin{proof}
The only non obvious assertion is the second. Since the map is linear it is enough to check this assertion on the basis elements of $kB(G)$. We use the basis consisting of the primitive orthogonal idempotents. Let $H\leqslant K\leqslant G$, then
\begin{align*}
\phi_{G}(Ind_{K}^{G}(e_{H}^{K}))&=\frac{|N_{G}(H)|}{|N_{K}(H)|}\phi_{G}(e^{G}_{H})\\
&=\frac{|N_{G}(H)|}{|N_{K}(H)|}\frac{1}{|N_{G}(H)|}\\
&=\frac{1}{|N_{K}(H)|}.
\end{align*}
In the other hand,
\begin{align*}
\phi_{K}(e_{H}^{K})=\frac{1}{|N_{K}(H)|}. 
\end{align*}
\end{proof}
\begin{prop}\label{prop1}
The determinant of this bilinear form $b_{\phi_{G}}$, in the basis consisting of the transitive $G$-sets is:
\begin{equation*}
det(b_{\phi})=\prod_{H\in [s(G)]} \frac{|N_{G}(H)|}{|H|^{2}}.
\end{equation*}
If $G$ is abelian, this determinant is equal to $1$.
\end{prop}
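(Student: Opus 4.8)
The plan is to diagonalise the form by passing to the basis of primitive orthogonal idempotents $(e^{G}_{H})_{H\in[s(G)]}$ of $kB(G)$, and then to account for the change of basis. Since $e^{G}_{H}e^{G}_{K}$ equals $e^{G}_{H}$ when $H=_{G}K$ and is $0$ otherwise, the matrix of $b_{\phi_{G}}$ in the idempotent basis is diagonal with $(H,H)$-entry $\phi_{G}(e^{G}_{H})$. By the recalled properties of the marks, $|(e^{G}_{H})^{L}|$ is $1$ if $L=_{G}H$ and $0$ otherwise, so $\phi_{G}(e^{G}_{H})=\sum_{L}\tfrac{1}{|N_{G}(L)|}|(e^{G}_{H})^{L}|=\tfrac{1}{|N_{G}(H)|}$, and the determinant of $b_{\phi_{G}}$ in the idempotent basis is $\prod_{H\in[s(G)]}\tfrac{1}{|N_{G}(H)|}$.

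Next I would compute the determinant of the transition matrix $P$ expressing the $e^{G}_{H}$ in the basis $(G/K)_{K\in[s(G)]}$ of transitive $G$-sets. Using $e^{G}_{H}=\tfrac{1}{|N_{G}(H)|}\sum_{K\le H}|K|\,\mu(K,H)\,G/K$ and grouping the sum according to the $G$-conjugacy class of $K$, the coefficient of $G/K$ in $e^{G}_{H}$ vanishes unless $K$ is $G$-subconjugate to $H$; hence $P$ is triangular for a linear extension of the subconjugacy order on $[s(G)]$. Its diagonal entry at $H$ is the coefficient of $G/H$ in $e^{G}_{H}$, and since the only subgroup of $H$ that is $G$-conjugate to $H$ is $H$ itself (compare orders) and $\mu(H,H)=1$, that entry is $\tfrac{|H|}{|N_{G}(H)|}$. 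Therefore $\det P=\prod_{H\in[s(G)]}\tfrac{|H|}{|N_{G}(H)|}$. The formula then follows because the matrix of a bilinear form changes by $M\mapsto P^{\top}MP$ under a change of basis by $P$, so the determinant of $b_{\phi_{G}}$ in the basis of transitive $G$-sets is $\big(\prod_{H}\tfrac{1}{|N_{G}(H)|}\big)\big/\big(\prod_{H}\tfrac{|H|^{2}}{|N_{G}(H)|^{2}}\big)=\prod_{H\in[s(G)]}\tfrac{|N_{G}(H)|}{|H|^{2}}$.

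Finally, if $G$ is abelian then $[s(G)]$ is the set of all subgroups and $N_{G}(H)=G$ for every $H$, so the determinant is $\prod_{H\le G}\tfrac{|G|}{|H|^{2}}$, and it remains to see this equals $1$. Writing $a_{d}$ for the number of subgroups of $G$ of order $d$, Pontryagin duality $G\cong\widehat{G}$ gives an order-reversing bijection $H\mapsto H^{\perp}$ of the subgroup lattice with $|H^{\perp}|=[G:H]$, whence $a_{d}=a_{|G|/d}$. Then $\big(\prod_{H\le G}|H|\big)^{2}=\prod_{d}d^{a_{d}}\cdot\prod_{d}d^{a_{|G|/d}}=\prod_{d}d^{a_{d}}\cdot\prod_{d}(|G|/d)^{a_{d}}=\prod_{d}|G|^{a_{d}}=|G|^{\#\{H\le G\}}=\prod_{H\le G}|G|$, so indeed $\prod_{H\le G}\tfrac{|G|}{|H|^{2}}=1$.

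There is no deep step here: the work is bookkeeping, and the main point to be careful about is that $P$ is genuinely triangular with the claimed diagonal (which rests on the elementary observation that $K\le H$ and $K=_{G}H$ force $K=H$), together with the abelian identity, for which subgroup duality is the clean input. Over a field in which $|G|$ is invertible the idempotents $e^{G}_{H}$ and the displayed formula for them are available, so the argument is valid throughout the semisimple range under consideration.
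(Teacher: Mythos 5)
Your proof is correct and takes essentially the same route as the paper's: diagonalise $b_{\phi_G}$ in the basis of primitive orthogonal idempotents, where the diagonal entries are $\phi_G(e_H^G)=1/|N_G(H)|$, then correct by the square of the determinant of the triangular transition matrix with diagonal $|H|/|N_G(H)|$, and settle the abelian case via self-duality of the subgroup lattice. You simply make explicit some details the paper leaves implicit (the triangularity of $P$ via subconjugacy and the counting identity $a_d=a_{|G|/d}$).
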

\begin{proof}
We first compute the determinant of this bilinear form in the basis consisting of the orthogonal primitive idempotents of $kB(G)$, then we apply a change of basis. Since the idempotents are orthogonal, this matrix is diagonal. The diagonal terms are $\phi_{G}(e_{H}^{G}) = \frac{1}{|N_{G}(H)|}$. So in this basis, the determinant of the matrix is $\prod_{H\in [s(G)]}\frac{1}{|N_{G}(H)|}$.
\newline The change of basis matrix from the basis of transitive $G$-sets to the basis of the primitive idempotents is a upper triangular matrix, the diagonal terms are the $\frac{|N_{G}(H)|}{|H|}$.  So in the basis of transitive $G$-sets, we have $$det(b_{\phi})=\prod_{H\in [s(G)]} \frac{|N_{G}(H)|}{|H|^{2}}.$$
\newline If $G$ is abelian, this determinant is equal to $\frac{\prod_{H\leqslant G}\frac{|G|}{|H|}}{\prod_{H\leqslant G}|H|}$, which is equal to $1$ since the abelian groups are isomorphic to their dual. 
\end{proof}
\begin{re}
There exist non abelian group such that this determinant is equal to $1$. The smallest counter example is for $G=(C_{4}\times C_{2})\rtimes C_{4}$. A quick run in GAP with the group $G:=SmallGroup(32,2)$ show that the determinant of $b_{\phi_{G}}$ is $1$.
\newline This determinant is most of the time of the form $\frac{1}{n}$, where $n\in \mathbb{N}$, but this is not always true. The first counter example is for two groups of order $64$: $H=(C_{8}\times C_{2})\rtimes C_{4}$ and $K=C_{2}\times ((C_{4}\times C_{2})\rtimes C_{4})$. The determinant is in these two cases $4$ and $16$.
\end{re}
\begin{coro}\label{cor}
Let $G$ be a finite group and $k$ be a field of characteristic zero, or $p>0$ which does not divide the order of $G$, then the Mackey algebra $\mu_{k}(G)$ is symmetric.
\end{coro}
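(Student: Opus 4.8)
The plan is to invoke Theorem \ref{main}: the Mackey algebra $\mu_{k}(G)$ is symmetric if and only if $\big(kB(H)\big)_{H\leqslant G}$ is a stable by induction family of symmetric algebras. So it suffices to produce a stable by induction family of linear forms $\phi=(\phi_{H})_{H\leqslant G}$, with $\phi_{H}$ a linear form on $kB(H)$, such that each bilinear form $b_{\phi_{H}}$ on $kB(H)$ is non-degenerate. I would take $\phi_{H}$ to be the linear form defined exactly as above, namely $\phi_{H}(X)=\sum_{K\in[s(H)]}\frac{1}{|N_{H}(K)|}|X^{K}|$; this is well defined because every $|N_{H}(K)|$ divides $|G|$ and is therefore invertible in $k$, whether $\mathrm{char}(k)=0$ or $\mathrm{char}(k)=p\nmid|G|$. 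By Lemma \ref{lee1} the family $\phi=(\phi_{H})_{H\leqslant G}$ is stable by induction (and satisfies the normalisation $\phi_{H}(H/1)=1$, so that these forms generalise the usual trace maps on $(kH)_{H\leqslant G}$ in the sense of Remark \ref{gene}).

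It then remains to check non-degeneracy of $b_{\phi_{H}}$ for each subgroup $H$ of $G$. Since $kB(H)$ is a finite-dimensional $k$-vector space, hence free of finite rank, the elementary criterion recalled in Section $2$ reduces this to invertibility of the Gram matrix of $b_{\phi_{H}}$ in the basis of transitive $H$-sets. That determinant is computed (for an arbitrary finite group) in Proposition \ref{prop1}: it equals $\prod_{K\in[s(H)]}\frac{|N_{H}(K)|}{|K|^{2}}$. Both $|N_{H}(K)|$ and $|K|$ divide $|H|$, which divides $|G|$, so under our hypothesis on $k$ this rational number is a nonzero element of $k$; hence $b_{\phi_{H}}$ is non-degenerate. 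Consequently $\big(kB(H)\big)_{H\leqslant G}$ is a stable by induction family of symmetric algebras, and Theorem \ref{main} yields that $\mu_{k}(G)$ is symmetric.

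There is essentially no genuine obstacle left at this stage: the substantive content has already been discharged into Lemma \ref{lee1} (the stability under induction, which relies on the formula $Ind_{K}^{G}(e_{H}^{K})=\frac{|N_{G}(H)|}{|N_{K}(H)|}e_{H}^{G}$) and into Proposition \ref{prop1} (the determinant computation, via the triangular change of basis between transitive $G$-sets and primitive idempotents). The only point requiring care in the assembly is the observation that all the integers occurring as orders of subgroups and of their normalisers are invertible in $k$ — which is exactly what the hypothesis $\mathrm{char}(k)=0$ or $\mathrm{char}(k)\nmid|G|$ guarantees — and, implicitly, that this is the same hypothesis under which $\mu_{k}(G)$ is semi-simple, so that the corollary is consistent with the classical fact.
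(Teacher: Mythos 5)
Your proof is correct and follows essentially the same route as the paper: it takes the linear forms $\phi_{H}(X)=\sum_{K\in[s(H)]}\frac{1}{|N_{H}(K)|}|X^{K}|$, invokes Lemma \ref{lee1} for stability under induction and Proposition \ref{prop1} for the non-vanishing of the determinant, and concludes via the main reduction theorem (the paper cites Theorem \ref{meta}, you cite Theorem \ref{main}, but this is immaterial). The only addition you make is to spell out explicitly why the orders $|N_{H}(K)|$ and $|K|$ are invertible in $k$, which the paper leaves implicit.
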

\begin{proof}
By Lemma \ref{lee1} and Proposition \ref{prop1}, the family $(kB(H))_{H\leqslant G}$ is a stable by induction family of symmetric algebras. The result is now clear by Theorem \ref{meta}. 
\end{proof}
\section{Symmetry of the Mackey algebra over the ring of integers.}
The trace map defined in the previous section is not defined over the ring of integers. In this part let us consider the map $\phi_{G} : B(G) \to \mathbb{Z}$ defined on the usual basis by $\phi(G/H)= 1$ if $H=\{1\}$ and $\phi(G/H)=0$ otherwise. We have the following lemma:
\begin{lemme}
Let $G$ be a finite group.
\begin{enumerate}
\item $\phi_{G}$ is a linear form on $B(G)$.
\item $\phi=(\phi_{H})_{H\leqslant G}$ is a stable by induction family.
\item $\phi(G/1)=1$. 
\end{enumerate}
\end{lemme}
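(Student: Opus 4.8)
The plan is to handle the three assertions in order; the first and third are essentially formal, and only the second requires an argument.

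For (1), I would invoke the standard structure of the Burnside ring: the isomorphism classes of transitive $G$-sets $G/H$, with $H$ running over $[s(G)]$, form a $\mathbb{Z}$-basis of $B(G)$. Consequently, prescribing the values on these basis elements and extending $\mathbb{Z}$-linearly yields a well-defined element $\phi_{G}\in\mathrm{Hom}_{\mathbb{Z}}(B(G),\mathbb{Z})$; nothing further needs to be checked (and the prescription is unambiguous because the property ``$H=\{1\}$'' does not depend on the conjugacy-class representative). Assertion (3) is immediate: the $G$-set $G/1$ is precisely $G/H$ with $H=\{1\}$, so $\phi_{G}(G/1)=1$ by the defining formula.

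For (2), I would argue as follows. The functor $Ind_{H}^{G}$ takes disjoint unions to disjoint unions, and both $\phi_{G}$ and $\phi_{H}$ are $\mathbb{Z}$-linear, so it suffices to verify the identity $\phi_{G}(Ind_{H}^{G}(X))=\phi_{H}(X)$ when $X$ is a transitive $H$-set, say $X=H/K$ with $K\leqslant H$. The key point is the classical identification of $G$-sets $Ind_{H}^{G}(H/K)=G\times_{H}(H/K)\cong G/K$, realised by $(g,hK)\mapsto ghK$. Hence $\phi_{G}(Ind_{H}^{G}(H/K))=\phi_{G}(G/K)$, which by definition is $1$ if $K=\{1\}$ and $0$ otherwise; and this is exactly $\phi_{H}(H/K)$. (The two sides match up because the trivial subgroup of $H$ is sent to the trivial subgroup of $G$, so precisely the summand $K=\{1\}$ contributes on either side.) This proves (2).

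I do not expect any genuine obstacle here: the whole content is the behaviour of induction on transitive sets together with the basis theorem for Burnside rings. The only point deserving a sentence of care is the well-definedness remark above, ensuring that the value $\phi_{G}(G/K)$ is independent of the choice of conjugacy representative of $K$ and that the trivial-subgroup summands correspond under induction.
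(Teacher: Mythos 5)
Your proof is correct and matches what the paper intends: the paper states this lemma without proof, treating it as routine, and your verification (linearity from the basis of transitive $G$-sets, the identification $Ind_{H}^{G}(H/K)\cong G/K$, and the conjugation-invariance of the condition $K=\{1\}$) is exactly the standard argument being left implicit.
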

Let $G$ be a finite group. We denote by $\pi(G)$ the set of the prime divisors of $|G|$. Recall that for $\pi\subseteq \pi(G)$, a Hall-$\pi$-subgroup of $G$ (or a $S_{\pi}$-subgroup of $G$) is a $\pi$-subgroup $H$ such that $|H|$ and $|G/H|$ are coprime. The notion of $S_{\pi}$-group is a generalization of the notion of Sylow $p$-subgroup. In the case of a solvable group, there is a Sylow theorem for $S_{\pi}$-groups: 
\begin{theo}[Hall]
The group $G$ is solvable if and only if $G$ has $S_{\pi}$-subgroup for all set $\pi$ of prime divisors of $|G|$. In this case, 
\begin{enumerate} 
\item Two $S_{\pi}$-subgroups are conjugate in $G$.
\item Each $\pi$-subgroup of $G$ is contained in a $S_{\pi}$-subgroup.
\end{enumerate}
\end{theo}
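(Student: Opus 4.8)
The plan is to recall that this is the classical theorem of Philip Hall and to treat its two implications separately, since they are of quite different natures.

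For the implication ``$G$ solvable $\Rightarrow$ $G$ has an $S_{\pi}$-subgroup for every $\pi$, all of them are conjugate, and every $\pi$-subgroup lies in one of them'', I would argue by induction on $|G|$, the case $|G|=1$ being trivial. Choose a minimal normal subgroup $N$ of $G$; solvability of $G$ forces $N$ to be an elementary abelian $q$-group for a single prime $q$. The quotient $G/N$ is solvable of smaller order, so by induction it has a Hall $\pi$-subgroup $\overline{H}=H/N$. If $q\in\pi$, then $H$ is itself a $\pi$-group while $|G:H|=|G/N:\overline{H}|$ is a $\pi'$-number, so $H$ is an $S_{\pi}$-subgroup of $G$. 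If $q\notin\pi$, then $N$ is a normal $q$-subgroup of $H$ with $|H:N|=|\overline{H}|$ a $\pi$-number coprime to $|N|$, so the Schur--Zassenhaus theorem provides a complement $K$ with $H=N\rtimes K$ and $|K|=|\overline{H}|$; then $|G:K|=|G:H|\cdot|N|$ is a $\pi'$-number, so $K$ is an $S_{\pi}$-subgroup of $G$. The conjugacy and containment statements follow by the same induction: one invokes the conjugacy part of Schur--Zassenhaus (available here since $N$ is abelian, hence solvable, and $\gcd(|N|,|H/N|)=1$) to align the complements produced in the case $q\notin\pi$, after reducing, via the inductive hypothesis in $G/N$, to the situation where the two Hall subgroups have the same image in $G/N$; in the case $q\in\pi$ the Hall subgroups both contain $N$, so everything is transported directly from $G/N$.

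For the converse, ``$G$ has an $S_{\pi}$-subgroup for every set $\pi$ of prime divisors of $|G|$ $\Rightarrow$ $G$ solvable'', the standard route is to specialize to the complements $\pi=\pi(G)\setminus\{p\}$, i.e.\ to the Hall $p'$-subgroups, and to pass to a minimal counterexample $G$. One first checks that both a proper nontrivial normal subgroup $N$ and the quotient $G/N$ inherit the hypothesis (a Hall $p'$-subgroup $H$ of $G$ meets $N$ in a Hall $p'$-subgroup of $N$ and projects to a Hall $p'$-subgroup of $G/N$), so minimality forces $G$ to be simple and non-abelian. If $|\pi(G)|\leqslant 2$ one concludes immediately from Burnside's $p^{a}q^{b}$-theorem; the case $|\pi(G)|\geqslant 3$ requires a substantially more intricate analysis of intersections of the various Hall subgroups inside the simple group $G$ to manufacture a proper nontrivial subgroup with proper normal closure, contradicting simplicity.

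The main obstacle is the converse direction: it is not a formal consequence of Sylow theory, it genuinely needs Burnside's $p^{a}q^{b}$-theorem as input, and the case of three or more prime divisors is Hall's original delicate argument. The forward direction, by contrast, is routine once Schur--Zassenhaus is granted. Since this theorem is classical and not original to the present paper, in practice I would simply cite Hall's papers (or a standard textbook) rather than reproduce the argument above.
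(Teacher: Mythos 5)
Your outline of the classical argument (Schur--Zassenhaus induction over a minimal normal elementary abelian subgroup for the forward direction; minimal counterexample, Burnside's $p^{a}q^{b}$-theorem and Hall's analysis of intersections of Hall subgroups for the converse) is correct, and your closing remark matches exactly what the paper does: it gives no argument at all and simply cites Part I.6 of Gorenstein's \emph{Finite Groups}. So your proposal is consistent with the paper's treatment, and the sketch you supply is a faithful summary of the standard proof found in that reference.
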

\begin{proof}
The proof can be found in Part I.6 of \cite{gorenstein}.
\end{proof}
\begin{de}
The finite group $G$ is a square-free group if  $p^2$ does not divide the order of $G$ for any prime number $p$.  
\end{de}
Let us recall the well-known fact: 
\begin{lemme}
A square-free group $G$ is solvable.
\end{lemme}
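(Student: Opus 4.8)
The plan is to argue by induction on $|G|$. The case $|G|=1$ is trivial, so assume $|G|>1$ and that every square-free group of smaller order is solvable. Since $|G|$ is square-free, for each prime $q$ dividing $|G|$ a Sylow $q$-subgroup of $G$ has order exactly $q$, hence is cyclic. The strategy is to produce a proper normal subgroup $N\trianglelefteq G$ with both $N$ and $G/N$ solvable; as an extension of a solvable group by a solvable group is solvable, this completes the induction.

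To build such an $N$, let $p$ be the smallest prime dividing $|G|$ and let $P$ be a Sylow $p$-subgroup, so $|P|=p$. The quotient $N_{G}(P)/C_{G}(P)$ embeds into $\mathrm{Aut}(P)\cong C_{p-1}$, so its order divides $p-1$; but it also divides $|G|$, every prime divisor of which is at least $p$. Since every prime factor of $p-1$ is strictly smaller than $p$, we get $\gcd(p-1,|G|)=1$, whence $N_{G}(P)=C_{G}(P)$. By Burnside's normal $p$-complement theorem, $G$ then has a normal $p$-complement: there is $N\trianglelefteq G$ with $|N|=|G|/p$ and $G/N\cong P\cong C_{p}$.

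It remains to observe that $N$ is a square-free group of order $|G|/p<|G|$, hence solvable by the induction hypothesis, while $G/N$ is cyclic, hence solvable. Therefore $G$ is solvable, finishing the induction.

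The only real obstacle is the input from classical finite group theory: one needs Burnside's transfer (normal $p$-complement) theorem to pass from $N_{G}(P)=C_{G}(P)$ to the existence of the normal $p$-complement, together with the elementary number-theoretic point that $\gcd(p-1,|G|)=1$ for $p$ the least prime divisor of $|G|$. If one prefers to avoid invoking Burnside's theorem directly, one may instead quote the structure theorem for $Z$-groups (finite groups all of whose Sylow subgroups are cyclic): such groups are metacyclic and in particular solvable, and a square-free group is a $Z$-group since all its Sylow subgroups have prime order. Either route gives a short proof, and indeed the statement is classical, so it may equally well be disposed of by a single reference.
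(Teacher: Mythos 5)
Your proof is correct and follows essentially the same route as the paper: take the smallest prime $p$ dividing $|G|$, observe that $N_{G}(P)/C_{G}(P)$ embeds in $\mathrm{Aut}(P)$ of order $p-1$ and hence is trivial, apply Burnside's normal $p$-complement theorem, and conclude by induction on the order. Your write-up is in fact a little more careful than the paper's (which asserts the coprimality step more tersely and only sketches the induction), and the alternative via $Z$-groups is a valid classical shortcut, but there is no substantive difference in approach.
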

\begin{proof}
The group $G$ is in fact a super-solvable group. This is well known, but we weren't able to find a reference. Let $p$ be the smallest prime divisor of $|G|$. Let $P$ be a Sylow $p$-subgroup of $G$. Then $N_{G}(P)/C_{G}(P)\hookrightarrow Aut(P)$. But $|Aut(P)|=p-1$ and the order of $N_{G}(P)/C_{G}(P)$ is a product of prime numbers bigger that $p$. So $N_{G}(P)=C_{G}(P)$, and by Burnside's Theorem, the set of all the $p'$-elements of $G$ is a normal subgroup of $G$. By induction this proves that $G$ is (super-)solvable. 
\end{proof}
\begin{coro}
Let $n$ be the size of $\pi(G)$. Then there are $2^n$ conjugacy classes of subgroups of $G$, one for each divisor of $|G|$. 
\end{coro}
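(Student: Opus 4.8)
The plan is to combine the two facts just recorded — that a square-free group is solvable, so that Hall's theorem applies to \emph{every} set of primes dividing $|G|$, and that when $|G|$ is square-free every subgroup of $G$ is automatically a Hall subgroup. First I would note the elementary arithmetic: writing $|G|=\prod_{p\in\pi(G)}p$, the divisors of $|G|$ are exactly the integers $d_{\pi}:=\prod_{p\in\pi}p$ for $\pi\subseteq\pi(G)$, and $\pi\mapsto d_{\pi}$ is a bijection from the subsets of $\pi(G)$ onto the set of divisors of $|G|$; in particular there are $2^{n}$ of them.

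Next comes the key step. Let $H\leqslant G$ and set $\pi:=\pi(H)$. Since $|H|$ divides $|G|$ and $|G|$ is square-free, $H$ has order $d_{\pi}$ (each prime dividing $|H|$ occurs to the first power). Then $|H|=d_{\pi}$ and $|G:H|=|G|/|H|=d_{\pi(G)\setminus\pi}$ are coprime, again because $|G|$ is square-free, so $H$ is an $S_{\pi}$-subgroup of $G$. As $G$ is solvable, Hall's theorem says that any two $S_{\pi}$-subgroups are conjugate; hence for each $\pi$ the subgroups of $G$ of order $d_{\pi}$ form a single conjugacy class. Conversely, the existence part of Hall's theorem provides, for each $\pi\subseteq\pi(G)$, at least one $S_{\pi}$-subgroup, i.e. a subgroup of order $d_{\pi}$.

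Putting these together, the map $H\mapsto |H|$ induces a bijection between the conjugacy classes of subgroups of $G$ and the divisors of $|G|$, so there are exactly $2^{n}$ conjugacy classes of subgroups of $G$, with a unique class of subgroups of order $d$ for each divisor $d$ of $|G|$. I do not expect any genuine obstacle in this argument: all the real content is in Hall's theorem, which has already been quoted, and the only point that needs care is the (essentially immediate) observation that square-freeness of $|G|$ forces \emph{every} subgroup to be a Hall subgroup, which is what lets Hall's theorem be applied to an arbitrary subgroup rather than only to Sylow subgroups.
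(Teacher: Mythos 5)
Your proof is correct and follows essentially the same route as the paper's: both use the solvability of a square-free group to invoke Hall's theorem for existence and conjugacy of $S_{\pi}$-subgroups, together with the observation that square-freeness of $|G|$ makes every subgroup a Hall subgroup. Your write-up merely spells out the bijection between subsets of $\pi(G)$ and divisors of $|G|$ more explicitly than the paper does.
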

\begin{proof}
Let $\pi$ be a set of prime divisors of $G$. Since $G$ is solvable, there is a $S_{\pi}$-subgroup of $G$. Now since $G$ is a square-free order group, each subgroup of $G$ is a $S_{\pi}$-subgroup for a set of prime $\pi$. So two subgroups are conjugate in $G$ if and only if they have the same order. 
\end{proof}
\begin{re}\label{ordre}
Let $\mathcal{P}$ be the set of divisors of $|G|$. Let us consider the following order on this set: let $p_1,p_2,\cdots, p_n$ be the prime divisors of $|G|$ such that $p_1 < p_2<\cdots <p_n$. Then $p_1<p_2<\cdots< p_n<p_1p_2<p_1p_3<\cdots <p_1p_n <p_2 p_3< \cdots <p_{n-1}p_{n} < p_1p_2p_3 <\cdots$. let $[H]$ and $[K]$ be two conjugacy classes of subgroups of $G$. Then $[H] \leqslant [K]$ if and only if $|H|<|K|$ for this order or $|H|=|K|$.  
\end{re}
\begin{prop}\label{det1}
Let $G$ be a square-free group. The determinant of the bilinear form $b_{\phi_{G}}$ is $\pm 1$.
\end{prop}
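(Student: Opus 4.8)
The plan is to compute $\det(b_{\phi_G})$ exactly as in Proposition \ref{prop1}, by passing to the basis of $\mathbb{Q}B(G)$ given by the primitive orthogonal idempotents $e_H^G$. In that basis the Gram matrix of $b_{\phi_G}$ is diagonal, with $H$-th entry $\phi_G(e_H^G)$; plugging the formula $e_H^G=\frac{1}{|N_G(H)|}\sum_{K\leqslant H}|K|\mu(K,H)\,G/K$ together with $\phi_G(G/K)=\delta_{K,1}$ gives $\phi_G(e_H^G)=\mu(1,H)/|N_G(H)|$, where $\mu$ is the Möbius function of the subgroup lattice. The change-of-basis matrix from the basis of transitive $G$-sets to the idempotent basis is the table of marks, which (using $K\lesssim_G H\Rightarrow \pi(K)\subseteq\pi(H)$) is triangular for the order of Remark \ref{ordre}, with diagonal entries $|N_G(H):H|$, hence of determinant $\pm\prod_{H\in[s(G)]}|N_G(H):H|$. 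Therefore
\[
\det(b_{\phi_G})=\Bigl(\prod_{H\in[s(G)]}|N_G(H):H|\Bigr)^{2}\ \prod_{H\in[s(G)]}\frac{\mu(1,H)}{|N_G(H)|}=\pm\prod_{H\in[s(G)]}\frac{|N_G(H)|\,|\mu(1,H)|}{|H|^{2}}.
\]

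So Proposition \ref{det1} is reduced to the identity $\prod_{H\in[s(G)]}\frac{|N_G(H)|\,|\mu(1,H)|}{|H|^{2}}=1$ for every square-free $G$. Since $G$ is square-free there are $2^{n}$ conjugacy classes of subgroups, one per divisor of $|G|$ (the Corollary above), so $\prod_{H\in[s(G)]}|H|^{2}=\prod_{d\mid |G|}d^{2}=|G|^{2^{n}}$, and $\prod_{H\in[s(G)]}|N_G(H)|=|G|^{2^{n}}\big/\prod_{H\in[s(G)]}|G:N_G(H)|$. Substituting, the identity becomes the clean statement
\[
\prod_{H\in[s(G)]}|\mu(1,H)|=\prod_{H\in[s(G)]}|G:N_G(H)|\qquad (G\ \text{square-free}),
\]
i.e. the product of the absolute Möbius values equals the product of the sizes of the conjugacy classes of subgroups.

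I would prove this last identity by induction on $|G|$. When $|G|$ is $1$ or prime it is immediate. For the step, let $p$ be the smallest prime dividing $|G|$; as in the proof of the Lemma that square-free groups are solvable, $N_G(P)=C_G(P)$ for a Sylow $p$-subgroup $P$, so Burnside's normal $p$-complement theorem gives $G=N\rtimes C_p$ with $N\trianglelefteq G$ square-free. Both products run over $[s(G)]$, which splits into the classes of subgroups of $N$ (the $C_p$-orbits on $[s(N)]$) and the classes of subgroups $H$ with $p\mid|H|$, which up to conjugacy are of the form $H_0\rtimes C_p$ with $H_0\leqslant N$ a $C_p$-stable subgroup. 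On the first family, $\mu(1,H)$ is computed inside the subgroup lattice of $N$ and $|G:N_G(H)|$ changes from the $N$-value only by a factor $1$ or $p$ according to whether $H$ is $C_p$-stable; on the second family one uses Hall's description of the Möbius function of a solvable group (so $|\mu(1,H_0\rtimes C_p)|$ is a number of complements to a chief factor times $|\mu(1,H_0)|$) together with the fact that every subgroup of a square-free group is a Hall subgroup, unique up to conjugacy. Matching the contributions of the two families reduces the identity for $G$ to the one for $N$.

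The hard part is precisely this inductive step: organising the bookkeeping for normalizers, conjugacy-class sizes and Möbius values across the extension $G=N\rtimes C_p$ — handling the subgroups not contained in $N$ and the $C_p$-stability conditions, and checking that the ``$C_p$-layer'' multiplies the two sides by the same amount. If this direct argument gets cumbersome, I would instead unroll Hall's formula down a chief series of each $H$, writing $\prod_{H}|\mu(1,H)|$ as a product of numbers of complements of chief factors, and recover the same product on the right-hand side from the Hall-subgroup structure of $G$.
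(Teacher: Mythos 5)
Your reduction is a genuinely different route from the paper's, and the computational part of it is sound: passing to the idempotent basis of $\mathbb{Q}B(G)$, the Gram matrix is diagonal with entries $\phi_G(e_H^G)=\mu(1,H)/|N_G(H)|$, the table of marks is triangular with diagonal $|N_G(H):H|$, and the determinant in the transitive basis is indeed $\pm\prod_{H\in[s(G)]}|N_G(H)|\,|\mu(1,H)|/|H|^2$. But after this reduction the entire content of the proposition has been pushed into the identity $\prod_{H\in[s(G)]}|\mu(1,H)|=\prod_{H\in[s(G)]}|G:N_G(H)|$ for square-free $G$, and you do not prove it: you only sketch an induction along $G=N\rtimes C_p$ and explicitly flag that the bookkeeping over the two families of subgroup classes (those in $N$ and those of the form $H_0\rtimes C_p$), the $C_p$-stability conditions, and the matching of Möbius values via Hall's formula is unfinished. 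As it stands this is a genuine gap, not a complete proof; the identity is plausible (it checks out on $S_3$, $D_5$, $C_7\rtimes C_3$, cyclic groups) but it is a nontrivial statement about Möbius functions of subgroup lattices that needs an actual argument, and you would also need to justify in passing that $\mu(1,H)\neq 0$ for every subgroup of a square-free group.

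For comparison, the paper avoids all of this by never leaving the basis of transitive $G$-sets. There $b_{\phi}(G/H,G/K)$ counts double cosets $HgK$ with $H\cap K^g=1$; for a square-free group the conjugacy classes of subgroups are exactly the Hall subgroups, one per divisor of $|G|$, so ordering them as in Remark \ref{ordre} one sees that the entry is $1$ when $\pi(H)$ and $\pi(K)$ partition $\pi(G)$ (trivial intersection for cardinality reasons, and a single double coset) and $0$ below the anti-diagonal (where $\prod_{p\in\pi(H)}p\cdot\prod_{p\in\pi(K)}p>|G|$ forces $H\cap K^g\neq 1$). The Gram matrix is then anti-triangular with $\pm1$ anti-diagonal, and the determinant is $\pm1$ with no Möbius-function input at all. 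If you want to salvage your approach, the cleanest fix is probably to prove your identity \emph{by} comparing your determinant formula with such a direct evaluation — but at that point the direct evaluation already proves the proposition on its own.
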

\begin{proof}
We will work with the basis of $B(G)$ consisting of transitive $G$-sets. Let $H$ and $K$ be subgroups of $G$, then
\begin{equation*}
b_{\phi}(G/H,G/K) = Card(\{g\in [H\backslash G/ K]\ ; \ H\cap K^{g} = 1\}).
\end{equation*}
\begin{itemize}
\item If $\pi(H)\sqcup \pi(K) = \pi(G)$ and $\pi(H)\cap \pi(K)= \emptyset$, then $b_{\phi}(G/H,G/K)=1$. Indeed, by cardinality reason, for all $g\in G$, we have $H\cap K^{g}=1$, so
\begin{equation*}
b_{\phi}(G/H,G/K)=Card\{g\in [H\backslash G/K]\}=1,
\end{equation*}
since there is only one double coset in this situation.
\item If $H \leqslant  G$ and $K\leqslant G$ such that $$\Pi_{p_i\in \pi(H)}p_i \times \Pi_{p_{j}\in\pi(K)}p_j>|G|,$$ then $b_{\phi}(G/H,G/K)=0$, since $H\cap K^{g}\neq \{1\}$ for all $g\in G$.
\end{itemize}
We order the basis elements using the total order of Remark \ref{ordre} on the subgroups of $G$. The antidiagonal coefficients of the matrix correspond to subgroups $H$ and $K$ such that $\pi(H)\cap \pi(K) =\emptyset$ and $\pi(H)\sqcup \pi(K)= \pi(G)$. So the anti-diagonal coefficients of the matrix are $1$. 
\newline The coefficients under the anti-diagonal correspond to subgroups  $H$ and $K$ such that  $\Pi_{p_i\in \pi(H)}p_i \times \Pi_{p_{j}\in\pi(K)}p_j>|G|$. So these coefficients are zero. The matrix of $b_{\phi}$ in this basis, is an upper anti-triangular matrix with $1$ on the anti-diagonal so its determinant is $\pm 1$. 
\end{proof}
\begin{theo}\label{sym}
The Mackey algebra $\mu_{\mathbb{Z}}(G)$ is a symmetric algebra if and only if $G$ is a square-free group.
\end{theo}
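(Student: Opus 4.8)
The plan is to derive both implications from Theorem~\ref{main}, which says that $\mu_{\mathbb{Z}}(G)$ is symmetric precisely when $\big(\mathbb{Z}B(H)\big)_{H\leqslant G}$ is a stable by induction family of symmetric algebras. For the implication "$G$ square-free $\Rightarrow$ $\mu_{\mathbb{Z}}(G)$ symmetric", I would simply invoke the family $\phi=(\phi_H)_{H\leqslant G}$ introduced in this section, with $\phi_H(H/K)=1$ if $K=\{1\}$ and $0$ otherwise, which is stable by induction by the lemma above. Every subgroup $H$ of a square-free group is again of square-free order, so by Proposition~\ref{det1} the Gram matrix of $b_{\phi_H}$ in the basis of transitive $H$-sets has determinant $\pm1$, hence is invertible over $\mathbb{Z}$ and $b_{\phi_H}$ is non-degenerate. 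Thus $\big(\mathbb{Z}B(H)\big)_{H\leqslant G}$ is a stable by induction family of symmetric algebras and Theorem~\ref{main} applies.

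For the converse I would argue by contraposition. If $G$ is not square-free, choose a prime $p$ with $p^2\mid|G|$ and a subgroup $D\leqslant G$ of order $p^2$ (for instance a subgroup of that order inside a Sylow $p$-subgroup). It suffices to show that $\mathbb{Z}B(D)$ is \emph{not} a symmetric $\mathbb{Z}$-algebra: then $\big(\mathbb{Z}B(H)\big)_{H\leqslant G}$ cannot be a stable by induction family of symmetric algebras, and Theorem~\ref{main} shows $\mu_{\mathbb{Z}}(G)$ is not symmetric. Since $|D|=p^2$ the group $D$ is abelian, isomorphic to $C_{p^2}$ or to $C_p\times C_p$; for any $\mathbb{Z}$-linear form $\psi\colon\mathbb{Z}B(D)\to\mathbb{Z}$, writing $a_C:=\psi(D/C)$ for $C\leqslant D$, the Gram matrix $M$ of $b_\psi$ in the basis of transitive $D$-sets has entries $M_{A,B}=\psi(D/A\times D/B)=\tfrac{|D|}{|AB|}a_{A\cap B}$. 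I claim $p\mid\det M$. The row indexed by $A=\{1\}$ equals $\big(\tfrac{|D|}{|B|}a_1\big)_B$, so expanding along it shows $a_1\mid\det M$; hence we may assume $p\nmid a_1$. Now add up the rows indexed by the subgroups $A$ of order $p$ --- there is exactly one if $D\cong C_{p^2}$ and exactly $p+1$ if $D\cong C_p\times C_p$. A direct computation shows that modulo $p$ every entry of this sum vanishes except in the column indexed by $D$, where it equals $\sum_{|A|=p}a_A$; on the other hand the $A=\{1\}$ row is congruent mod $p$ to $a_1$ times the unit vector in that same column. Subtracting $\sum_{|A|=p}a_A$ times the $A=\{1\}$ row from $a_1$ times the sum of the order-$p$ rows therefore gives the zero vector modulo $p$, a non-trivial relation among the rows of $M$ (the coefficient $a_1\not\equiv0$ appears on each order-$p$ row). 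Hence $M$ is singular over $\mathbb{F}_p$, $\det M\neq\pm1$, and $b_\psi$ is degenerate over $\mathbb{Z}$; as $\psi$ was arbitrary, $\mathbb{Z}B(D)$ is not symmetric.

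The only genuinely computational step is this last one --- the fact that the integral Burnside algebra of a group of order $p^2$ is never symmetric --- and it is where some care is needed: one must treat the two isomorphism types $C_{p^2}$ and $C_p\times C_p$, and must dispose separately of the degenerate case $p\mid a_1$ (handled above by the observation that the $A=\{1\}$ row is divisible by $a_1$). Everything else is formal: the reduction to a subgroup of order exactly $p^2$ via Sylow theory, and the translation between non-degeneracy of a bilinear form and invertibility of its Gram matrix.
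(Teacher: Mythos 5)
Your proof is correct and follows essentially the same route as the paper: the forward implication via the family $\phi_H(H/K)=\delta_{K,1}$ together with Proposition~\ref{det1}, and the converse by reducing to a subgroup of order $p^2$ and showing that the Gram determinant of any associative bilinear form on its integral Burnside ring is divisible by $p$. Your mod-$p$ row relation treats $C_{p^2}$ and $C_p\times C_p$ uniformly (and disposes of the case $p\mid a_1$ explicitly), which is a minor streamlining of the paper's case-by-case matrix computations but not a different argument.
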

\begin{proof}
Let $G$ be a square-free group. Then by Theorem \ref{meta} and the result of Propostion \ref{det1}, the determinant of matrix of the bilinear form $(-,-)_{\phi} : \mu_{\mathbb{Z}}(G)\times \mu_{\mathbb{Z}}(G)\to \mathbb{Z}$ is $\pm 1$. There exist a non degenerate bilinear associative symmetric form for $\mu_{\mathbb{Z}}(G)$, so this algebra is symmetric. 
\newline Conversely, let $G$ be a finite group and $p$ be a prime number such that $p^2 \mid |G|$, then $G$ has a $p$-subgroup $P$ of order $p^2$. We prove that all the associative symmetric bilinear form ${<}-,-{>}$ on $RB(P)$ are degenerate. 
\begin{itemize}
\item Suppose that $P = C_{p^2}$, let $B$ be the Burnside functors of $Mack_{\mathbb{Z}}(P)$, then there are $a,b,c\in \mathbb{Z}$ such that the matrix $M$ of ${<}-,-{>}$ in the usual basis of $B(G)$ is:
\begin{equation*}
M=\left(\begin{array}{ccc}a & b & c \\b & pb & pc \\c & pc & p^2c\end{array}\right),
\end{equation*}
If we reduce modulo $p$ this matrix, it is clear that the two last columns are proportional. So the $det(M)$ is divisible by $p$, so $B$ is not isomorphic to its $\mathbb{Z}$-linear dual $B^{*}$.
\item Suppose that $P=C_{p} \times C_{p}$. Let $B$ be the Burnside functors of $Mack_{\mathbb{Z}}(P)$. There are elements $a,b_1,b_2,\cdots,b_{p+1},c\in \mathbb{Z}$ such that the  matrix $M$ of ${<}-,-{>}$ in the usual basis of $B(G)$ is:
\begin{equation*}
M:=\left(\begin{array}{cccccc}a & b_1 & \cdots & b_{p} & b_{p+1} & c \\b_{1} & pb_{1} & c & \cdots & c & pc \\\vdots & c & pb_{2} & \ddots & \vdots &  \\b_{p} & \vdots &  \ddots & \ddots & c & \vdots \\b_{p+1} & c & \cdots &  c & pb_{p+1} & pc \\c & pc & \cdots &  pc & pc & p^2c\end{array}\right)
\end{equation*}
By reducing this matrix modulo $p$ it is enough to look at the following $(p+1)\times (p+1)$ matrix:
\begin{equation*}
\left(\begin{array}{cccc}0 & c & \cdots & c \\c & 0 & \ddots & \vdots \\\vdots & \ddots & \ddots & c \\c & \cdots & c & 0\end{array}\right)
\end{equation*}
the sum of the lines is zero modulo $p$, so $det(M)$ is divisible by $p$. 
\end{itemize}
\end{proof}
\begin{re}
Let $G$ be a finite group and let $p$ be a prime number such that $p^2 \mid |G|$. The proof of Theorem \ref{sym} shows that if $p$ is not invertible in a commutative ring $R$, then the Mackey algebra $\mu_{R}(G)$ is not symmetric. 
\end{re}
\section{The p-local case.}
 Let $G$ be a finite group. Let $p$ be a prime number such that $p \mid |G|$. Let $R$ be a commutative ring with unit in which all the prime divisors of $|G|$ except $p$ are invertible. The ring $R$ can be  a field $k$ of characteristic $p>0$. If $(K,\mathcal{O},k)$ is a $p$-modular system, the ring $R$ can be either the valuation ring or the residue field. Finally $R$ can be the localization of $\mathbb{Z}$ at the prime $p$. 
\newline Even for the field $k$, the symmetry of the Mackey algebra  does not directly follows from Theorem \ref{sym}, since the determinant of the bilinear forms $(-,-)_{\phi}$ and $b_{\phi}$ can be zero. For example, the matrix of $b_{\phi_{C_{p^2}}}$ is: $\left(\begin{array}{ccc}p^2 & p & 1 \\p & 0 & 0 \\1 & 0 & 0\end{array}\right)$. So in characteristic $3$, for $G=C_{3}\times C_{4}$ the determinant of $b_{\phi_{G}}$ is zero. 
\newline Using Theorem \ref{main}, the symmetry of the Mackey algebra $\mu_{k}(G)$ follows from the symmetry of the modular Burnside algebras $(kB(H))_{H\leqslant G}$. In \cite{deiml}, Markus Deiml proved that the Burnside algebra of a finite group $G$ is symmetric if and only if $p^2\nmid |G|$. For our purpose, we need to check that the stability by induction condition holds. So, following Deiml's proof, we specify a symmetric associative non degenerate bilinear form on the Burnside algebra, then we check the stability condition. Almost all the arguments of Deiml can be used for the ring $R$, if it is not the case, we sketch the proof. 
\newline Let us recall that the primitive idempotents of the Burnside algebra are in bijection with the conjugacy classes of $p$-perfect subgroups of $G$ denoted by $[s(G)]_{perf}$. If $J$ is $p$-perfect, then we denote by $f_{J}^{G}$ the corresponding idempotent of $RB(G)$. 
\begin{lemme}[Lemma 3.4 \cite{yoshida_idempotent}.]\label{form}
Let $J$ be a $p$-perfect subgroup of $G$. Then, $f_{J}^{G}=\sum_{K} e_{K}^{G},$ where $K$ runs through the conjugacy classes of subgroups of $G$ such that $O^{p}(K)=_{G}J$. 
\end{lemme}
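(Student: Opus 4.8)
The plan is to identify both sides of the equality through their images under the mark (ghost) homomorphisms. It suffices to treat $R=\mathbb{Z}_{(p)}$, the general case following by base change along $\mathbb{Z}_{(p)}\to R$ (the individual $e_{K}^{G}$ live in $\mathbb{Q}B(G)$, but the sum will be shown to be $p$-locally integral). For $H\leqslant G$ write $\varphi_{H}\colon RB(G)\to R$, $X\mapsto |X^{H}|$, for the mark at $H$; this is a ring homomorphism depending only on the $G$-conjugacy class of $H$, and the combined map $\varphi=(\varphi_{H})_{[H]\in[s(G)]}$ is injective (the classical mark homomorphism, tensored with the flat $\mathbb{Z}$-algebra $\mathbb{Z}_{(p)}$). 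Read over $\mathbb{Q}$, the first item of the idempotent theorem recalled above says precisely that $\varphi(e_{K}^{G})=\delta_{[K]}$, the vector supported at $[K]$ with value $1$ there. Hence, setting $S_{J}:=\{[K]\in[s(G)] : O^{p}(K)=_{G}J\}$, the Lemma is equivalent to the single assertion that $\varphi(f_{J}^{G})$ is the indicator function of $S_{J}$.

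The bridge to the prime $p$ is the elementary congruence that, for every $H\leqslant G$ and every finite $G$-set $X$,
\[ |X^{H}|\equiv |X^{O^{p}(H)}| \pmod{p}. \]
Indeed $O^{p}(H)$ is normal in $H$ with $H/O^{p}(H)$ a $p$-group, it acts on $X^{O^{p}(H)}$, and $X^{H}=\big(X^{O^{p}(H)}\big)^{H/O^{p}(H)}$, so this is the usual fixed-point congruence for an action of a $p$-group. Now let $\eta\in RB(G)$ be any idempotent. For each subgroup $H$, applying the ring homomorphism $\varphi_{H}$ to $\eta^{2}=\eta$ shows $\varphi_{H}(\eta)$ is an idempotent of $\mathbb{Z}_{(p)}$, hence equals $0$ or $1$. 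Then $\varphi_{H}(\eta)$ and $\varphi_{O^{p}(H)}(\eta)$ are two elements of $\{0,1\}$ congruent modulo $p$ (by the displayed congruence, extended $\mathbb{Z}_{(p)}$-linearly), so they are equal. Thus the mark vector $\varphi(\eta)$ is the indicator of a union of fibres of the map $[H]\mapsto[O^{p}(H)]$ on $[s(G)]$.

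Apply this to the primitive idempotents of $RB(G)$. By the result recalled just before the Lemma these are finite in number, one $f_{J'}^{G}$ for each conjugacy class of $p$-perfect subgroups; being pairwise orthogonal with sum $1$, their supports $\mathrm{supp}\,\varphi(f_{J'}^{G})$ are pairwise disjoint with union $[s(G)]$, so they form a partition of $[s(G)]$ into $|[s(G)]_{perf}|$ blocks, each of which (by the previous paragraph) is a union of $O^{p}$-fibres. But the $O^{p}$-fibres themselves partition $[s(G)]$ into $|[s(G)]_{perf}|$ blocks, the fibre over $[J']$ being exactly $S_{J'}$; a partition into $n$ blocks each of which is a union of members of another partition into $n$ blocks must coincide with the latter, so each $\mathrm{supp}\,\varphi(f_{J'}^{G})$ is a single fibre. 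Finally, with the standard normalisation of the indexing $\varphi_{J}(f_{J}^{G})=1$, so $[J]\in\mathrm{supp}\,\varphi(f_{J}^{G})$ and therefore $\mathrm{supp}\,\varphi(f_{J}^{G})=S_{J}$. Hence $\varphi(f_{J}^{G})=\sum_{[K]\in S_{J}}\delta_{[K]}=\varphi\big(\sum_{[K]\in S_{J}}e_{K}^{G}\big)$, and injectivity of $\varphi$ gives $f_{J}^{G}=\sum_{O^{p}(K)=_{G}J}e_{K}^{G}$ (which also re-establishes that this sum lies in $\mathbb{Z}_{(p)}B(G)$).

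The two non-formal ingredients — injectivity of the mark homomorphism, and the bijection between the primitive idempotents of $RB(G)$ and the conjugacy classes of $p$-perfect subgroups — are classical and are invoked here rather than proved; granting them, the argument is pure bookkeeping on top of the $p$-group fixed-point congruence. The place where real work is hidden, and the main obstacle for a self-contained account, is precisely the second ingredient: equivalently, the $p$-local integrality of $\sum_{O^{p}(K)=_{G}J}e_{K}^{G}$, which is a theorem of Dress obtained by testing the indicator of $S_{J}$ against the Weyl-group congruences cutting out the image of $\varphi$ (their $p$-parts being satisfied exactly because of the fixed-point congruence above). All of this is Lemma $3.4$ of \cite{yoshida_idempotent}, whose proof the above reproduces.
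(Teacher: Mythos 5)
Your argument is essentially correct, but it cannot be compared with a proof in the paper for the simple reason that the paper gives none: Lemma \ref{form} is stated with a bare citation to Lemma $3.4$ of Yoshida, and what you have written is a reconstruction of that standard argument (marks, the $p$-group fixed-point congruence $|X^{H}|\equiv|X^{O^{p}(H)}|\pmod p$, and the counting of primitive idempotents against $O^{p}$-fibres), which you yourself acknowledge in your closing paragraph. The logic is sound: $\varphi_{H}(\eta)\in\{0,1\}$ for an idempotent $\eta$ because $\mathbb{Z}_{(p)}$ is local, the congruence forces $\varphi_{H}(\eta)=\varphi_{O^{p}(H)}(\eta)$, and the two partitions of $[s(G)]$ into equally many blocks must coincide; the identification of which fibre goes with which $f_{J}^{G}$ is indeed just the normalisation $\varphi_{J}(f_{J}^{G})=1$ implicit in the paper's indexing (and used again in Lemma \ref{indu}). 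Two small points to tighten. First, your reduction to $R=\mathbb{Z}_{(p)}$ by base change is slightly off: the rings $R$ relevant to this section are only assumed to invert the prime divisors of $|G|$ other than $p$, and such an $R$ need not be a $\mathbb{Z}_{(p)}$-algebra (e.g.\ $\mathbb{Z}[1/q]$ for $|G|=pq$). You should base-change from $\mathbb{Z}'=\mathbb{Z}[1/q:\ q\mid|G|,\ q\neq p]$ instead; since the denominators of the $e_{K}^{G}$ divide $|G|$, the $p$-local integrality you prove already forces $\sum_{O^{p}(K)=_{G}J}e_{K}^{G}\in\mathbb{Z}'B(G)$, so the fix is immediate. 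Second, your counting step tacitly needs each support $\mathrm{supp}\,\varphi(f_{J'}^{G})$ to be nonempty; this follows from injectivity of the mark map since a primitive idempotent is nonzero, but it is worth saying. With those repairs your proof is a legitimate self-contained substitute for the citation, granting (as the paper itself does, in the sentence immediately preceding the lemma) the Dress--Yoshida bijection between primitive idempotents of $RB(G)$ and conjugacy classes of $p$-perfect subgroups.
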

\begin{lemme}
Let $G$ be a finite group and $J$ be a $p$-perfect subgroup of $G$. If $p\mid \frac{|N_{G}(J)|}{|J|}$ and $p^{2}\nmid \frac{|N_{G}(J)|}{|J|}$, then there are exactly two conjugacy classes of subgroups $L$ of $G$ such that $O^{p}(L)=J$. 
\end{lemme}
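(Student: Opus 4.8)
The plan is to reduce the count, via the correspondence theorem, to counting conjugacy classes of $p$-subgroups of the quotient group $N_{G}(J)/J$, in which the hypothesis forces the Sylow $p$-subgroups to have order $p$.

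First I would establish the key characterization: for a subgroup $L\leqslant G$ one has $O^{p}(L)=J$ if and only if $J\trianglelefteq L$ and $L/J$ is a $p$-group. The forward implication is immediate from the definition of $O^{p}$. For the converse, $J\trianglelefteq L$ with $L/J$ a $p$-group gives $O^{p}(L)\leqslant J$; since $O^{p}(L)\trianglelefteq L$ it is normal in $J$, and $J/O^{p}(L)\leqslant L/O^{p}(L)$ is a $p$-group, so $O^{p}(J)\leqslant O^{p}(L)$; as $J$ is $p$-perfect, $J=O^{p}(J)\leqslant O^{p}(L)\leqslant J$, whence equality. Consequently the subgroups $L$ with $O^{p}(L)=J$ are exactly those with $J\leqslant L\leqslant N_{G}(J)$ for which $L/J$ is a $p$-subgroup of $\bar{N}:=N_{G}(J)/J$, and by the correspondence theorem these are in bijection with the $p$-subgroups of $\bar{N}$.

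Next I would check that $G$-conjugacy of two such subgroups is detected inside $N_{G}(J)$: if $L^{g}=L'$ with $O^{p}(L)=O^{p}(L')=J$, then $J^{g}=O^{p}(L)^{g}=O^{p}(L^{g})=O^{p}(L')=J$, so $g\in N_{G}(J)$. Hence $G$-conjugacy of the relevant subgroups $L$ coincides with $N_{G}(J)$-conjugacy, which under the correspondence matches $\bar{N}$-conjugacy of $p$-subgroups of $\bar{N}$.

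Finally, since $|\bar{N}|=|N_{G}(J)|/|J|$ is divisible by $p$ but not by $p^{2}$, every nontrivial $p$-subgroup of $\bar{N}$ has order exactly $p$ and is thus a Sylow $p$-subgroup; by Sylow's theorem all of these are conjugate in $\bar{N}$, and by Cauchy at least one exists. Therefore $\bar{N}$ has precisely two conjugacy classes of $p$-subgroups — the trivial one and the class of order-$p$ subgroups — giving exactly two conjugacy classes of subgroups $L$ of $G$ with $O^{p}(L)=J$, namely $L=J$ and a subgroup $L$ with $|L/J|=p$. The only mildly delicate point is the use of $p$-perfectness in the characterization of the condition $O^{p}(L)=J$; the remaining steps are the correspondence theorem together with Sylow's theorem.
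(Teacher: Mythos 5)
Your proof is correct and follows essentially the same route as the paper's: both reduce the count to $p$-subgroups of $N_{G}(J)/J$, which the hypothesis forces to be either trivial or Sylow of order $p$, hence falling into exactly two conjugacy classes. Your write-up is slightly more careful than the paper's, spelling out where $p$-perfectness of $J$ is needed (to upgrade $O^{p}(L)\leqslant J$ to $O^{p}(L)=J$) and why $G$-conjugacy of the relevant subgroups is detected inside $N_{G}(J)$, both of which the paper leaves implicit.
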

\begin{proof}
Let $S_{J}\leqslant N_{G}(J)$ such that $S_{J}/J$ is a Sylow $p$-subgroup of $N_{G}(J)/J$, then $O^{p}(S_{J})=J$. Conversely if $H$ is a subgroup of $G$ such $O^{p}(H)$ is conjugate to $J$, then changing $H$ by one of its conjugate one can assume that $O^{p}(H)=J$ and $H\leqslant N_{G}(J)$. Now $H/J$ is a $p$-subgroup of $N_{G}(J)/J$, so there are two possibilities: either $H=J$ or $H/J$ is a Sylow $p$-subgroup of $N_{G}(J)/J$, i-e $H$ is conjugate to $S_{J}$.  
\end{proof}
\begin{lemme}[Lemma 5 \cite{deiml}]\label{basis1}
Let $J$ be a $p$-perfect group. Let us denote by $\mathcal{S}_{J}$ a set of representatives of conjugacy classes of subgroups $L$ of $G$ such that $O^{p}(L)=J$. Then the set of $G/I f_{J}^{G}$ where $I\in \mathcal{S}_{J}$ and $J\in [s(G)]_{perf}$ is a basis of $RB(G)$. 
\end{lemme}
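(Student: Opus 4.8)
The plan is to prove that the $|[s(G)]|\times|[s(G)]|$ matrix $P$ expressing the proposed family in the standard $R$-basis $\{G/H\}_{H\in[s(G)]}$ of $RB(G)$ has determinant a unit of $R$ --- in fact $\pm 1$. Write $\mathcal{B}=\{G/I\cdot f_J^G\ :\ J\in[s(G)]_{perf},\ I\in\mathcal{S}_J\}$ and $G/I\cdot f_J^G=\sum_{H\in[s(G)]}P_{H,(I,J)}\,G/H$; the coefficients $P_{H,(I,J)}$ lie in $R$ because $f_J^G\in RB(G)$. First one checks that $\mathcal{B}$ has the right cardinality: for any subgroup $L\leqslant G$ the subgroup $O^p(L)$ is $p$-perfect, and $L\mapsto O^p(L)$ induces a partition $[s(G)]=\bigsqcup_{J\in[s(G)]_{perf}}\mathcal{S}_J$, whence $|\mathcal{B}|=\sum_{J}|\mathcal{S}_J|=|[s(G)]|$. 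Thus $P$ is square, $RB(G)$ is $R$-free on the standard basis, and by the adjugate formula it suffices to show $\det P=\pm 1$. Since the coefficients of each $e_K^G$ have denominators dividing $|G|$ and, by the $p$-local description of the primitive idempotents recalled above, no factor of $p$ occurs in $f_J^G$, the matrix $P$ is already defined over $\mathbb{Z}_S$, where $S$ is the set of prime divisors of $|G|$ other than $p$; hence it is enough to compute $\det P$ after extending scalars to $\mathbb{Q}$.

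Over $\mathbb{Q}$ one may use the $\mathbb{Q}$-basis of $\mathbb{Q}B(G)$ given by the primitive idempotents $e_K^G$, $K\in[s(G)]$. By Lemma \ref{form} and the relation $X\cdot e_K^G=|X^K|\,e_K^G$ for a $G$-set $X$ recalled above,
\begin{equation*}
G/I\cdot f_J^G=\sum_{K\,:\,O^p(K)=_G J}|(G/I)^K|\;e_K^G .
\end{equation*}
Hence, if $A$ is the matrix of $\mathcal{B}$ in the basis $(e_K^G)_K$, then $A$ is block diagonal for the partition $[s(G)]=\bigsqcup_J\mathcal{S}_J$, the $J$-block being the principal submatrix $T_J=\big(|(G/I)^K|\big)_{I,K\in\mathcal{S}_J}$ of the table of marks. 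Since $|(G/I)^K|\neq 0$ forces $K$ to be $G$-subconjugate to $I$ (so $|K|\leqslant|I|$) and $|(G/I)^I|=|N_G(I)/I|$, ordering $\mathcal{S}_J$ by the order of the subgroups makes $T_J$ triangular with diagonal $\big(|N_G(I)/I|\big)_{I\in\mathcal{S}_J}$; in particular each $T_J$ is invertible over $\mathbb{Q}$, so $A$ is invertible (this also re-proves that $\mathcal{B}$ is a $\mathbb{Q}$-basis) and $\det A=\pm\prod_{I\in[s(G)]}|N_G(I)/I|$. Applying the same reasoning to the standard basis shows that the matrix $B$ of $(G/H)_H$ in $(e_K^G)_K$ is the table of marks, triangular with diagonal $\big(|N_G(H)/H|\big)_H$, so $\det B=\pm\prod_{H\in[s(G)]}|N_G(H)/H|$. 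From $A=BP$ one obtains
\begin{equation*}
\det P=\frac{\det A}{\det B}=\pm 1,
\end{equation*}
because the two products run over the same set $[s(G)]$ and therefore coincide.

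Consequently $\det P=\pm 1$ already in $\mathbb{Z}_S$, so $P\in\mathrm{GL}_{|[s(G)]|}(\mathbb{Z}_S)$; since any ring $R$ in which the primes of $S$ are invertible is a $\mathbb{Z}_S$-algebra, base change shows that $\mathcal{B}$ is an $R$-basis of $RB(G)$, which is the assertion. The only step requiring care is the bookkeeping in the second paragraph: one must check precisely that passing to the rational idempotent basis makes both change-of-basis matrices block triangular with the stated diagonals, so that the potentially $p$-divisible denominators $|N_G(I)/I|$ --- which occur, for instance, whenever $I$ is $p$-perfect with $p\mid|N_G(I)/I|$ --- cancel identically between numerator and denominator; everything else is formal. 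An alternative, following \cite{deiml}, is to reduce modulo $p$ by Nakayama's lemma and prove linear independence in $\mathbb{F}_pB(G)$ directly, but the determinant computation above avoids this.
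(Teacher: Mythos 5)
Your proof is correct, and it takes a genuinely different route from the paper's. The paper imports linear independence from Deiml's Lemma 5 and then establishes generation by induction on $|K|$: one writes $G/K = G/K\, f_{O^{p}(K)}^{G} + \sum_{J} G/K\, f_{J}^{G}$ (sum over the remaining $p$-perfect classes $J$) and checks that the correction terms are supported on transitive $G$-sets $G/L'$ with $|L'|<|K|$. You instead compute the determinant of the transition matrix $P$ from the standard basis to the proposed family by passing to the rational idempotent basis $(e_{K}^{G})$: both change-of-basis matrices into that basis are (block) triangular with the same multiset of diagonal entries $|N_{G}(I)/I|$, $I\in[s(G)]$, so $\det P=\pm 1$. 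This is self-contained (no appeal to Deiml for freeness, no induction), handles freeness and generation in one stroke, and yields the slightly stronger statement that the transition matrix is unimodular over $\mathbb{Z}[1/q : q\in S]$, from which the case of a general $R$ follows by base change --- precisely the point where the paper notes that Deiml's dimension argument breaks down. The ingredients you use beyond the paper's statements are standard and correctly invoked: the idempotents $f_{J}^{G}$ have coefficients with denominators prime to $p$ (Dress/Yoshida), $|(G/I)^{K}|\neq 0$ forces $K$ to be subconjugate to $I$ (so ties in order cannot spoil triangularity, since subconjugate subgroups of equal order are conjugate), and $|(G/I)^{I}|=|N_{G}(I)/I|$.
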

\begin{proof}
Here, the proof of Deiml does not work for a general ring $R$, since there is a dimension argument. However by Lemma 5 (\cite{deiml}), we know that the family $\big(G/I f_{J}^{G}\big)$ is a free family, so we just need to check that it is a generating family. Let $K$ be a subgroup of $G$. It is enough to check that $G/K$ is a $R$-linear combination of elements of the form $G/I f_{J}^{G}$ where $O^{p}(I)=J$. If $|K|=1$, then $G/1 = G/1 f_{1}^{G}$. By induction on $|K|$, in $RB(G)$, we have:
\begin{align*}
G/K = G/K \times 1 &= \sum_{J\in [s(G)]_{perf}} G/K \times f_{J}^{G} \\
&= G/K f_{O^{p}(K)}^{G} + \sum_{O^{p}(K)\neq J\in [s(G)]_{perf}} G/K\times f_{J}^{G}.
\end{align*}
Now $G/K f_{J}^{G}$ is zero unless $J$ is conjugate to a subgroup of $K$. If it is the case, we have:
\begin{align*}
G/K f_{J}^{G} = \sum_{L\in [s(G)]\ ;\ O^{p}(L)=_{G}J} |G/K^{L}|e_{L}^{G}.
\end{align*}
Now $|G/K^{L}|$ is zero unless $L$ is conjugate to a subgroup of $K$. Moreover, since $O^{p}(L) = K \neq O^{p}(K)$, the group $L$ canot be equal to $K$. So $G/K f_{J}^{G}$ is a $R$-linear combination of transitive $G$-set $G/L'$ where $|L'|<|K|$. By induction, $G/K$ is a $R$-linear combination of elements of the form $G/I f_{J}^{G}$.
\end{proof}
Following \cite{deiml}, let us consider the linear form $\phi_{G}$ on $RB(G)$ defined on a basis element by:
\begin{equation*}
\phi\big(G/I f_{J}^{G}\big) = \left\{\begin{array}{c}1 \hbox{ if $I=J$,} \\0 \hbox{ if $I\neq J$.}\end{array}\right.
\end{equation*}
\begin{res}\label{re1}
\begin{itemize}
\item If $R=k$ is a field of characteristic $p$, and if $p\nmid |G|$, then the idempotents $f_{J}^{G}$ are the idempotents $e_{J}^{G}$ so it is easy to check that \begin{center}$\phi_{G}(X) = \sum_{H\in [s(G)]}\frac{|H|}{|N_{G}(H)|} |X^{H}|$, for $X\in kB(G)$. \end{center}
\item If $p\mid |G|$, it seems rather difficult to compute the value of $\phi_{G}$ on a transitive $G$-set. 
\end{itemize}
\end{res}
\begin{lemme}\label{indu}
Let $H\leqslant G$ and $J$ be a $p$-perfect subgroup of $H$. Then: 
\begin{enumerate}
\item $Ind_{H}^{G}(H/J f_{J}^{H})=G/Jf_{J}^{G}$.
\item Moreover if $p\mid |N_{H}(J)/J|$ and $p^2\nmid |N_{H}(J)/J|$, let $S_{J}$ be a subgroup of $H$ such that $J\subset S_{J}$ and $O^{p}(S_{J})=J$. Then:
$$Ind_{H}^{G}(H/S_{J}f_{J}^{H})=G/S_{J} f_{J}^{G}.$$
\end{enumerate}
\end{lemme}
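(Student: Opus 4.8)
The plan is to verify both identities after inverting $|G|$, that is, inside $\mathbb{Q}B(H)$ and $\mathbb{Q}B(G)$, where every element decomposes along the primitive idempotents $e_L^{\bullet}$ and induction is governed by $Ind_H^G(e_L^H)=\frac{|N_G(L)|}{|N_H(L)|}e_L^G$. Since all the elements that will occur lie already in $\mathbb{Z}_{(p)}B(\bullet)$, which is free (hence torsion-free) over $\mathbb{Z}_{(p)}$ and so embeds into $\mathbb{Q}B(\bullet)$, and since $RB(\bullet)=R\otimes_{\mathbb{Z}_{(p)}}\mathbb{Z}_{(p)}B(\bullet)$ with both $Ind_H^G$ and the idempotents $f_J^{\bullet}$ defined over $\mathbb{Z}_{(p)}$, any identity checked over $\mathbb{Q}$ will descend to $R$. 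Recall also that $p$-perfectness of $J$ is an intrinsic property, so $f_J^G$ is defined, and that $Ind_H^G$ is the $\mathbb{Z}$-linear map with $Ind_H^G(H/L)=G/L$.

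For (1), I would expand $H/J\cdot f_J^H$ in the idempotent basis: from $H/J=\sum_{[L]}|(H/J)^L|e_L^H$ (which is just $X e_L^H=|X^L|e_L^H$ summed up) and $f_J^H=\sum_{O^{p}(L)=_{H}J}e_L^H$ (Lemma \ref{form}), one gets $H/J\cdot f_J^H=\sum_{O^{p}(L)=_{H}J}|(H/J)^L|\,e_L^H$. The key observation is that $O^{p}(L)=_{H}J$ forces $|L|\geqslant|J|$, with equality exactly when $L$ is $p$-perfect, i.e.\ $L=_{H}J$; when $|L|>|J|$ one has $(H/J)^L=\varnothing$ for cardinality reasons, while $(H/J)^J=N_H(J)/J$. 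Hence $H/J\cdot f_J^H=[N_H(J):J]\,e_J^H$, and the same count in $G$ gives $G/J\cdot f_J^G=[N_G(J):J]\,e_J^G$. Applying $Ind_H^G$ and the idempotent induction formula, $[N_H(J):J]\,e_J^H$ is sent to $[N_H(J):J]\cdot\frac{|N_G(J)|}{|N_H(J)|}\,e_J^G=[N_G(J):J]\,e_J^G$, which is (1).

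For (2), I would first note that the hypotheses give $[S_J:J]=p$ (as $S_J/J$ is a nontrivial $p$-subgroup of $N_H(J)/J$ and $p^2\nmid|N_H(J)/J|$) and $S_J\leqslant N_H(J)$. Since $S_J/J$ is a $p$-group, $O^{p}(L)\leqslant J$ for every $L\leqslant S_J$; combining this with an order comparison as in (1), the only $L\leqslant S_J$ with $O^{p}(L)=_{G}J$ are $J$ and $S_J$. Therefore $G/S_J\cdot f_J^G=|(G/S_J)^J|\,e_J^G+|(G/S_J)^{S_J}|\,e_{S_J}^G$, with $(G/S_J)^J=N_G(J)/S_J$ and $(G/S_J)^{S_J}=N_G(S_J)/S_J$, so $G/S_J\cdot f_J^G=\tfrac{1}{p}[N_G(J):J]\,e_J^G+[N_G(S_J):S_J]\,e_{S_J}^G$, and likewise with $H$ in place of $G$. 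Applying $Ind_H^G$ and the idempotent induction formula, the coefficient of $e_J^G$ becomes $\tfrac{1}{p}[N_H(J):J]\cdot\frac{|N_G(J)|}{|N_H(J)|}=\tfrac{1}{p}[N_G(J):J]$ and that of $e_{S_J}^G$ becomes $[N_H(S_J):S_J]\cdot\frac{|N_G(S_J)|}{|N_H(S_J)|}=[N_G(S_J):S_J]$; in particular the (unknown) normalizers of $S_J$ drop out, and the two sides coincide.

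Every individual step is either an elementary orbit count forced by an order comparison, or the equality $X e_L^{\bullet}=|X^L|e_L^{\bullet}$, or the induction formula for the $e_L$; the one point requiring a little care is the reduction itself, since when $R$ has characteristic $p$ one cannot compute literally inside $\mathbb{Q}B(G)$ but must instead argue that the two identities, once established in $\mathbb{Z}_{(p)}B(G)$, are preserved by the base change $R\otimes_{\mathbb{Z}_{(p)}}-$. I expect this bookkeeping, rather than any of the group-theoretic counts, to be the main thing to get exactly right.
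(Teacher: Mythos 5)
Your proof is correct. Part (1) coincides with the paper's argument: both reduce $H/J\,f_J^H$ to $\frac{|N_H(J)|}{|J|}e_J^H$ and apply $Ind_H^G(e_J^H)=\frac{|N_G(J)|}{|N_H(J)|}e_J^G$. For part (2), however, you take a genuinely different route. The paper replaces $f_J^H$ by $Res^G_H(f_J^G)=\sum_{J'}f_{J'}^H$ (Yoshida), kills the terms with $J'\neq_H J$ by a subgroup-lattice argument (two distinct index-$p$ subgroups of $S_J$ would force $p^2\mid |S_J|/|J\cap J'|$, contradicting $p^2\nmid |N_H(J)/J|$), and then concludes in one line from the Frobenius/projection formula $Ind_H^G\bigl(X\cdot Res^G_H Y\bigr)=Ind_H^G(X)\cdot Y$. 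You instead expand both $H/S_J f_J^H$ and $G/S_J f_J^G$ in the rational idempotent basis, observe that only $e_J$ and $e_{S_J}$ survive (using $O^p(L)\leqslant L\cap J$ for $L\leqslant S_J$ and $[S_J:J]=p$), and check that the coefficients $\frac{1}{p}[N(J):J]$ and $[N(S_J):S_J]$ transform correctly under the idempotent induction formula, the normalizers of $S_J$ cancelling. Your method is more uniform (both parts by the same computation) and avoids the restriction and projection formulas; the paper's avoids identifying $(G/S_J)^J$ and any mention of $N_G(S_J)$. One small correction to your descent bookkeeping: $R$ need not be a $\mathbb{Z}_{(p)}$-algebra, since only the primes dividing $|G|$ other than $p$ are assumed invertible in $R$; the subring over which to verify the identity before base change should be $\mathbb{Z}[1/q:\ q\mid |G|,\ q\neq p]$ rather than $\mathbb{Z}_{(p)}$. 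This ring still contains the idempotents $f_J$ (their denominators divide orders of normalizers and are prime to $p$), is torsion-free, and maps to $R$, so the reduction to a computation over $\mathbb{Q}$ goes through exactly as you describe.
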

\begin{proof}
Using Lemma \ref{form}, we have: 
\begin{align*}
Ind_{H}^{G}\big(H/J f_{J}^{H}\big)&= \frac{|N_{H}(J)|}{|J|}Ind_{H}^{G}(e_{H}^{G})\\
&=\frac{|N_{H}(J)|}{|J|} \frac{|N_{G}(J)|}{|N_{H}(J)|} e_{J}^{G}\\
&=G/J f_{J}^{G}.
\end{align*}
For the second part, by Lemma $3.5$ of \cite{yoshida_idempotent}, we have $Res^{G}_{H}(f^{G}_{J})=\sum_{J'} f^{H}_{J'}$ where $J'$ runs the subgroups of $H$ up to $H$-conjugacy such that $J'$ is conjugate to $J$ in $G$. So, we have:
\begin{align*}
H/S_{J} Res^{G}_{H}(f^{G}_{J})&=\sum_{J'} H/S_{J} f^{H}_{J'},
\end{align*}
but we have: 
\begin{align*}
H/S_{J}f_{J'}^{H}= \sum_{\underset{O^{p}(K)=J'}{K\leqslant J\hbox{ {\footnotesize up to $H$-conjugacy}}}} |(H/S_{J})^K|e_{K}^{H}. 
\end{align*}
But $|(H/S_{J})^{K}|=0$ unless $K$ is $H$-conjugate to a subgroup of $S_{J}$. Without lost of generality one can assume $K\subseteq S_{J}$. So the only non zero terms are for $J'\leqslant K \leqslant S_{J}$ and since $|S_{J}|/|J'|=p$ either $K=J'$ or $K=S_{J}$. If $K=S_{J}$, then $O^{p}(K)=J'$ is $H$-conjugate to $O^{p}(S_{J})=J$, that is $J'$ is $H$-conjugate to $J$.  
\newline If $K=J'$ and $J\neq J'$, then there we have the following situation:
\begin{equation*}
\xymatrix{
& S_{J} &\\
J\ar@{=}[ur]^{p} & & J'\ar@{-}[ul]_{p}\\
& J\cap J'\ar@{-}[ul]\ar@{=}[ur] &
}
\end{equation*}
The two subgroups $J$ and $J'$ are of index $p$ in $S_{J}$. We have $JJ'=S_{J}$. Since $J$ is normal in $S_{J}$, the intersection $J\cap J'$ is normal in $J'$. Then by the second isomorphism theorem, we have $|J'|/|J'\cap J|=p$. This implies that $p^2/|S_{J}|$ which is not possible by hypothesis. So we have $H/S_{J}Res^{G}_{H}(f_{J}^{G})=H/S_{J} f_{J}^{H}$. Using the Frobenius identity (see Proposition $3.13$ \cite{bouc_burnside}), we have:
\begin{align*}
Ind_{H}^{G}(H/S_{j} f_{J}^{H})&=Ind_{H}^{G}(H/S_{J}Res^{G}_{H}f^{G}_{J})\\
&= G/S_{J}f^{G}_{J}. 
\end{align*}
\newline 
\end{proof}
\begin{lemme}\label{mod1}
Let $G$ be a finite group.
\begin{enumerate}
\item $\phi_{G}(G/1)=1$.
\item if $p\mid |G|$ and $p^2 \nmid |G|$, then the family $\big(\phi_{H}\big)_{H\leqslant G}$ is stable by induction. 
\end{enumerate}
\end{lemme}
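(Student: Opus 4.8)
The plan is to check both assertions directly on the basis of $RB(H)$ given by Lemma~\ref{basis1}, using the explicit induction formulas of Lemma~\ref{indu}. For assertion (1): the trivial subgroup is $p$-perfect and belongs to $\mathcal{S}_{1}$, so $G/1\,f_{1}^{G}$ is exactly the Lemma~\ref{basis1} basis element indexed by $I=J=1$. Since $(G/1)^{K}=\emptyset$ for every non-trivial subgroup $K$, the rule $X\cdot e_{K}^{G}=|X^{K}|\,e_{K}^{G}$ gives $G/1\cdot f_{J}^{G}=0$ for every $p$-perfect $J\neq 1$, so expanding $1=\sum_{J}f_{J}^{G}$ yields $G/1=G/1\,f_{1}^{G}$ in $RB(G)$; hence $\phi_{G}(G/1)=\phi_{G}(G/1\,f_{1}^{G})=1$ by definition of $\phi_{G}$. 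This needs no hypothesis on $|G|$.

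For assertion (2), by $R$-linearity of $Ind_{H}^{G}$ and of the maps $\phi$ it suffices to prove $\phi_{G}\big(Ind_{H}^{G}(H/I\,f_{J}^{H})\big)=\phi_{H}(H/I\,f_{J}^{H})$ for a basis element $H/I\,f_{J}^{H}$ of $RB(H)$ as in Lemma~\ref{basis1}, with $J$ a $p$-perfect subgroup of $H$ and $I$ a representative in $\mathcal{S}_{J}$. The hypothesis $p^{2}\nmid|G|$ forces $p^{2}\nmid|N_{H}(J)/J|$ and $p^{2}\nmid|N_{G}(J)/J|$, so by the structural lemma preceding Lemma~\ref{basis1} the relevant set $\mathcal{S}_{J}$ (formed inside $H$, resp. inside $G$) is either $\{J\}$ or $\{J,S_{J}\}$ with $|S_{J}/J|=p$. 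If $I=J$, Lemma~\ref{indu}(1) gives $Ind_{H}^{G}(H/J\,f_{J}^{H})=G/J\,f_{J}^{G}$, which -- since $J$ is $p$-perfect in $G$ too -- is precisely the basis element of $RB(G)$ indexed by $I'=J'=J$, so both sides equal $1$. If $I=S_{J}$, then necessarily $p\mid|N_{H}(J)/J|$, so Lemma~\ref{indu}(2) applies and gives $Ind_{H}^{G}(H/S_{J}\,f_{J}^{H})=G/S_{J}\,f_{J}^{G}$; and because $S_{J}\leqslant N_{H}(J)\leqslant N_{G}(J)$ with $|S_{J}/J|=p$ and $p^{2}\nmid|N_{G}(J)/J|$, the group $S_{J}/J$ is a Sylow $p$-subgroup of $N_{G}(J)/J$, hence $S_{J}$ is $G$-conjugate to the representative of $\mathcal{S}_{J}$ (inside $G$) different from $J$, so $G/S_{J}\,f_{J}^{G}$ is the basis element of $RB(G)$ indexed by $I'=S_{J}\neq J'=J$, and both sides equal $0$. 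Linearity then gives stability by induction for the whole family $(\phi_{H})_{H\leqslant G}$.

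The delicate point is not any single computation but the combinatorial bookkeeping in the second case: one has to be sure that inducing a basis vector of $RB(H)$ lands exactly on a basis vector of $RB(G)$ of the same type ($I'=J'$, resp. $I'\neq J'$), which is exactly what Lemma~\ref{indu} provides -- but only once the hypothesis $p^{2}\nmid|G|$ has been used to guarantee that the sets $\mathcal{S}_{J}$ have the simple one- or two-element shape both in $H$ and in $G$, and that the non-trivial representative $S_{J}$ retains that role when one enlarges the ambient group from $H$ to $G$. Without $p^{2}\nmid|G|$ the sets $\mathcal{S}_{J}$ could be larger and the formulas of Lemma~\ref{indu} would no longer describe the full induction, which is coherent with the Mackey algebra failing to be symmetric in that case.
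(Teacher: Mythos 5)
Your proof is correct and follows essentially the same route as the paper: part (1) via the identity $G/1 = G/1\,f_{1}^{G}$, and part (2) by checking stability on the basis of Lemma~\ref{basis1} using the two induction formulas of Lemma~\ref{indu}. The paper's own proof is just a two-line pointer to Lemma~\ref{indu}; your write-up supplies the bookkeeping (that induction carries basis elements of type $I=J$, resp.\ $I\neq J$, to basis elements of the same type) that the paper leaves implicit.
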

\begin{proof}
\begin{enumerate}
\item The first part is obvious since $G/1f_{1}^{G}=|G|e^{G}_{1}=G/1$.
\item The second part follow from Lemma \ref{indu}. 
\end{enumerate}
\end{proof}
\begin{prop}\label{mod2}
Let $G$ be a finite group such that $p\mid |G|$ and $p^2\nmid |G|$. Then the Burnside algebra  $RB(G)$ is a symmetric algebra. \end{prop}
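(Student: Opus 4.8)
The goal is to exhibit a non-degenerate, associative, symmetric bilinear form on $RB(G)$; since $RB(G)$ is a free $R$-module of finite rank by Lemma~\ref{basis1} and is commutative, the form $b_{\phi_{G}}(X,Y)=\phi_{G}(XY)$ attached to the linear form $\phi_{G}$ introduced above is automatically symmetric and associative, so the only thing to prove is that $b_{\phi_{G}}$ is non-degenerate. The first step is to split the problem along the primitive idempotents: $RB(G)=\bigoplus_{J\in[s(G)]_{perf}}RB(G)f_{J}^{G}$ as $R$-algebras, and since $uv=0$ for $u\in RB(G)f_{J}^{G}$, $v\in RB(G)f_{J'}^{G}$ with $J\neq J'$, the form $b_{\phi_{G}}$ is the orthogonal direct sum of its restrictions to the blocks $RB(G)f_{J}^{G}$. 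Moreover the basis of Lemma~\ref{basis1} restricts to the $R$-basis $\{G/I\,f_{J}^{G}\ :\ I\in\mathcal{S}_{J}\}$ of the block $RB(G)f_{J}^{G}$, so it suffices to show each of these finitely many blocks carries a non-degenerate form. Because $|N_{G}(J)/J|$ divides $|G|$, the hypothesis $p^{2}\nmid|G|$ forces $p^{2}\nmid|N_{G}(J)/J|$; hence, by Lemma~\ref{form} and the lemma preceding Lemma~\ref{basis1}, $\mathcal{S}_{J}=\{J\}$ when $p\nmid|N_{G}(J)/J|$, while $\mathcal{S}_{J}=\{J,S_{J}\}$ with $[S_{J}:J]=p$ and $O^{p}(S_{J})=J$ when $p\mid|N_{G}(J)/J|$. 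So every block is $1\times1$ or $2\times2$.

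For a rank-one block one has $f_{J}^{G}=e_{J}^{G}$ by Lemma~\ref{form}, and the relation $X\cdot e_{J}^{G}=|X^{J}|\,e_{J}^{G}$ gives $G/J\,f_{J}^{G}=\tfrac{|N_{G}(J)|}{|J|}\,e_{J}^{G}$, whence $b_{\phi_{G}}(G/Jf_{J}^{G},\,G/Jf_{J}^{G})=\phi_{G}\big(\tfrac{|N_{G}(J)|^{2}}{|J|^{2}}e_{J}^{G}\big)=|N_{G}(J)/J|$. Every prime divisor of $|N_{G}(J)/J|$ divides $|G|$ and is distinct from $p$, so this scalar is a unit of $R$ and the block is non-degenerate.

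The rank-two case is the computational heart, and here I follow Deiml. Passing to $R[\tfrac1p]B(G)\supseteq RB(G)$ (where every $e_{K}^{G}$ lives, its denominator dividing $|G|$), write $m:=|N_{G}(J)/J|=pm'$ and $n:=|N_{G}(S_{J})/S_{J}|$, and expand the basis of the block in the idempotent basis:
\[
x:=G/J\,f_{J}^{G}=m\,e_{J}^{G},\qquad y:=G/S_{J}\,f_{J}^{G}=m'\,e_{J}^{G}+n\,e_{S_{J}}^{G},
\]
the coefficients of $y$ being the fixed-point numbers $|(G/S_{J})^{J}|=m/p=m'$ and $|(G/S_{J})^{S_{J}}|=n$; here one uses that the only $G$-conjugate of $J$ contained in $S_{J}$ is $J$ itself (order argument: such a conjugate $K$ satisfies $O^{p}(K)\leqslant K\cap J$ since $K/(K\cap J)\hookrightarrow S_{J}/J$ is a $p$-group, and $O^{p}(K)$ is $G$-conjugate to $J$, forcing $O^{p}(K)=J$ and then $K=J$). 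The defining equations $\phi_{G}(x)=1$, $\phi_{G}(y)=0$ then yield $\phi_{G}(e_{J}^{G})=\tfrac1m$ and $\phi_{G}(e_{S_{J}}^{G})=-\tfrac1{np}$, whence the Gram matrix of the block in the basis $(x,y)$ is
\[
\begin{pmatrix} pm' & m' \\ m' & (m'-n)/p \end{pmatrix},
\]
with determinant $m'(m'-n)-m'^{2}=-m'n$. To see that this is legitimate over $R$ and that the block is non-degenerate, note that $J=O^{p}(S_{J})$ is characteristic in $S_{J}$, so $N_{G}(S_{J})\leqslant N_{G}(J)$; counting Sylow $p$-subgroups of $N_{G}(J)/J$ then gives $m'=sn$ with $s\equiv1\pmod p$. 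This shows $(m'-n)/p=(s-1)n/p\in\mathbb{Z}$, so the matrix really has entries in $R$, and it shows that $m'$ and $n$ are prime to $p$ with all their prime divisors dividing $|G|$, hence units of $R$; therefore $\det=-m'n\in R^{\times}$.

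Assembling the blocks, $b_{\phi_{G}}$ is non-degenerate, so $(RB(G),\phi_{G})$ is a symmetric $R$-algebra, which is the assertion. The delicate point is the rank-two block: one has to check that the a priori rational Gram matrix and its determinant actually lie in $R$ and are invertible there, and this is exactly where the hypothesis $p^{2}\nmid|G|$ and the Sylow count $m'=sn$, $s\equiv1\pmod p$, enter; the various fixed-point evaluations and the expansion of $x$ and $y$ in the idempotent basis are routine.
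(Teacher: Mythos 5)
Your proof is correct and follows essentially the same route as the paper: the same block decomposition of $b_{\phi_G}$ indexed by the $p$-perfect subgroups $J$, with $1\times 1$ blocks equal to $|N_G(J)/J|$ and $2\times 2$ blocks whose Gram matrix and determinant $-m'n=-|N_G(J)|\,|N_G(S_J)|/|S_J|^2$ coincide with those computed in the paper. The only difference is cosmetic — you evaluate the block entries by expanding $G/J\,f_J^G$ and $G/S_J\,f_J^G$ in the idempotent basis of $R[\tfrac1p]B(G)$ rather than via the double-coset product formula, and you add an explicit Sylow-counting check ($m'=sn$, $s\equiv 1 \bmod p$) that the $(2,2)$ entry is integral, which the paper leaves implicit.
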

\begin{proof}
In the basis of Lemma \ref{basis1} the matrix of $b_{\phi_{G}}$ is a diagonal by block matrix. The blocks are indexed by the conjugacy classes of $p$-perfect subgroups of $G$. If $J$ is a $p$-perfect subgroup such that $p\nmid |N_{G}(J)/J|$, then there is only one conjugacy class of subgroup $L$ of $G$ such that $O^{p}(L)=J$, so the block indexed by $J$ is of size $1$. The entry in this block is :
\begin{align*}
b_{\phi_{G}}(G/Jf_{J}^{G},G/Jf_{J}^{G}) &= \phi(G_{J}f_{J}^{G}\times G/Jf_{J}^{G})\\
&= \sum_{g\in [J\backslash G/J]} \phi(G/J\cap J^{g}f_{J}^{G})\\
&=\sum_{g\in [N_{G}(J)/J]}\phi(G/Jf_{J}^{G})\\
&=\frac{|N_{G}(J)|}{|J|} \in R^{\times}.  
\end{align*}
If $J$ is a $p$-perfect subgroup of $G$ such that $p\mid |N_{G}(J)/J|$, then there are two conjugacy classes of subgroups $L$ of $G$ such that $O^{p}(L)=J$. We denote by $S_{J}$ a subgroup of $G$ such that $J\subset S_{J}$ and $O^{p}(S_{J})=J$. The block matrix indexed by $J$ is of size $2$. The first diagonal entry is:
\begin{align*}
b_{\phi_{G}}(G/Jf_{J}^{G},G/Jf_{J}^{G})=\frac{|N_{G}(J)|}{|J|}. 
\end{align*}
the anti-diagonal entries are:
\begin{align*}
b_{\phi_{G}}(G/J\times G/S_{J} f_{J}^{G})&=\sum_{g\in [S_{J}\backslash G/J]} \phi(G/S_{J}\cap J^{g} f_{J}^{G})\\
&=\sum_{g\in [S_{J}\backslash N_{G}(J)/J]} 1\\
&=\frac{|N_{G}(J)|}{|S_{J}|}.
\end{align*}
Finally, the second diagonal element is:
\begin{align*}
a:=b_{\phi_{G}}(G/S_{J}f_{J}^{G},G/S_{J}f_{J}^{G})= \sum_{g\in [S_{J}\backslash G/S_J]} \phi_{G}(G/S_{J}\cap S_{J}^{g} f_{J}^{G}).
\end{align*}
Now, if $g\notin N_{G}(J)$ we have $G/S_{J}\cap S_{J}^{g} f_{J}^{G}=0$ and if $g\in N_{G}(S_{J})$, we have $$\phi_{G}(G/S_{J}f_{J}^{G})=0.$$ For the computation of $a$, we work in $\mathbb{Q}$. Then we have:
\begin{align*}
a&=\sum_{g\in [S_{J}\backslash G/S_J]} \phi_{G}(G/S_{J}\cap S_{J}^{g} f_{J}^{G})\\
&= \sum_{g\in N_{G}(J)\backslash N_{G}(S_{J})} \frac{|S_{J}\cap S_{J}^{g}|}{|S_{J}|^{2}} \phi_{G}(G/J f_{J}^{G})\\
&=\sum_{g\in N_{G}(J)\backslash N_{G}(S_{J})} \frac{|J|}{|S_{J}|^2}\\
&= \frac{|J|}{|S_{J}|^2}\big(|N_{G}(J)|-|N_{G}(S_{J})|).
\end{align*}
The determinant of each of these blocks is:
\begin{align*}
&\frac{|N_{G}(J)|}{|J|}\times \Big(\frac{|J|}{|S_{J}|^2}\big(|N_{G}(J)|-|N_{G}(S_{J})|)\Big)- \frac{|N_{G}(J)|^2}{|S_{J}^2|}\\
& = -\frac{|N_{G}(J)|\times |N_{G}(S_{J})|}{S_{J}^{2}}\in R^{\times}.
\end{align*}
This determinant is invertible in $R$, so the bilinear form $b_{\phi_{G}}$ is non degenerate. 
\end{proof}
\begin{coro}
Let $G$ be a finite group. Then the Mackey algebra $\mu_{R}(G)$ is a symmetric algebra if and only if $p^2\nmid |G|$.
\end{coro}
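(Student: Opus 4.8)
The plan is to deduce the corollary from Theorem \ref{main}, which identifies symmetry of $\mu_R(G)$ with the property that $\big(RB(H)\big)_{H\leqslant G}$ is a stable by induction family of symmetric algebras. Both implications then become statements about the concrete family $\phi=(\phi_H)_{H\leqslant G}$ with $\phi_H\big(H/I f_J^H\big)=\delta_{I,J}$ in the basis of Lemma \ref{basis1}, all of which have essentially been proved in this section.

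For the implication ``$p^2\nmid|G|$ implies $\mu_R(G)$ symmetric'', I would argue as follows. Every subgroup $H\leqslant G$ satisfies $p^2\nmid|H|$, so --- recalling that $p\mid|G|$ is the standing hypothesis of the section --- Lemma \ref{mod1} shows $\phi$ is stable by induction, and Proposition \ref{mod2} shows $b_{\phi_H}$ is non-degenerate on $RB(H)$ whenever $p\mid|H|$. The remaining case $p\nmid|H|$ I would handle directly: then every subgroup of $H$ is $p$-perfect, the basis of Lemma \ref{basis1} is just the set of transitive $H$-sets $H/J$, and the matrix of $b_{\phi_H}$ in that basis is diagonal with entries $|N_H(J)|/|J|$ (this is the size-one-block case of the computation in Proposition \ref{mod2}); these entries are units of $R$ since their prime divisors divide $|G|$ and differ from $p$. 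Hence $\big(RB(H)\big)_{H\leqslant G}$ is a stable by induction family of symmetric algebras, and Theorem \ref{main} finishes this direction.

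For the converse, suppose $p^2\mid|G|$. A Sylow $p$-subgroup of $G$ then has order $p^a$ with $a\geqslant 2$, hence contains a subgroup of order $p^2$; pick such a subgroup $P\leqslant G$, necessarily isomorphic to $C_{p^2}$ or to $C_p\times C_p$. If $\mu_R(G)$ were symmetric, Theorem \ref{main} would provide a stable by induction family $\psi=(\psi_H)$ with every $b_{\psi_H}$ non-degenerate; in particular $b_{\psi_P}(X,Y)=\psi_P(XY)$ would be a non-degenerate associative bilinear form on $RB(P)$, symmetric because $RB(P)$ is commutative. But the two explicit matrix computations in the converse part of the proof of Theorem \ref{sym} apply verbatim over $R$ and show that, in the basis of transitive $P$-sets, the matrix of \emph{any} symmetric associative bilinear form on $RB(P)$ has determinant in $pR$; since $p$ is not invertible in $R$, this determinant is not a unit, contradicting non-degeneracy. (Alternatively one quotes the remark following Theorem \ref{sym}.) Hence $\mu_R(G)$ is not symmetric.

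The hard part will be the bookkeeping in the first implication: one must exhibit a single family $\phi$ that is simultaneously stable by induction and makes every $b_{\phi_H}$ non-degenerate, which forces the split into the cases $p\mid|H|$ and $p\nmid|H|$ and a check that Proposition \ref{mod2} together with the size-one-block computation really covers all subgroups. The converse is routine given Theorem \ref{sym}; its only genuine input is that $p$ is not a unit of $R$, which holds for all the rings named in the statement --- a field of characteristic $p$, the valuation ring of a $p$-modular system, and $\mathbb{Z}_{(p)}$ --- and fails, together with the equivalence, when it is not assumed.
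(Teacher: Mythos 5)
Your proof is correct and follows essentially the same route as the paper: the forward direction combines Lemma \ref{mod1}, Proposition \ref{mod2} and Theorem \ref{meta}, and the converse reuses the degeneracy computation for the two groups of order $p^2$ from the proof of Theorem \ref{sym} together with Theorem \ref{main}. You are in fact a little more careful than the paper in two places where it is terse: you explicitly handle subgroups $H$ with $p\nmid|H|$, to which Proposition \ref{mod2} does not literally apply (though note the basis of Lemma \ref{basis1} there is $\{H/Jf_{J}^{H}\}$, i.e.\ the rescaled idempotents $\frac{|N_{H}(J)|}{|J|}e_{J}^{H}$, rather than the transitive $H$-sets themselves --- in that basis the matrix is indeed diagonal with unit entries $|N_{H}(J)|/|J|$), and you make explicit that the converse needs $p\notin R^{\times}$, which holds for all the rings the section has in mind.
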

\begin{proof}
If $p^2\nmid |G|$, the fact that $\mu_{R}(G)$ is a symmetric algebra follows from Theorem \ref{meta}, Proposition \ref{mod2} and Lemma \ref{mod1}. If $p^2\mid |G|$, we saw in the proof of Theorem \ref{sym} that every associative bilinear form on $RB(P)$ is degenerate if $|P|=p^2$, so the Mackey algebra $\mu_{R}(G)$ is not a symmetric algebra. 
\end{proof}
\paragraph{Acknowledgements}
The author would like to thank the foundation FEDER and the CNRS for their financial support and the foundation ECOS and CONACYT for the financial support in the project M10M01. Thanks also go to Serge Bouc for his suggestions. 
\bibliographystyle{abbrv}

\vspace{5pt}
\par\noindent
{Baptiste Rognerud\\
EPFL / SB / MATHGEOM / CTG\\
Station 8\\
CH-1015 Lausanne\\
Switzerland\\
e-mail: baptiste.rognerud@epfl.ch}
\end{document}